\newcommand*{\E}{\mathbb{E}}
\newtheorem{thm}{Theorem}[section]
\newtheorem{theorem}[thm]{Theorem}
\newtheorem{lemma}[thm]{Lemma}
\newtheorem{proposition}[thm]{Proposition}
\newtheorem{corollary}[thm]{Corollary}
\newtheorem{assumption}[thm]{Assumption}
\newtheorem{remark}[thm]{Remark}
\newcommand{\beq}{\begin{equation}}
	\newcommand{\eeq}{\end{equation}}
\newcommand{\beqa}{\begin{eqnarray}}
	\newcommand{\eeqa}{\end{eqnarray}}
\newcommand{\beqas}{\begin{eqnarray*}}
	\newcommand{\eeqas}{\end{eqnarray*}}
\newcommand{\bi}{\begin{itemize}}
	\newcommand{\ei}{\end{itemize}}
\newcommand{\bas}{\begin{align*}}
	\newcommand{\eas}{\end{align*}}
\newcommand{\vgap}{\vspace{.1in}}
\newcommand{\lessgap}{\vspace{-.1in}}
\newcommand{\nn}{\nonumber}
\newcommand{\R}{\mathbb{R}}
\newcommand{\lam}{{\lambda}}
\newcommand{\inner}[2]{\langle #1,#2\rangle}
\newcommand{\argmin}{\mathrm{argmin}\,}
\newcommand{\dom}{\mathrm{dom}\,}
\newcommand{\vg}[2]{{\color{red} #1}{\color{blue}\  #2}}
\begin{document}
	\title{A single cut proximal bundle method for \\stochastic convex composite optimization}
	\date{July 18, 2022 \\ 
		1st revision: November 3, 2022 \\
		2nd revision: April 30, 2023 \\ 3rd revision: October 20, 2023}
	\author{
		Jiaming Liang \thanks{Goergen Institute for Data Science and Department of Computer Science, University of Rochester, Rochester, NY 14620 (email: {\tt jiaming.liang@rochester.edu}).}\qquad
		Vincent Guigues \thanks{School of Applied Mathematics FGV/EMAp, 22250-900 Rio de Janeiro, Brazil. (email: {\tt vincent.guigues@fgv.br}).} \qquad
		Renato D.C. Monteiro \thanks{School of Industrial and Systems
			Engineering, Georgia Institute of
			Technology, Atlanta, GA 30332.
			(email: {\tt renato.monteiro@isye.gatech.edu}). This work
			was partially supported by AFOSR Grant FA9550-22-1-0088.}
	}
	\maketitle
	
	\begin{abstract}
		This paper considers
		optimization problems
		where the objective is
		the sum of a function given by an expectation and a closed convex function,
		and proposes
		stochastic composite proximal
		bundle (SCPB) methods 
		for solving it.
		Complexity guarantees are established for them without requiring knowledge of parameters associated with the problem
		instance. Moreover, it is shown that they have optimal complexity when
		these problem parameters are known.
		To the best of our knowledge, this is the
		first proximal bundle method for stochastic
		programming able to deal with
		continuous distributions.
		Finally, we present 
		computational results showing that SCPB substantially
		outperforms the robust stochastic approximation (RSA) method in all instances considered.\\

		
		\par {\bf Keywords.} stochastic convex composite optimization, stochastic approximation, proximal bundle method, optimal complexity bound.
		\\
		
		{\bf AMS subject classifications.} 
		49M37, 65K05, 68Q25, 90C25, 90C30, 90C60.
	\end{abstract}
	
	\section{Introduction}\label{sec:intro}
	
	The main goal of this paper is to propose and study the complexity of some stochastic composite proximal bundle (SCPB) variants to solve the
	stochastic	
	convex composite optimization (SCCO) problem
	\begin{equation}\label{eq:ProbIntro}
		\phi_{*}:=\min \left\{\phi(x):=f(x)+h(x): x \in \R^n\right\}
	\end{equation}
	where 
	\begin{equation}\label{pbint2}
		f(x)=\mathbb{E}_{\xi}[F(x,\xi)].
	\end{equation}
	
	
	
	
	
	
	We assume the following conditions hold:
	i) $f, h: \R^{n} \rightarrow \R\cup \{ +\infty \} $ are proper closed convex functions
	such that
	$ \dom h \subseteq \dom f $;
	ii) a stochastic first-order oracle, which
	for every $x \in \dom h$ and almost every random vector $\xi$ returns
	$s(x,\xi)$ such that $\mathbb{E}[s(x,\xi)] \in \partial f(x)$, is available;
	and iii) for every $x \in \dom h$, $\E[\|s(x,\xi)\|^2] \le \bar M^2$ for some $\bar M\in \R_+$.
	
	{\bf Literature Review.}
	Proximal bundle methods for solving the deterministic version of \eqref{eq:ProbIntro},
	i.e., where an oracle that outputs $f(x)$
	for any $x$ is available, have been
	proposed in \cite{lemarechal1975extension,lemarechal1978nonsmooth,mifflin1982modification,wolfe1975method}.
	Moreover, convergence (but not complexity) analyses of proximal bundle methods have been developed for example in \cite{frangioni2002generalized,de2014convex,ruszczynski2011nonlinear,van2017probabilistic},
	and their 
	iteration complexities have been derived for example in \cite{astorino2013nonmonotone,diaz2021optimal,du2017rate,kiwiel2000efficiency,liang2020proximal,liang2021unified}.
	
	We now discuss methods for solving (the stochastic version of) problem \eqref{eq:ProbIntro}.
	Methods for solving
	\eqref{eq:ProbIntro} when $f$ can
	be computed exactly (e.g., $\xi$ is a discrete random vector with small support)
	have been discussed for example in \cite{birge-louv-book,birgelouv} 
	and are  usually based on solving a deterministic (but large-scale) reformulation of \eqref{eq:ProbIntro}, 
	using decomposition (such as the L-shaped method
	\cite{vwets}) possibly combined with regularization as in
	\cite{guilejtekregsddp,hise96}.
	
	Solution methods  for problem \eqref{eq:ProbIntro}
	in which $\xi$ has a continuous distribution
	are basically based on one of the following 
	three ideas:
	i) a single (usually expensive) approximation of \eqref{eq:ProbIntro}
	where $f$ is approximated by a Monte Carlo average
	$H_N(x) :=\sum_{i=1}^N F(\cdot,\xi_i)/N$ for a large i.i.d.\ sample  $(\xi_1,\ldots,\xi_N)$ of $\xi$ is constructed
	at the beginning of the method and is then
	solved to yield an approximate solution of
	\eqref{eq:ProbIntro} (SAA-type methods);
	see for instance \cite{ioudnemgui15,kingrock1993,kleywegt2002sample,shap1991,verweij2003sample} and also Chapter 5 of \cite{shadenrbook} for their complexity analysis;
	ii) simple approximations
	of \eqref{eq:ProbIntro} are constructed
	at every iteration based on
	a small (usually a single)
	sample and their solutions are
	used to obtain 
	an approximation solution of
	\eqref{eq:ProbIntro} (SA-type methods);
	SA-type methods have been originally proposed
	in \cite{monroe51} and further extended  in
	\cite{guiguesmstep17,guiguesinexactsmd,nemjudlannem09,nemyud78,nesterov2009primal,polyak90,polyakjud92};
	and 
	iii) hybrid type methods which sit in between SAA and SA-type ones in that they use
	partial Monte Carlo averages $H_k(\cdot)$ (and their expensive subgradients) for increasing 
	iteration indices $k$  \cite{hise96}.

	\par {\textbf{Contributions.}} Although the cutting plane methodology can be used
	in the context of SAA methods to solve  single  approximations of \eqref{eq:ProbIntro} generated at their outset, such methodology has not been used in the
	context of SA-type methods.
	This paper partially addresses this issue
	by developing regularized aggregated cutting plane methods for solving \eqref{eq:ProbIntro} where some of the most recent (linear approximation) cuts (in expectation) are combined,
	i.e., a suitable convex combination of them
	is chosen,
	so that
	a single aggregated cut (in expectation) is obtained.
	Two SCPB variants
	based on the aforementioned aggregated one-cut
	scheme are
	proposed which can be viewed as
	natural extensions of the one-cut variant
	developed in \cite{liang2021unified} (based on the analysis of \cite{liang2020proximal}) for solving the
	deterministic version of \eqref{eq:ProbIntro}.
	More specifically, 
	at every iteration,
	these SCPB variants solve the prox bundle subproblem
	\begin{equation}\label{eq:x-pre}
		x = \underset{u\in \R^n}\argmin \left \{\Gamma (u) + \frac{1}{2\lam} \|u-x^c\|^2 \right\}
	\end{equation}
	where $\lam>0$ is the prox stepsize, $x^c$ is the current prox-center,
	and
	$\Gamma$ is the current
	bundle function in expectation, i.e., it satisfies
	$\E[\Gamma(\cdot)] \le \phi(\cdot)$.
	The prox-center remains the same for several consecutive iterations which are referred to as a cycle.
	In the beginning of a cycle, the prox-center is
	updated to $x^c \leftarrow x $ and
	the bundle function $\Gamma$
	is chosen to be the composite linear approximation
	$F(x,\xi)+\inner{s(x,\xi)}{\cdot-x} + h(\cdot)$
	of the function $F(\cdot,\xi) + h(\cdot)$ at $x$ for some new
	independent sample
	$\xi$.
	For other iterations of the cycle,
	the prox-center remains the same
	but $\Gamma$ is set to be a convex combination of the previous bundle function and the most recent composite linear approximation as constructed above.
	It is then shown that
	both SCPB variants obtain a stochastic iterate $y \in \R^n$ (determined by some of the above generated $x$'s) such that
	$
	\E[\phi(y)] - \phi_* \le  \varepsilon$,
	where $\phi_*$ is as in \eqref{eq:ProbIntro}, in
	${\cal O}(\varepsilon^{-2})$ iterations/resolvent evaluations.
	To our knowledge, these are the
	first SA-type SCPB  methods for solving SCCO problems where $\xi$ can have either a discrete or continuous distribution.
	Finally, it is shown that
	the robust stochastic approximation (RSA) method of \cite{nemjudlannem09} is a special case of SCPB with
	a  relatively small prox stepsize.

	\if{
		
		{\bf Vincent: please on the part below including the paper organization} An important class of problems of the form \eqref{eq:ProbIntro}-\eqref{pbint2}
		is the class of two-stage stochastic convex programs
		(see for instance \cite{shadenrbook, guiguessiopt2016, guigues2016isddp, lecphilgirar12}) given by
		\begin{equation}\label{def2stageconv1}
			\left\{ 
			\begin{array}{l}
				\min \;f_1(x_1)+
				\mathbb{E}[\mathfrak{Q}(x_1,\xi)]\\
				x_1 \in X_1
			\end{array}
			\right.
		\end{equation}    
		where: i) $X_1$ is convex, $f_1: X_1 \rightarrow \mathbb{R}$ is convex,
		and 
		\begin{equation}\label{def2stageconv2}
			\mathfrak{Q}(x_1,\xi) =\left\{ 
			\begin{array}{l}
				\min_{x_2} \;f_2(x_1,x_2,\xi)\\
				A_2 x_2 + B_2 x_1 = b_2,\\
				g_2(x_1,x_2,\xi) \leq 0,\\
				x_2 \in X_2;
			\end{array}
			\right.
		\end{equation}
		and ii)
		$X_2$ is convex, and
		$f_2(x_1,\cdot,\xi)$
		and all the components of $g_2(x_1,\cdot,\xi)$
		are convex
		for almost every $\xi$ and all $x_1 \in X_1$.
		In this problem, $\xi$ contains in particular
		the random components in $A_2$, $B_2$ and  $b_2$.
		Clearly, problem \eqref{def2stageconv1}-\eqref{def2stageconv2}
		is a special case of \eqref{eq:ProbIntro}-\eqref{pbint2}
		in which $h$ is the indicator function
		$\delta_{X_1}(\cdot)$
		of set $X_1$, $x=x_1$, and
		$F(x,\xi)=f_1(x)+\mathfrak{Q}(x,\xi)$.
		
	}\fi
	
	{\bf Organization of the paper.} Subsection~\ref{subsec:DefNot}  presents basic definitions and notation used throughout the paper.
	Section~\ref{SCPBf} formally describes the assumptions on the SCCO problem \eqref{eq:ProbIntro},
	presents the SCPB scheme, and two cycles rules for determining the length of the cycles in SCPB.
	Section~\ref{sec:SCPB1} presents various convergence rate bounds for the SCPB variant based on the first cycle rule and discusses the relationship between SCPB and RSA.
	Section~\ref{sec:SCPB2} provides convergence rate bounds for the SCPB variant based on the second cycle rule.
	Section~\ref{sec:proofth2} collects proofs of the main results in Sections~\ref{sec:SCPB1} and \ref{sec:SCPB2}.
	Section~\ref{sec:num} reports the numerical experiments.
	Finally, Section~\ref{sec:remarks} presents some concluding remarks and possible extensions.


	\if{
		----------------------
		
		{\bf L-shaped:}
		We want to solve
		$$
		\begin{array}{l}
			\min_{x_1} c_1^T x_1 + \mathcal{Q}(x_1) \\
			A_1 x_1 + B_1 x_0 = b_1
		\end{array}
		$$
		where
		$$
		\mathcal{Q}(x_1)
		=\mathbb{E}[\mathfrak{Q}(x_1,\xi)]
		= \sum_{i=1}^N p_i \mathfrak{Q}(x_1,\xi_i)
		$$
		and
		$$
		\begin{array}{l}
			\mathfrak{Q}(x_1,\xi)=
			\left\{ 
			\begin{array}{l}
				\min_{x_2} c_2^T x_2 \\
				A_2 x_2 + B_2 x_1 = b_2
			\end{array}
			\right.
		\end{array}
		=
		\left\{ 
		\begin{array}{l}
			\max_{\pi}  \pi^T(b_2-B_2 x_1) \\
			\pi \in D
		\end{array}
		\right.
		$$
		$\xi=(c_2,A_2,B_2,b_2)$.
		
		Start from $x^0$.
		At iteration $k+1$, compute a linearization
		$\ell_k$
		of $\mathcal{Q}$ at $x^k$
		and compute $x^{k+1}$ solution
		of 
		$$
		\begin{array}{l}
			\min_{x_1} c_1^T x_1 + \max_{i \leq k} \ell_i(x_1) \\
			A_1 x_1 + B_1 x_0 = b_1.
		\end{array}
		$$
		The linearization $\ell_k$ is given
		by
		$$
		\sum_{i=1}^N p_i \pi_{k, i}^T(b_{2.i}-B_{2,i}x_1).
		$$
		
		-------------------------------
		
		Many solution methods have been proposed
		to solve these problems
		when
		the distribution of $\xi$ is discrete
		such as the L-shaped method  \cite{birge-louv-book}
		or multicut decomposition methods \cite{birgelouv}.
		Typically problems having a finite number of
		second stage scenarios are obtained by applying
		the Sample Average Approximation (SAA)
		scheme to the original
		two-stage continuous problem (see e.g., \
		\cite{shadenrbook, verweij2003sample, kleywegt2002sample, shap1991, kingrock1993, ioudnemgui15}).
		For two-stage stochastic problems with continuous distributions, efficient solution methods
		include the Stochastic
		Approximation (SA) \cite{monroe51}, the Robust Stochastic Approximation (RSA) \cite{polyak90}, \cite{polyakjud92}, Stochastic 
		Mirror Descent (SMD) \cite{nemjudlannem09},
		Multistep Stochastic Mirror Descent \cite{guiguesmstep17}, and
		Inexact Stochastic Mirror Descent (ISMD) \cite{guiguesinexactsmd}.
		An extension of the cutting plane method
		for two-stage stochastic linear programs with
		continuous distributions, called
		Stochastic Decomposition (SD), was proposed in \cite{hise96}.
		However, to the best of our knowledge, no bundle type regularization 
		of such cutting plane (referred to here as proximal bundle) methods has so far  been proposed
		for two-stage stochastic programs with
		continuous distributions. In this paper,
		we propose and analyze the complexity of
		a family of proximal bundle methods for a problem of form
		\eqref{eq:ProbIntro}-\eqref{pbint2} that apply in particular to two-stage stochastic convex
		programs.

		

		
		{\bf Organization of the paper.}

		?????????????????????????????????????????
		
	}\fi
	
	\subsection{Basic definitions and notation} \label{subsec:DefNot}
	
	Let $\mathbb{N}_{++}$ denote the set of positive integers.
	The sets of real numbers, non-negative and positive real numbers are denoted by $\R$, $\R_+ $ and $\R_{++}$, respectively. 
	Let $\R^n$ denote the standard $n$-dimensional Euclidean 
	space equipped with  inner product and norm denoted by $\left\langle \cdot,\cdot\right\rangle $
	and $\|\cdot\|$, respectively.

	Let $\psi: \R^n\rightarrow (-\infty,+\infty]$ be given. The effective domain of $\psi$ is denoted by
	$\dom \psi:=\{x \in \R^n: \psi (x) <\infty\}$ and $\psi$ is proper if $\dom \psi \ne \emptyset$.
	For $\varepsilon \ge 0$, the \emph{$\varepsilon$-subdifferential} of $ \psi $ at $z \in \dom \psi$ is denoted by
	$\partial_\varepsilon \psi (z):=\left\{ s \in\R^n: \psi(u)\geq \psi(z)+\left\langle s,u-z\right\rangle -\varepsilon, \forall u\in\R^n\right\}$.
	The subdifferential of $\psi$ at $z \in \dom \psi$, denoted by $\partial \psi (z)$, is by definition the set  $\partial_0 \psi(z)$.
	Moreover, a proper function $\psi: \R^n\rightarrow (-\infty,+\infty]$ is $\mu$-strongly convex for some $\mu \ge 0$ if
	$$
	\psi(\alpha z+(1-\alpha) u)\leq \alpha \psi(z)+(1-\alpha)\psi(u) - \frac{\alpha(1-\alpha) \mu}{2}\|z-u\|^2
	$$
	for every $z, u \in \dom \psi$ and $\alpha \in [0,1]$. Note that we say $\psi$ is convex when $\mu=0$.
	We use the notation
	$\xi_{[t]}=(\xi_0,\xi_1,\ldots,\xi_t)$ for the history of
	the sampled observations
	of $\xi$ up to iteration $t$.
	Define $\ln_0^+(\cdot) := \max\{0,\ln(\cdot)\}$.
	Define the diameter of a set $X$ to be $D_X  :=  \sup \{  \| x- x' \| : x, x' \in \dom X \}$.

	\section{Assumptions and two SCPB variants}\label{SCPBf}
	
	This section presents the assumptions made on problem \eqref{eq:ProbIntro} and
	states two SCPB variants for solving it.

	\subsection{Assumptions}
	
	Let $\Xi$ denote the
	support of random 
	vector $\xi$ and assume that the following conditions on \eqref{eq:ProbIntro} are assumed to hold:
	\begin{assumption}\label{assump:basic}
		\begin{itemize}
			\item[(A1)]
			$f$ and $ h$ are proper closed convex functions satisfying
			$\dom f \supset \dom h$;		
			\item[(A2)] for almost every $\xi \in \Xi$,
			a functional oracle $F(\cdot,\xi) :\dom h \to \R$ and
			a stochastic subgradient
			oracle $s(\cdot,\xi):\dom h \to \R^n$ satisfying
			\[
			f(x) = \E[F(x,\xi)], \quad f'(x) := \E[s(x,\xi)] \in \partial f(x)
			\]
			for every $x \in \dom h$ are available;
			\item[(A3)]
			$\bar M := \sup \{ \E[\|s(x,\xi)\|^2]^{1/2} : x \in \dom h \} < \infty$;
			\item[(A4)]
			the set of optimal solutions $X^*$ of
			\eqref{eq:ProbIntro}-\eqref{pbint2}	is nonempty.
		\end{itemize}
	\end{assumption}

	
	We now make some observations about the above conditions.
	First, as in \cite{nemjudlannem09},
	condition (A2) does not require $F(\cdot,\xi)$ to be
	convex.
	Second,
	condition (A3) implies that
	\begin{equation}\label{ineq:fp}
		\|f'(x)\| = \|\E[s(x,\xi)]\| \le \E[\|s(x,\xi)\|] \le \left(\E[\|s(x,\xi)\|^2]\right)^{1/2} \le \bar M \quad \forall x \in \dom h.
	\end{equation}
	Third, defining for every $\xi \in \Xi$ and $x \in \dom h$,
	\begin{equation}\label{def:Phi}
		\Phi(\cdot,\xi)=F(\cdot,\xi)+h(\cdot), \quad 
		\ell(\cdot;x,\xi)= f(x)+\inner{s(x,\xi)}{\cdot-x} + h(\cdot), 
	\end{equation}
	it follows from (A2), the second identity in \eqref{def:Phi},
	and the convexity of $f$ by (A1), that
	\begin{equation}\label{eq:exp0}
		\E[\Phi(\cdot,\xi)] = \phi(\cdot)\ge f(x) + \inner{f'(x)}{\cdot-x} + h(\cdot) =
		\E[\ell(\cdot;x,\xi)]
	\end{equation}
	where $\phi(\cdot)$ is as in \eqref{eq:ProbIntro}.
	Hence, $\ell(\cdot;x,\xi)$ is
	a stochastic composite linear approximation of
	$\phi(\cdot)$
	in the sense that its expectation is a true composite linear approximation of $\phi(\cdot)$. (The terminology ``composite" refers to the function $h$ which is included in the approximation $\ell(\cdot;x,\xi)$ as is.)
	
	\if{Third,
		for the special case
		of \eqref{def2stageconv1}-\eqref{def2stageconv2},
		we refer to Lemma 2.1 of \cite{guiguessiopt2016} for a
		characterization of
		$\partial F(x,\xi)$ and how to compute
		a stochastic subgradient $s(x,\xi) \in 	\partial F(x,\xi)$.
	}\fi

	\subsection{Description of the two SCPB variants}

	Before describing the two SCPB variants, we motivate them by interpreting them as
	inexact implementations of the (theoretical) proximal point method for solving \eqref{eq:ProbIntro}.
	
	Their $k$-th cycle of iterations performs a finite number of iterations
	to solve the prox subproblem
	\[
	\underset{u\in \R^n}\min \left \{\phi (u) + \frac{1}{2\lam} \|u-\hat x_{k-1} \|^2 \right\}
	\]
	where $\hat x_{k-1}$ denotes the prox-center
	during the cycle. Each iteration within the cycle solves a subproblem of the form
	\begin{equation}\label{eq:update}
		x=\underset{u\in \R^n}\argmin \left \{{\cal A} (u) + h(u) + \frac{1}{2\lam} \|u-\hat x_{k-1} \|^2 \right\}
	\end{equation}
	where ${\cal A}(\cdot)$ is an 
	affine bundle for
	$f$ in expectation,
	i.e., an affine function
	such that
	$\E[{\cal A}(\cdot)]\le f(\cdot)$. (This type of bundle has been considered in
	the inexact proximal point approach considered in \cite{liang2021unified}
	where it is referred to
	as a one-cut bundle
	for $f$.) The bundle
	${\cal A}^+$ for the next subproblem in the cycle is then taken to be a linear combination of the current bundle ${\cal A}$ and a newly generated 
	stochastic linear
	approximation of $f$ of
	the form
	$F(x,\xi)+\inner{s(x,\xi)}{\cdot-x}$. Moreover,
	the first iteration of
	every cycle starts by
	setting the prox-center to the most recently generated $x$ as in \eqref{eq:update} and the bundle to the most recently generated 
	stochastic linear
	approximation of $f$ at $x$.

	Both  SCPB variants are based on the SCPB scheme described below.
	As stated below, the scheme is not a completely specified algorithm since its step 2
	does not
	describe how to select the index $j_k$. Two rules for doing so, and hence the complete description of the two aforementioned SCPB variants, are then given 
	following the statement of the scheme.

	At every iteration $j \geq 1$, the
	SCPB scheme samples
	an independent realization $\xi_{j-1}$ of
	$\xi$.
	
	\noindent\rule[0.5ex]{1\columnwidth}{1pt}
	
	SCPB
	
	\noindent\rule[0.5ex]{1\columnwidth}{1pt}
	{\bf Input:} Scalars $\lam,\theta>0$, integer $K \ge 1$,
	and initial point $ x_0\in \dom h $.
	\begin{itemize}
		\item [0.] 
		Set 
		$j=k=1$, $j_0=0$, and 
		\begin{equation}\label{def:tau}
			\tau = \frac{\theta K}{\theta K + 1};
		\end{equation}
		\item [1.] take 
		a sample $\xi_{j-1}$ of r.v.\ $\xi$ independent from the previous samples $\xi_0,\ldots,\xi_{j-2}$
		and compute
		\lessgap
		\begin{align} \label{def:xc}
			x_j^c = \left\{\begin{array}{ll}
				x_{j_{k-1}}, & \text { if } j=j_{k-1}+1 , \\ 
				x_{j-1}^c,  & \text { otherwise},
			\end{array}\right.
		\end{align}
		\lessgap
		\lessgap
		\begin{align} \label{eq:Sj}
			S_j =  \left\{\begin{array}{ll}
				s(x_{j_{k-1}},\xi_{j_{k-1}}), & \text { if } j=j_{k-1}+1 , \\ 
				(1-\tau)
				s(x_{j-1},\xi_{j-1}) +  \tau S_{j-1} ,  & \text { otherwise},
			\end{array}\right.
		\end{align}	
		\lessgap
		\lessgap
		\begin{align}
			x_{j} =\underset{u\in \R^n}\argmin
			\left\lbrace  
			h(u) + \inner{S_j}{u} +\frac{1}{2\lam}\|u- x_{j}^c \|^2 \right\rbrace,  \label{def:xj} 
		\end{align}
		and
		\lessgap
		\lessgap
		\begin{align} \label{def:yj}
			y_j =  \left\{\begin{array}{ll}
				x_{j}, & \text { if } j=j_{k-1}+1 , \\ 
				(1-\tau) x_j + \tau y_{j-1},  & \text { otherwise};
			\end{array}\right.
		\end{align}		 
		\lessgap
		\item [2.]
		if $j=j_{k-1}+1$, then 
		choose an integer $j_k$ such that
		\[
		j_k \ge j_{k-1} + 1;
		\]
		if $j < j_k$, then
		set $j \leftarrow j+1$
		and
		go to step 1; else,
		set
		$ \hat y_k= y_{j_k} $,
		and
		go to step~3;
		\item[3.]
		if $k<K$, then set
		$k \leftarrow k+1$ and
		$j \leftarrow j+1$, and go to
		step 1; otherwise, compute 
		\begin{equation}\label{eq:output}
			\hat y_K^a = \frac1{\lceil K/2 \rceil}\sum_{k=\lfloor K/2 \rfloor + 1}^K \hat y_k
		\end{equation} and \textbf{stop}.
	\end{itemize}
	{\bf Output:} $\hat y_K^a$.
	
	\noindent
	\rule[0.5ex]{1\columnwidth}{1pt}

	We first discuss the roles played by the
	two index counts $j$ and $k$ used
	by SCPB.
	First, $j$ counts the total number of iterations/resolvent evaluations performed by SCPB since it is
	increased by one every
	time SCPB returns to step 1.
	Second, defining the $k$-th cycle
	as the iteration indices $j$ lying in
	\begin{equation}\label{def:Ck}
		{\cal C}_k := \{ i_{k}, \ldots, j_k\}, \quad \mbox{\rm where} \ \ \  i_k := j_{k-1}+1,
	\end{equation}
	it immediately follows that 
	$k$ counts the number of cycles
	generated by SCPB.
	Third, step 1 determines two types of
	iterations depending on whether $j=j_k$ (serious iteration)
	or $j\in {\cal C}_k\setminus\{j_k\}$ (null iteration). Hence, the last iteration of a cycle is a serious one while the others are null ones.

	We now make several basic remarks about SCPB.
	First, every execution of step 1 involves one resolvent evaluation of $\partial h$, i.e.,
	an evaluation of the point-to-point operator $(I+ \alpha \partial h)^{-1}(\cdot)$ for some $\alpha>0$.
	Second, SCPB generates three sequences of iterates,
	namely, the sequence of prox-centers
	$\{x_j^c\}$ computed in \eqref{def:xc}, the
	sequence $\{x_j\}$ determined by \eqref{def:xj},
	and the sequence $\{y_j\}$ given by \eqref{def:yj}.
	Third, it follows from \eqref{def:xc} that
	$x^c_j=x_{j_{k-1}}$ 
	for every $j \in {\cal C}_k$. Hence,
	the prox-center $x^c_j$ remains constant between consecutive iterations within 
	a cycle and
	(possibly) changes
	only
	at the
	beginning of the first iteration of the following cycle.
	Fourth, $\{\hat y_k\}$ is the subsequence
	of $\{y_j\}$ consisting of all the last cycle iterates $y_{j_k}$ generated
	by SCPB.
	Fifth, the convergence rates for the two specific variants of the SCPB scheme described below
	are with respect to the average of the iterates
	$\hat y_{\lfloor K/2 \rfloor +1}, \ldots,\hat y_K$, namely, the point
	$\hat y_K^a$ as in \eqref{eq:output}
	(see
	Theorems \ref{thm:main1} and \ref{thm:main2} below).
	
	As already mentioned in the second paragraph preceding the description of SCPB,
	the scheme is not completely specified since its step 2
	does not
	describe how to select $j_k$. We now describe two cycle rules for doing so which depend
	on a pre-specified parameter $R>0$, namely:

	

	\begin{itemize}
		\item[(B1)] for every $k \ge 1$, let $j_k$ be the smallest integer $\ge i_k$ such that
		$\lam k \tau^{j_k-i_k}\le R$;
		\item[(B2)] for every $k \ge 1$, let $j_k$ be the smallest integer $\ge i_k+1$ such that
		\begin{equation}\label{ineq:example}
			\lam k \tau^{j_k-i_k}\left(
			F(x_{i_k},\xi_{i_k})    
			-  \tilde \ell_k (x_{i_k}) - \frac{1}{2\lam}\|x_{i_k}- x_{i_k}^c \|^2 \right) \le R
		\end{equation}
		where  $i_k$ is as in \eqref{def:Ck} and
		\begin{equation}\label{def:tl}
			\tilde \ell_k (\cdot) := F(x_{i_{k}-1},\xi_{i_{k}-1}) + 
			\langle s(x_{i_{k}-1},\xi_{i_{k}-1}),\cdot-x_{i_{k}-1}\rangle.
		\end{equation}

	\end{itemize}
	
	We make the following remarks about cycle rules (B1) and (B2).
	First, the sequence $\{j_k\}$ determined by the cycle rule (B1) is deterministic, while the one determined by (B2) is stochastic since the sequence $\{x_{i_k}\}$ used in \eqref{ineq:example} is stochastic.
	Second, another difference between the two cycle rules is that (B1) allows $j_k=i_k$, while $j_k$ in (B2) is at least $i_k+1$. 
	In other words, the cycle length for (B1) may be equal to one, but the one for
	(B2) is
	at least two.
	Third, the length of cycle ${\cal C}_k$ for both rules above depends on the
	cycle index $k$. Hence,
	even though (B1) is deterministic, the length of the cycles generated by it changes with $k$.
	
	Throughout our presentation,
	SCPB based on cycle rule (B1) (resp., (B2)) is referred to as SCPB1 (resp., SCPB2).

	\section{Complexity results for SCPB1}\label{sec:SCPB1}
	
	This section presents the main complexity results for SCPB1 under various assumptions and discusses the relationship between SCPB1 and RSA.
	
	
	\subsection{Convergence rate bounds of SCPB1 with bounded $\dom h$}
	
	The following result states a general convergence rate result for SCPB1 that holds
	for
	bounded $\dom h$ and
	for any choice of
	input $(\lam,\theta,K)$ in SCPB1 and constant $R$ as in (B1).
	The proof is postponed to Subsection \ref{prooffirstth}.

	\begin{theorem}\label{thm:main1}
		Assume that Assumption \ref{assump:basic} holds and $\dom h$ has a finite diameter $D_h \ge 0$. 
		Then, for any given $(\lam,\theta,K) \in \R_{++}^2\times \mathbb{N}_{++}$ and $R>0$,
		SCPB1 with any input $(\lam,\theta,K)$ and constant $R$ in (B1)
		satisfies the following statements:
		\begin{itemize}
			\item[a)] the number of iterations within the $k$-th cycle ${\cal C}_k$ (see \eqref{def:Ck}) is bounded by 
			\begin{equation}\label{inner}
				\left\lceil (\theta K+1)
				\ln_0^+ \left(\frac{\lambda k}{R} \right)\right\rceil + 1;
			\end{equation}
			\item[b)] we have
			\begin{equation}\label{ineq:consequence}
				\E[\phi(\hat y_K^a)] - \phi_* \le  \frac1K \left(\frac{ D_h^2}{\lam} + \frac{6 R \min\{\lam \bar M^2, \bar M D_h\}}{\lam} +  \frac{2\lam \bar M^2}{\theta}\right)
			\end{equation}
			where 
			$D_h$ is the diameter of $\dom h$.
		\end{itemize}
	\end{theorem}
	
	We now make some remarks about Theorem \ref{thm:main1}.
	First, its overall iteration complexity is
	given by $K$, which is its outer iteration complexity, times its inner iteration complexity given in \eqref{inner}. Second, \eqref{ineq:consequence} gives a
	bound on the expected primal gap $\E[\phi(\hat y_K^a)] - \phi_*$ in terms of $K$,
	and hence provides a sufficient condition on
	how large $K$ should be chosen for SCPB1
	to generate a desired approximate solution.

	Even though Theorem \ref{thm:main1} holds for the general case in which $\dom h$ is unbounded, all its corollaries stated in this subsection and Subsection \ref{subsec:practical} assume that
	$\dom h$ is bounded.
	For any given $(\lam,K)$,
	the following result describes a convergence rate bound for SCPB1
	with a specific choice of $(\theta,R)$. 
	
	\begin{corollary} \label{cor:cmplx1}
		Assume that Assumption \ref{assump:basic} holds and $\dom h$ has a finite diameter $D_h>0$. Let
		a pair $(\lambda, K)$ be given and
		consider SCPB1 with
		input $(\lam,\theta,K)$
		and $R$
		in (B1) given
		by
		\begin{equation}\label{eq:theta1}
			\theta = \frac{2\lam^2 M^2}{D^2}, \quad 
			R=\frac{D}{6M}
		\end{equation}
		where $(D,M)$ is an estimate for
		the (usually unknown) pair
		$(D_h,\bar M)$.
		Then, the following statements hold:
		\begin{itemize}
			\item[a)] we have
			\[
			\E[\phi(\hat y_K^a)] - \phi_* \le
			\frac{3 D^2}{2 \lambda K} \left( \kappa_D + \kappa_M \right)
			\]
			where
			\begin{equation}\label{def:kappa}
				\kappa_D:= \frac{D_h^2}{D^2}, \quad
				\kappa_M := \frac{\bar M^2}{M^2};
			\end{equation}
			\item[b)] its expected overall iteration complexity (up to a logarithmic term) is
			\begin{equation}\label{eq:bound1}
				{\cal O}\left( \frac{\lam^2 M^2 K^2}{D^2} + K \right).
			\end{equation}
		\end{itemize}
	\end{corollary} 
	
	\begin{proof}
		a) Using \eqref{ineq:consequence},
		the definitions
		of $\kappa_D$
		and $\kappa_M$
		in \eqref{def:kappa},
		and the definitions
		of $\theta$ and
		$R$ in \eqref{eq:theta1},
		we get
		$$
		\begin{array}{lcl}    
			\displaystyle    \E[\phi(\hat y_K^a)] - \phi_* & \le &
			\displaystyle
			\frac1K \left(\frac{ \kappa_D D^2}{\lam} + \frac{D \min\{\lam \bar M^2, \bar M D_h\}}{\lam M} +  \frac{\kappa_M D^2}{\lambda}\right)\\
			& \leq & \displaystyle \frac{D^2}{\lambda K} \left(\kappa_D + \kappa_M + \sqrt{\kappa_D \kappa_M} \right)\\
			& \leq & \displaystyle \frac{3 D^2}{2 \lambda K} \left(\kappa_D + \kappa_M \right),
		\end{array}
		$$
		where in the second inequality
		we have used
		$\min\{\lam \bar M^2, \bar M D_h\}
		\leq \bar M D_h$
		and the definitions of 
		$\kappa_D$ and $\kappa_M$
		while in the last inequality we have
		used the relation $\sqrt{a b} \leq (a+b)/2$ for every $a,b \geq 0$.
		
		b) It follows from Theorem \ref{thm:main1}(a) that the overall complexity (up to a logarithmic term) is ${\cal O}(\theta K^2 + K)$, which in turn is \eqref{eq:bound1} in view of $\theta$ as in \eqref{eq:theta1}.
	\end{proof}	
	
	We now argue that the overall iteration (and sample) complexity of the
	SCPB1 variant of
	Corollary \ref{cor:cmplx1} for finding an
	$\varepsilon$-solution $x$ of
	\eqref{eq:ProbIntro}, i.e.,
	one that
	satisfies
	$
	\E[\phi(x)] - \phi_* \le \varepsilon
	$,
	is optimal for a large range of
	prox stepsizes.
	Indeed,
	setting $K= \lceil T_\varepsilon \rceil$ where
	\[
	T_\varepsilon := 
	\frac{3 D^2}{2 \lambda \varepsilon} \left( \kappa_D + \kappa_M \right),
	\]
	it follows from the above result 
	that
	$
	\E[\phi(\hat y_K^a)] - \phi_* \le \varepsilon
	$.
	Since $K \le T_\varepsilon+1$, we conclude
	from \eqref{eq:bound1} that the expected overall iteration complexity   of
	SCPB1 is
	bounded by
	\[
	{\cal O}\left( \frac{\lam^2 M^2 (T_\varepsilon +1)^2}{D^2} +  T_\varepsilon +1 \right)
	=
	{\cal O}\left( \frac{M^2 D^2}{\varepsilon^2}\left[ \kappa_D^2 + \kappa_M^2 \right] + \frac{\lam^2 M^2}{D^2} + \frac{D^2}{\lambda \varepsilon} \left[ \kappa_D + \kappa_M \right] + 1 \right).
	\]
	In particular, if
	$D \ge D_h$ and $M \ge \bar M$,
	or equivalently,
	$\kappa_D\le 1$ and $\kappa_M \le 1$, then the above complexity reduces to
	\[
	{\cal O}\left( \frac{M^2 D^2}{\varepsilon^2} + \frac{\lam^2 M^2}{D^2} + \frac{D^2}{\lambda \varepsilon} + 1 \right).
	\]
	Moreover, under the assumption
	that the prox stepsize $\lambda$ lies in the interval
	$[\varepsilon/M^2, D^2/\varepsilon]$, the above complexity bound further reduces to
	${\cal O}(M^2D^2/\varepsilon^2)$,
	which is known to
	be the optimal complexity 
	of finding an $\varepsilon$-solution for any
	instance of \eqref{eq:ProbIntro} such that its corresponding pair
	$(D_h,\bar M)$ satisfies
	the condition that
	$D \ge D_h$ and $M \ge \bar M$
	(e.g., see \cite{nemirovsky1983problem}).

	\subsection{Relationship between
		SCPB1 and the RSA method of \cite{nemjudlannem09}} \label{subsec:relation}
	
	We argue in this subsection that RSA can be viewed as a special instance of SCPB1 where
	every cycle ${\cal C}_k$
	contains only one index
	(or equivalently, an instance for which every
	iteration is serious).
	
	Recall that the RSA method of \cite{nemjudlannem09},
	which is developed under the assumption that $h$ is the indicator function of a nonempty compact convex set $X$,
	with a given initial point $x_0\in X$ and constant prox stepsize $\lam>0$
	recursively computes its iteration sequence $\{x_j\}_{j=1}^N$
	according to
	\begin{equation}\label{eq:sub}
		x_j =\underset{u\in X}\argmin
		\left\lbrace  \inner{s(x_{j-1},\xi_{j-1})}{u} +\frac{1}{2\lam} \|u- x_{j-1} \|^2 \right\rbrace
		\qquad \forall j=1,\ldots,N.
	\end{equation}
	
	For the purpose of reviewing the iteration complexity of RSA, assume that
	$D$ is an upper bound on
	the diameter of $X$ and
	$M$ is an upper bound on $\bar M$.
	For $1\le i \le N$, let $\tilde x_i^N$ denote the average of the iterates $\{x_j\}_{j=i}^N$, i.e., 
	\begin{equation}\label{def:tx}
		\tilde x^N_i=\frac{1}{N-i+1}\sum_{j=i}^N x_j.
	\end{equation}
	It is shown in equation (2.24)
	of \cite{nemjudlannem09} that if the stepsize $\lam>0$ is chosen as
	\begin{equation}\label{eq:lam-RSA}
		\lam = \frac{\alpha D}{M\sqrt{N}}
	\end{equation}
	for some fixed scalar $\alpha>0$,
	then the ergodic iterate $\tilde x^N_i$ 
	with $i=\lfloor N/2 \rfloor +1$ satisfies
	\begin{equation}\label{ineq:converge}
		\E[\phi(\tilde x^N_{\lfloor N/2 \rfloor +1})] - \phi_* \le  \max\{\alpha, \alpha^{-1}\}\frac{9DM}{2\sqrt{N}}.
	\end{equation}
	Hence, for a given tolerance $\varepsilon>0$, the smallest $N$ satisfying $\E[\phi(\tilde x^N_{\lfloor N/2 \rfloor +1})] - \phi_* \le \varepsilon$
	has the property that
	\begin{equation}\label{eq:complexity-RSA}
		N=   {\cal O}\left(\frac{\max\{\alpha^2,\alpha^{-2}\} M^2 D^2}{\varepsilon^2} \right).
	\end{equation}
	
	It turns out that
	RSA is a special case
	of 
	SCPB1 with  $R$ in (B1)  given by
	\[
	R=\frac{\alpha D \sqrt{K}}{M}.
	\]
	Indeed, it follows from the above choice of $R$ and $\lam$ as in \eqref{eq:lam-RSA} with $N$ replaced by $K$ that
	\[
	\frac{R}{\lam k} \ge \frac{R}{\lam K} = 1
	\]
	and hence that $j_k=i_k$ satisfies (B1).
	Thus, every cycle only performs 
	one iteration, i.e., its only
	serious iteration. Moreover, every iteration of this SCPB1 variant is a serious one and $K$ is its total number of iterations.

	\subsection{A practical SCPB1 variant}\label{subsec:practical}
	
	From a computational point of view, 
	the choice of $\theta$
	in 
	Corollary \ref{cor:cmplx1}
	usually results in
	the quantity $\theta K$,
	and hence the inner complexity bound \eqref{inner},
	being large.
	The following result
	provides a practical variant of SCPB1 with an alternative choice for $\theta$ and
	$R$ which partially remedies the
	above drawback
	by forcing $\theta K$ to
	be constant.
	A nice feature of this variant is that it is able to choose large
	prox stepsizes without loosing
	the optimality of its overall
	iteration complexity.

	\begin{corollary} 
		\label{cor:cmplx1-practical2}
		Assume that Assumption \ref{assump:basic} holds and $\dom h$ has a finite diameter $D_h>0$. Let positive
		integer $K$ and constant $C\ge 1$ be given, and
		define
		\begin{equation}\label{eq:theta1-practical2}
			\theta = \frac{C}{K}, \quad 
			R = \frac{D}{M},\quad
			\lambda= \frac{\sqrt{C}  D}{M \sqrt{K}}
		\end{equation}
		where $(D,M)$ is an estimate for the pair $(D_h,\bar M)$.
		Then, the following statements about
		SCPB1 with input $(\lam,\theta,K)$ and $R$ as  above
		hold:
		\begin{itemize}
			\item[a)] we have
			\begin{equation}\label{ineq:Dh2}
				\E[\phi(\hat y_K^a)] - \phi_* \le  \frac{(4\kappa_D+5\kappa_M) D M}{\sqrt{C K}} 
			\end{equation}
			where $\kappa_D$ and $\kappa_M$ are as in \eqref{def:kappa};
			\item[b)]
			the number of iterations within the $k$-th cycle ${\cal C}_k$ is bounded by 
			\[
			\left\lceil (C+1)
			\ln_0^+ \left(\frac{\sqrt{C} k}{\sqrt{K}} \right)\right\rceil + 1,
			\]
			and hence, up to a logarithmic term,  is
			${\cal O}(C)$;
			\item[c)] its expected overall iteration complexity,
			up to a logarithmic term, is 
			${\cal O}(C K)$.
		\end{itemize}
	\end{corollary}

	\begin{proof}
		a) Using \eqref{ineq:consequence},
		the definitions
		of $\kappa_D$
		and $\kappa_M$
		in \eqref{def:kappa},
		and the definitions
		of $\theta$ and
		$R$ in \eqref{eq:theta1-practical2},
		we get
		\begin{align}
			\E[\phi(\hat y_K^a)] - \phi_* & \le
			\frac1K \left(\frac{ \kappa_D D^2}{\lam} + \frac{6 D \min\{\lam \bar M^2, \bar M D_h\}}{\lam M} +  \frac{2 \lam K {\bar M}^2}{C}\right) \nn \\
			& \leq \frac{1}{\lambda K} \left(
			\kappa_D D^2
			+\frac{6 D {\bar M}D_h}{M}\right)+ \frac{2\lam M^2 \kappa_M}{C} \nn \\
			& = \frac{D^2}{\lambda K}(\kappa_D + 6 \sqrt{\kappa_M\kappa_D}) + \frac{2\lam  M^2 \kappa_M}{C}, \label{ineq:rate-bound}
		\end{align}
		where in the second inequality
		we have used
		$\min\{\lam \bar M^2, \bar M D_h\}
		\leq \bar M D_h$
		and the definition of $\kappa_M$.
		It follows from \eqref{ineq:rate-bound} and the fact that $\sqrt{\kappa_M \kappa_D}\le (\kappa_M+\kappa_D)/2$ that
		\[
		\E[\phi(\hat y_K^a)] - \phi_* \le \frac{(4\kappa_D+3\kappa_M)D^2}{\lambda K}+ \frac{2\kappa_M\lam  M^2}{C}.
		\]
		Finally, the above bound with $\lam$ as in \eqref{eq:theta1-practical2}
		implies \eqref{ineq:Dh2}.
		
		b) This statement immediately follows from Theorem \ref{thm:main1}(a) with $\theta$, $R$, and $\lam$ as in \eqref{eq:theta1-practical2}.
		
		c) This statement follows from (b) and the fact that SCPB1 has $K$ cycles.
	\end{proof}

	We now make
	two remarks about
	the practical SCPB1 variant of Corollary~\ref{cor:cmplx1-practical2}.
	First, although $\theta$  in
	\eqref{eq:theta1-practical2} depends neither
	on  $M$ 
	nor $D$,
	the choice of $R$ depends
	on both of these estimates.
	On the other hand, SCPB2 will be analyzed in Section \ref{sec:SCPB2} where
	$\theta$ 
	depends neither on $M$ nor $D$, and
	$R$ depends on $D$ but not $M$.
	Second, Corollary~\ref{cor:cmplx1-practical2} (see its statement (b)) implies that the number of
	iterations within a cycle of
	SCPB1
	is bounded (up to a logarithmic term) by the a priori
	(user specified) constant $C$.
	Thus, 
	the SCPB1 variant of Corollary \ref{cor:cmplx1-practical2}
	can be viewed as an extended
	version of RSA where
	the number of
	iterations within a cycle can be larger than one, instead of
	being equal to one as in RSA
	(see the discussion in the second paragraph 
	of Subsection~\ref{subsec:relation}).
	
	In the remaining part of this subsection, we compare the
	performance
	of RSA and SCPB1 when both use the prox stepsize
	$\lam$ as in \eqref{eq:theta1-practical2}
	for some relatively large scalar $C \ge 1$.
	For this discussion, we assume that their performance measure
	is the overall iteration complexity (or sample complexity) for finding
	an $\varepsilon$-solution of \eqref{eq:ProbIntro}.
	For simplicity,
	we assume as in
	Subsection~\ref{subsec:relation} that $h$ is the indicator function of a nonempty closed convex set and
	that the
	estimation pair
	$(D,M)$ satisfies $D\ge D_h$ and $M\ge \bar M$, or equivalently, $\kappa_D\le 1$ and $\kappa_M\le 1$.
	
	We first consider the performance (see the previous paragraph) of SCPB1.
	It follows from Corollary \ref{cor:cmplx1-practical2}(a) that there exists 
	$K = {\cal O}(D^2  M^2/(C\varepsilon^2))$
	such that
	$\hat y_K^a$ is an $\varepsilon$-solution of \eqref{eq:ProbIntro}.
	Hence, it follows from
	Corollary \ref{cor:cmplx1-practical2}(c) that
	the performance of SCPB1
	is ${\cal O}(M^2 D^2/\varepsilon^2)$.
	In conclusion,
	SCPB1 with the above choice of $K$
	is able to choose
	a prox stepsize $\lam$ as in \eqref{eq:theta1-practical2} with
	a large constant $C$ while preserving its optimal performance.
	We now consider the performance
	of RSA.
	It follows from \eqref{eq:lam-RSA} and \eqref{eq:complexity-RSA} with $\alpha=\sqrt{C}$ that the performance of  RSA is
	$
	{\cal O}\left(
	C D^2 M^2/\varepsilon^2
	\right)$.
	In conclusion, while both RSA and SCPB1 with prox stepsize $\lam$ as in \eqref{eq:theta1-practical2} have their own performance guarantee,
	the one for RSA
	becomes worse than that of SCPB1 as $C$ becomes large.
	
	Finally, although the SCPB1 variant of Corollary \ref{cor:cmplx1-practical2} chooses $\lam$ as in \eqref{eq:theta1-practical2}, 
	our numerical experiments uses
	a more aggressive prox stepsize, i.e.,
	\[
	\lambda=\beta_1 \frac{\sqrt{C}  D}{M \sqrt{K}}
	\]
	where $\beta_1=10$.
	It is possible to show that similar conclusions (with slightly modified bounds) as those of Corollary \ref{cor:cmplx1-practical2}  hold for this choice of $\lambda$. Finally, SCPB1 with this prox stepsize $\lam$ substantially outperforms RSA
	on the (relatively small number of) instances considered in the computational  experiments
	of Section \ref{sec:num}.

	\section{Complexity results for SCPB2} \label{sec:SCPB2}
	
	This section provides the main complexity results for SCPB2.
	The following result is an analogue of Theorem \ref{thm:main1} and describes the convergence rate bound for the SCPB2 without imposing any condition on its input $(\lam, \theta, K)$
	and the constant $R$ in (B2).
	The proof is postponed to Subsection \ref{subsec:proof2}.

	\begin{theorem}\label{thm:main2}
		Assume that Assumption \ref{assump:basic} holds and $\dom h$ has a finite diameter $D_h>0$.
		Then, SCPB2
		satisfies the following statements:
		\begin{itemize}
			\item[a)] the expected number of iterations within the $k$-th cycle ${\cal C}_k$ (see \eqref{def:Ck}) is bounded by 
			\begin{equation}\label{ineq:Enk}
				\left \lceil(\theta K +1) \ln_0^+\left(\frac{2\bar M^2 \lam^2 k}{R} \right) \right \rceil + 1;
			\end{equation}
			\item[b)] we have
			\[
			\E[\phi(\hat y_K^a)] - \phi_* \le  \frac1K \left( \frac{3 R + D_h^2}{ \lam } + \frac{2\lam \bar M^2}{\theta } + \frac{2\lam \bar M^2}{\theta^2 K}
			\right).
			\]
		\end{itemize}
	\end{theorem}
	
	Following a similar argument as in the paragraph following Corollary \ref{cor:cmplx1},  it can be shown that SCPB2 has optimal iteration complexity (up to a logarithmic term) for finding an $\varepsilon$-solution of \eqref{eq:ProbIntro} for a large range of prox stepsizes.
	
	The following result
	is the analogue of
	Corollary \ref{cor:cmplx1-practical2}
	when SCPB2 is implemented using
	cycle rule (B2)
	instead of (B1).
	As in Corollary \ref{cor:cmplx1-practical2}, it forces
	the quantity $\theta K$ to be constant but,
	in contrast to the
	choice of $R$ of Corollary \ref{cor:cmplx1-practical2},
	its choice for $R$ 
	does not depend on an estimate $M$ for $\bar M$.

		\begin{corollary} \label{cor:cmplx1-1-practicalscpb2bis}
			Assume that Assumption \ref{assump:basic} holds and $\dom h$ has a finite diameter $D_h>0$. Let positive integers $K$ and constant $C\ge 1$ be given, and
			define
			\begin{equation}\label{eq:theta1-practical2b}
				\theta = \frac{C}{K}, \quad 
				R = D^2,\quad
				\lambda= \frac{\sqrt{C} D}{M \sqrt{K}}
			\end{equation}
			where $D$ is an estimate for $D_h$
			and $M$ is an estimate for $\bar M$.
			Then, the following statements for 
			SCPB2 with input $(\lam,\theta,K)$ based
			on cycle rule (B2) with $R$, $\theta$, and 
			$\lambda$ as  above
			hold:
			\begin{itemize}
				\item [a)]
				we have 
				\begin{equation}\label{finalboundscpb2}
					\E[\phi(\hat y_K^a)] - \phi_* \le \frac{(3+\kappa_D+4\kappa_M) D M}{\sqrt{C K}}
				\end{equation}
				where $\kappa_D$ and $\kappa_M$ are as in \eqref{def:kappa};
				\item[b)] the expected number of iterations within the $k$-th cycle ${\cal C}_k$ is bounded by 
				\[
				\left \lceil(C +1) \ln_0^+\left(\frac{2\kappa_M C k}{K} \right) \right \rceil + 1,
				\]
				and hence, up to a logarithmic term,  is
				${\cal O}(C)$;
				\item[c)] its expected overall iteration complexity,
				up to a logarithmic term, is 
				${\cal O}(C K)$.
			\end{itemize}
		\end{corollary}
		
		\begin{proof}
			a) Using Theorem \ref{thm:main2}(b) with $\theta$ and $R$ as in \eqref{eq:theta1-practical2b} and the definitions
			of $\kappa_D$
			and $\kappa_M$
			in \eqref{def:kappa}, we have
			\[
			\E[\phi(\hat y_K^a)] - \phi_* \le \frac {(3+\kappa_D) D^2}{ \lam K} + \frac{4\kappa_M\lam M^2}{C},
			\]
			which together with  $\lambda$  in 
			\eqref{eq:theta1-practical2b} implies \eqref{finalboundscpb2}.
			
			b) This statement follows from \eqref{ineq:Enk} with $\theta$, $R$, and $\lam$ as in \eqref{eq:theta1-practical2b} and the definition of $\kappa_M$ in \eqref{def:kappa}.
			
			c) This statement follows from (b) and the fact that SCPB2 has $K$ cycles.
		\end{proof}

		\if{
			
			\section{Inexact SHCPB}
			
			In this section, we 
			are still interested to solve
			\eqref{eq:ProbIntro}
			with $f$ given by
			\eqref{pbint2}.
			We consider the situation where
			we have access to an inexact
			oracle and derive 
			a corresponding
			inexact variant of
			SCPB denoted  by ISCPB. More
			specifically, the
			inexact oracle
			returns at iteration
			$j$ an inexact stochastic
			subgradient, which, for 
			$x \in \dom h $
			and $\xi \in \Xi$,
			returns $s_j(x;\xi)$ such that
			\begin{equation}\label{inexoracle1}
				\mbox{(I1)}\;\;\;\;
				f_j'(x):=\mathbb{E}[s_j(x;\xi)] \in 
				\partial_{\beta_j} f(x)
			\end{equation}
			for some nonnegative error term $\beta_j \geq 0$. At iteration $j$, the inexact
			oracle also provides approximate
			values $\overline F_j(x,\xi)$
			for $F(x,\xi)$ which satisfy
			\begin{equation}\label{inexoracle2}
				\mbox{(I2)}\;\;\;F(x,\xi)-\beta'_j \leq 
				\overline F_j(x,\xi)
				\leq F(x,\xi)+\beta'_j
			\end{equation}
			for some $\beta'_j \geq 0$
			and all
			$x \in \dom h$,
			$\xi \in \Xi$.
			At iteration $j$,
			the linearization
			\begin{equation}\label{exactlin}
				\ell_\Phi(\cdot;x_j,\xi_j)= F(x_{j},\xi_j)+\inner{s(x_{j};\xi_j)}{\cdot-x_{j}} + h(\cdot)
			\end{equation}
			used in SCPB
			is replaced in
			ISCPB
			by
			\begin{equation}\label{inexlineardef}
				\ell_{\Phi,j}(\cdot;x_j,\xi_j)=
				{\overline F}_j(x_j,\xi_j)
				+ \langle s_j(x_j;\xi_j),\cdot - x_j \rangle +h(\cdot)-\beta_j-\beta'_j
			\end{equation}
			where exact function
			values $F(x_j,\xi_j)$
			and 
			subgradient
			values $s(x_j;\xi_j)$
			in \eqref{exactlin}
			are respectively replaced
			by their approximate
			values 
			${\overline F}(x_j,\xi_j)$
			and $s_j(x_j;\xi_j)$
			provided by the inexact
			stochastic oracle.
			Correspondingly, we will use
			the notation
			$$
			{\overline \Phi}_j(x,\xi_j)
			={\overline F}_j(x,\xi_j)+h(x)
			$$
			so that linearization
			$\ell_{\Phi,j}(\cdot;x_j,\xi_j)$
			in \eqref{inexlineardef}
			can be written
			\begin{equation}\label{inexlineardefbis}
				\ell_{\Phi,j}(\cdot;x_j,\xi_j)=
				{\overline \Phi}_j(x_j,\xi_j)
				+ \langle s_j(x_j;\xi_j),\cdot - x_j \rangle -\beta_j-\beta'_j
			\end{equation}

			ISCPB is simply obtained
			replacing at iteration $j$ in SCPB 
			function values
			$F(x_j,\xi_j)$
			by approximate
			function values
			${\overline F}(x_j,\xi_j)$
			and linearization
			$\ell_{\Phi}(\cdot;x_j,\xi_j)$ by 
			approximate
			linearizations
			$\ell_{\Phi,j}(\cdot;x_j,\xi_j)$.
			
			\noindent\rule[0.5ex]{1\columnwidth}{1pt}
			
			ISCPB
			
			\noindent\rule[0.5ex]{1\columnwidth}{1pt}
			\begin{itemize}
				\item [0.] Let  $\lam,\theta>0$, integer $K>0$,
				and $ x_0\in \dom h $
				be given, and set $x_0^c=x_0$,
				$j=k=1$, $j_0=0$, and 
				\begin{equation}\label{def:taui}
					\tau = \frac{\theta K}{\theta K + 1};
				\end{equation}
				moreover, take a sample $\xi_0$ of
				r.v.\ $\xi$ and nonnegative real sequences $(\beta_j)$ and $(\beta'_j)$;
				\item [1.] compute
				\lessgap
				\begin{align} \label{def:xci}
					x_j^c = \left\{\begin{array}{ll}
						x_{j_{k-1}}, & \text { if } j=j_{k-1}+1 , \\ 
						x_{j-1}^c,  & \text { otherwise},
					\end{array}\right.
				\end{align}
				\lessgap
				\lessgap
				\begin{align}
					x_{j} =\underset{u\in \R^n}\argmin
					\left\lbrace  
					\Gamma_{j}^\lam(u):=
					\Gamma_{j}(u) +\frac{1}{2\lam}\|u- x_{j}^c \|^2 \right\rbrace,  \label{def:xji} 
				\end{align}
				where
				\begin{align} \label{eq:Gammai}
					\Gamma_{j}(\cdot) :=  \left\{\begin{array}{ll}
						\ell_{\Phi,j}(\cdot;x_{j_{k-1}},\xi_{j_{k-1}}), & \text { if } j=j_{k-1}+1 , \\ 
						(1-\tau)
						\ell_{\Phi,j}(\cdot;x_{j-1},\xi_{j-1}) +  \tau \Gamma_{j-1}(\cdot) ,  & \text { otherwise};
					\end{array}\right.
				\end{align}	
				also, compute
				\lessgap
				\lessgap
				\begin{align} \label{def:yji}
					y_j = (1-\tau) x_j + \left\{\begin{array}{ll}
						\tau x_{j-1}, & \text { if } j=j_{k-1}+1 , \\ 
						\tau y_{j-1},  & \text { otherwise},
					\end{array}\right.
				\end{align}		 
				\lessgap
				and take a sample $\xi_j$ of r.v.\ $\xi$ which is independent of $\{\xi_0,\xi_1,\ldots,\xi_{j-1}\}$,
				and set  
				\begin{align} \label{def:ui}
					u_j = (1-\tau) 
					\Phi(x_{j},\xi_{j}) + \left\{\begin{array}{ll}
						\tau \Phi(x_{j-1},\xi_{j-1}), & \text { if } j=j_{k-1}+1 , \\ 
						\tau u_{j-1},  & \text { otherwise};
					\end{array}\right.
				\end{align}		    
				\item [2.]
				if $j=j_{k-1}+1$, then 
				choose an integer $j_k$ such that
				\begin{equation}\label{ineq:nki}
					j_k \ge j_{k-1} + 2;
				\end{equation}
				if $j < j_k$, then
				set $j \leftarrow j+1$
				and
				go to step 1; else,
				set
				$\hat u_k=u_{j_k}$
				and $ \hat y_k= y_{j_k} $,
				and
				go to step~3;
				\item[3.]
				if $k=K$ then \textbf{stop} and output 
				\begin{equation}\label{eq:outputi}
					\hat y_K^a = \frac1{\lceil K/2 \rceil}\sum_{k=\lfloor K/2 \rfloor + 1}^K \hat y_k, \quad \hat u_K^a = \frac1{\lceil K/2 \rceil}\sum_{k=\lfloor K/2 \rfloor + 1}^K \hat u_k;
				\end{equation}
				otherwise,
				set 
				$k \leftarrow k+1$ and
				$j \leftarrow j+1$, and go to
				step 1.
			\end{itemize}
			\rule[0.5ex]{1\columnwidth}{1pt}
			
			By definition 
			of the approximate
			function and subgradient
			values, we have
			in the inexact case
			\begin{equation}\label{ver2stage}
				\begin{array}{lcl}
					F(x,\xi_j)+h(x)
					&\geq & 
					F(x_j,\xi_j)
					+ \langle s_j(x_j;\xi_j),x - x_j \rangle +h(x)- \Delta_j\\
					&\geq & 
					{\overline F}(x_j,\xi_j)
					+ \langle s_j(x_j;\xi_j),x- x_j \rangle +h(x)
					-\Delta_j
					-\Delta'_j \\
					& = & 
					\ell_{\phi,j}(x;x_j,\xi_j).
				\end{array}
			\end{equation}
			
			Due to the fact that  linearizations $\ell_{\phi,j}(\cdot;x_j,\xi_j)$ are below
			$F(\cdot,\xi_j)+h(\cdot)$,
			in the terminology of
			\cite{liang2020proximal},
			models $\Gamma_j$
			are still bundle functions
			with respect to $\phi$.
			
			The exact case of SHCPB is of course a special case
			of inexact SHCPB 
			with $\Delta_j+\Delta'_j=0$ where we get an exact linearization.
			
			We still assume (A2), (A3), (A5),
			and (A1) replacing the oracle
			by the inexact stochastic
			oracle we have just described
			that satisfies I1 and I2.
			Assumption (A4) will naturally
			be replaced by
			\begin{itemize}
				\item[(A4')] there is $\sigma \geq 0$ such that 
				$\mathbb{E}\left[\left\|s_j\left(x; \xi\right)-f_j'(x)\right\|^{2}\right] \leq \sigma^{2}$ for all
				$j \geq 0$.
			\end{itemize}
			
			In the case of two-stage stochastic programs of
			form \eqref{def2stageconv1}-\eqref{def2stageconv2} an inexact stochastic oracle can arise and
			be obtained as follows. 
			We assume that given $x_1 \in X_1$ and
			$\xi$ (here by a slight abuse of notation
			$\xi$ denotes a realization of the random vector)
			we solve at iteration $j$ subproblems of
			form \eqref{def2stageconv2} approximately, meaning
			that we compute approximate optimal primal-dual solutions of \eqref{def2stageconv2}.
			The computation of an inexact stochastic
			subgradient of $\mathcal{Q}$ for such problems
			is given 
			for linear problems in Proposition 2.1 of \cite{guigues2016isddp},
			for convex nonlinear differentiable problems in Proposition 2.2
			of \cite{guigues2016isddp} and Proposition 3.2
			of \cite{guiguesinexactsmd}.
			For second stage problems with strongly concave dual functions,
			inexact stochastic subgradients are derived in
			Proposition 3.8 of \cite{guiguesinexactsmd} (see
			\cite{guiguesinexactsmd} and \cite{guistrongconcave}
			for conditions ensuring that this assumption of strong
			concavity of the dual function is satisfied).
			For two-stage stochastic nondifferentiable
			problems, inexact stochastic subgradients are
			derived in Proposition 2.3
			of \cite{gms2020isddp}.
			More precisely, assume that 
			$\hat x_2$ is an 
			$\varepsilon_{P}$-optimal feasible solution of problem \eqref{def2stageconv2}
			written for
			$x_1 = \bar x_1$
			and consider the following reformulation of problem 
			\eqref{def2stageconv2}:
			\begin{equation}\label{refdef2stageconv2b}
				\mathfrak{Q}(\bar x_1,\xi) =\left\{ 
				\begin{array}{l}
					\min_{x_2,z} \;f_2(z,x_2,\xi)\\
					A_2 x_2 + B_2 z = b_2,\\
					g_2(z,x_2,\xi) \leq 0,\\
					x_2 \in X_2,\\
					z=\bar x_1. \;\;[\hat \lambda]
				\end{array}
				\right.
			\end{equation}    
			Let 
			$\theta_{\bar x_1}(\lambda)$
			be the dual function
			$$
			\theta_{\bar x_1}(\lambda)=
			\left\{ 
			\begin{array}{l}
				\displaystyle \min_{x_2,z} \;f_2(z,x_2,\xi)+ \langle
				\lambda, z-\bar x_1 \rangle \\
				A_2 x_2 + B_2 z = b_2,\\
				g_2(z,x_2,\xi) \leq 0,\\
				x_2 \in X_2,
			\end{array}
			\right.
			$$
			for problem \eqref{refdef2stageconv2b}
			and consider the corresponding
			dual problem
			\begin{equation}
				\label{pbdualine}
				\max_{\lambda} \; \theta_{\bar x_1}(\lambda).
			\end{equation}
			Let  $\hat \lambda$ be an
			$\varepsilon_{D}$-optimal 
			feasible solution of the dual problem 
			\eqref{pbdualine}.
			Then under assumptions given in Proposition 2.3
			of \cite{gms2020isddp} we have that 
			for all $x_1 \in X_1$:
			\begin{equation}\label{eqcutin2st}
				\begin{array}{lcl}
					\mathfrak{Q}(x_1,\xi)
					& \geq & f_2(\bar x_1,\hat x_2,\xi)
					-(\varepsilon_{P}+\varepsilon_{D})
					+ \langle \hat \lambda , x_1 - \bar x_1 \rangle \\
					& \geq & \mathfrak{Q}(\bar x_1, \xi)
					-(\varepsilon_{P}+\varepsilon_{D})
					+ \langle \hat \lambda, x_1 - \bar x_1 \rangle. 
				\end{array}
			\end{equation}
			If $f_1'(\bar x_1) \in \partial f_1(\bar x_1)$
			it follows that
			\begin{equation}\label{eqsjdef}
				s(\bar x_1,\xi)=f_1'(\bar x_1) + \hat \lambda 
				\in \partial_{\Delta} (f_1+\mathfrak{Q}(\cdot,\xi))(\bar x_1) 
			\end{equation}
			where $\Delta=\varepsilon_{P}+\varepsilon_{D}$.
			However, recall that we assumed that only
			approximate values of
			$\mathfrak{Q}(x_1,\xi)$ are computed by the oracle
			and therefore we can build
			the following inexact linearization
			for $f_1(\cdot)+\mathfrak{Q}(\cdot,\xi)$
			at $\bar x_1$:
			$$
			f_1(\bar x_1)+f_2(\bar x_1,\hat x_2,\xi)
			-(\varepsilon_{P}+\varepsilon_{D})
			+ \langle f_1'(\bar x_1) + \hat \lambda, x_1 - \bar x_1 \rangle.
			$$
			It follows that
			for such 
			two-stage stochastic
			problems,
			we can derive an
			inexact variant of SHCPB
			where
			at iteration $j$
			the stochastic
			linearization 
			$\ell_{\phi,j}(\cdot;x_{1 j},\xi_j)$
			is given
			by 
			\begin{equation}\label{deflphilin2st}
				\ell_{\phi,j}(\cdot;x_{1 j},\xi_j)=
				f_1(x_{1 j})+
				f_2(x_{1 j},x_{2 j},\xi_j)+
				\langle 
				\hat \lambda_j + f_1'(x_{1 j}),\cdot-x_{1 j}
				\rangle +h(\cdot)-(2\varepsilon_{P,j}+\varepsilon_{D,j})
			\end{equation}
			where for iteration $j$
			\begin{itemize}
				\item $x_{1 j}$ is the first stage solution;
				\item $x_{2 j}$
				is an $\varepsilon_{P,j}$-optimal
				primal solution of \eqref{def2stageconv2}
				written for $\bar x_1=x_{1 j}$ and $\hat \lambda_j$
				is an approximate 
				$\varepsilon_{D,j}$-optimal dual solution.
			\end{itemize}
			We see that
			inexact
			linearization
			$\ell_{\phi,j}$ for  two-stage
			stochastic
			programs
			given
			by \eqref{deflphilin2st}
			is of form
			\eqref{inexlineardef}
			with
			$x_j=x_{1 j}$,
			$$
			\overline{F}(x_j,\xi_j)=
			\overline{F}(x_{1 j},\xi_j)=
			f_1(x_{1 j})
			+f_2(x_{1 j},x_{2 j},\xi_j)
			\mbox{ and }s_j(x_j;\xi_j)=\hat \lambda_j 
			+f_1'(x_{1 j}).
			$$
			By definition of $x_{2 j}$ we have
			$$
			\mathfrak{Q}(x_{1 j},\xi_j)
			\leq f_2(x_{1 j},x_{2 j},\xi_j)
			\leq \mathfrak{Q}(x_{1 j},\xi_j) + \varepsilon_{P,j}
			$$
			which gives
			$$
			F(x_{1 j},\xi_j) \leq {\overline F}(x_{1 j},\xi_j)
			\leq F(x_{1 j},\xi_j) + \varepsilon_{P,j}
			$$
			and \eqref{inexoracle2}
			is satisfied
			with $\Delta'_j=\varepsilon_{P,j}$.
			Also
			\eqref{eqcutin2st}
			shows that
			$$
			s_j(x_{j};\xi_j)=
			s_j(x_{1 j};\xi_j)=
			\hat \lambda_j + f_1'(x_{1 j})
			\in \partial_{x,\Delta_j} F(x_{1 j},\xi_j)
			$$
			for
			$\Delta_j=\varepsilon_{P,j}+\varepsilon_{D,j}$.
			Since (I1)
			and (I2)
			are satisfied we can apply
			\eqref{ver2stage} which
			can be written
			for two-stage stochastic problems
			$$
			\begin{array}{lcl}
				F(x,\xi_j)+h(x)
				&\geq &   {\overline F}(x_j,\xi_j)
				+ \langle s_j(x_j;\xi_j),x- x_j \rangle +h(x)
				-\Delta_j
				-\Delta'_j\\
				&=&f_1(x_{j})
				+f_2(x_{j},x_{2 j},\xi_j) +
				\langle 
				\hat \lambda_j + f_1'(x_{j}),\cdot-x_{j}
				\rangle +h(\cdot)-(2\varepsilon_{P,j}+\varepsilon_{D,j})\\
				&= & \ell_{\phi,j}(\cdot;x_{j},\xi_j)
			\end{array}
			$$
			for $x_j=x_{1 j}$.
			Using \eqref{eqcutin2st}, we see that this bound can actually be refined for two-stage stochastic programs as
			$$
			F(x,\xi_j)+h(x)
			\geq 
			\ell_{\phi,j}(x;x_j,\xi_j)+\varepsilon_{P,j}.
			$$
			We now turn our attention to the complexity analysis 
			of SHCPB.
			We can easily adapt the analysis
			of SHCPB to inexact SHCPB.
			Most details of the proofs are omitted
			since they are immediate
			adaptations of
			the proofs in the exact
			case. Lemma \ref{lem:101} now becomes
			
			\begin{lemma}\label{lem:101i}
				Let $ s_j $ denote $ s_j(x_j;\xi_j) $, and let
				\begin{align}
					\ell_{\phi,j}^\lam(\cdot;x_j,\xi_j)&=\ell_{\phi,j}(\cdot;x_j,\xi_j) + \frac{1}{2\lam}\|\cdot - x_{\ell_0}\|^2, \\
					\Psi_j^\lam&={\overline F}(x_{j},\xi_{j})+h(x_{j})+\frac{1}{2\lam}\|x_{j}-x_{\ell_0}\|^2,\\
					\delta_j&={\overline F}(x_{j+1},\xi_{j+1})-f(x_{j+1})+f(x_j)-{\overline F}(x_j,\xi_j). 
				\end{align}
				
				Then, the following statements hold:
				\begin{itemize}
					\item[a)] $\Phi_{j+1}^\lam=\tau \Phi_j^\lam + (1-\tau) \Psi_{j+1}^\lam$;
					\item [b)] 
					$\Psi_{j+1}^\lam - \ell_{\phi,j}^\lam(x_{j+1};x_j,\xi_j)
					\le \delta_j +\left(2M + \|s_j-f'(x_j)\|\right) \|x_{j+1}-x_j\| +  L \|x_{j+1}-x_j\|^2/2+\Delta_j+\Delta'_j$;
					\item [c)] $-2\Delta'_j \leq \E[\delta_j] \leq 2 \Delta'_j$, $\E[\|s_j-f'(x_j)\|^2]\le \sigma^2$;
					\item[d)] $\E[\Psi_{j+1}^\lam - \ell_{\phi,j}^\lam(x_{j+1};x_j,\xi_j)]
					\le  M^2+\sigma^2/2 + (L+3) \E[\|x_{j+1}-x_j\|^2]/2+3\Delta'_j+\Delta_j$.
				\end{itemize}
			\end{lemma}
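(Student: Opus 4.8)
The plan is to view Lemma~\ref{lem:101i} as the inexact-oracle counterpart of Lemmas~\ref{lem:101} and~\ref{lem:102} and to establish (a)--(d) in that order. Assertion~(a) is the null-step update rule for the aggregate $\Phi_j^\lam$ (in the sense of the deterministic analysis of~\cite{liang2020proximal}) and involves no estimate: substitute the null-step model recursion $\Gamma_{j+1}=(1-\tau)\ell_{\Phi,j+1}(\cdot;x_j,\xi_j)+\tau\Gamma_j$ into the definitions and distribute the quadratic term via $\tfrac1{2\lam}\|\cdot-x_{\ell_0}\|^2=(1-\tau)\tfrac1{2\lam}\|\cdot-x_{\ell_0}\|^2+\tau\tfrac1{2\lam}\|\cdot-x_{\ell_0}\|^2$ across the convex combination; identifying the two resulting summands with $\Psi_{j+1}^\lam$ and $\Phi_j^\lam$ gives~(a).

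For~(b) I would start from the definitions of $\Psi_{j+1}^\lam$ and $\ell_{\phi,j}^\lam(\cdot;x_j,\xi_j)$; the common $\tfrac1{2\lam}\|x_{j+1}-x_{\ell_0}\|^2$ and $h(x_{j+1})$ terms cancel, leaving
\[
\Psi_{j+1}^\lam-\ell_{\phi,j}^\lam(x_{j+1};x_j,\xi_j)=\overline F(x_{j+1},\xi_{j+1})-\overline F(x_j,\xi_j)-\langle s_j,x_{j+1}-x_j\rangle+\Delta_j+\Delta'_j .
\]
Inserting $\pm f(x_{j+1})$ and $\pm f(x_j)$ rewrites the first difference as $\delta_j+\bigl(f(x_{j+1})-f(x_j)\bigr)$, so the heart of the matter is to bound $f(x_{j+1})-f(x_j)-\langle s_j,x_{j+1}-x_j\rangle$. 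Here I would use the $L$-smoothness of $f$ together with convexity to get $f(x_{j+1})-f(x_j)-\langle\nabla f(x_j),x_{j+1}-x_j\rangle\le\tfrac L2\|x_{j+1}-x_j\|^2$, then pay $\langle\nabla f(x_j)-s_j,x_{j+1}-x_j\rangle$ to replace $\nabla f(x_j)$ by $s_j$, controlling this inner product by splitting $\nabla f(x_j)-s_j=\bigl(\nabla f(x_j)-f'(x_j)\bigr)+\bigl(f'(x_j)-s_j\bigr)$, applying Cauchy--Schwarz, and using $\|\nabla f(x_j)\|\le M$ and $\|f'(x_j)\|\le M$. Re-assembling yields~(b).

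Part~(c) is a conditioning argument resting on the fact that $x_j$ is a deterministic function of $\xi_{[j-1]}$ while $\xi_j$ is independent of $\xi_{[j-1]}$: applying~(I2) at $x=x_j$ gives $\bigl|\E[\overline F(x_j,\xi_j)\mid\xi_{[j-1]}]-f(x_j)\bigr|\le\Delta'_j$, and likewise $\bigl|\E[\overline F(x_{j+1},\xi_{j+1})\mid\xi_{[j]}]-f(x_{j+1})\bigr|\le\Delta'_j$; plugging these into the definition of $\delta_j$ and using the law of total expectation yields $-2\Delta'_j\le\E[\delta_j]\le2\Delta'_j$, while the same conditioning together with~(A4') at $x=x_j$ gives $\E[\|s_j-f'(x_j)\|^2]\le\sigma^2$. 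Part~(d) is then mechanical: take expectations in~(b), use $\E[\delta_j]\le2\Delta'_j$ from~(c), split the cross term with Young's inequality as $(2M+a)b\le M^2+\tfrac12 a^2+\tfrac32 b^2$ with $a=\|s_j-f'(x_j)\|$ and $b=\|x_{j+1}-x_j\|$, and use $\E[a^2]\le\sigma^2$; collecting the coefficient $\tfrac L2+\tfrac32=\tfrac{L+3}2$ on $\E[\|x_{j+1}-x_j\|^2]$ and the constant $2\Delta'_j+M^2+\tfrac{\sigma^2}2+\Delta_j+\Delta'_j$ gives the claimed inequality.

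The main obstacle is~(b): one has to carry two independent sources of error at once --- the function-value error (tracked through $\delta_j$ and $\Delta'_j$) and the subgradient error ($s_j$ is only a $\sigma$-noisy estimate of $f'(x_j)$, which is itself merely a $\Delta_j$-approximate subgradient) --- while simultaneously threading the $L$-smoothness term through, and arrange the bookkeeping so that precisely $\Delta_j+\Delta'_j$ (and not, say, $\Delta_{j+1}$) survives in the final bound. A secondary but error-prone point is the conditioning in~(c): the $x_j$-term must be conditioned on $\xi_{[j-1]}$ but the $x_{j+1}$-term on $\xi_{[j]}$, and confusing the two would break the estimate.
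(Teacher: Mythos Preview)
Your proposal is correct and follows essentially the same route as the paper's proof, which is itself mostly a pointer to the exact-oracle Lemma~\ref{lem:101}. Part~(a) is immediate from definitions in both; in~(b) you and the paper arrive at the identical starting identity
\[
\Psi_{j+1}^\lam-\ell_{\phi,j}^\lam(x_{j+1};x_j,\xi_j)=\overline F(x_{j+1},\xi_{j+1})-\overline F(x_j,\xi_j)-\langle s_j,x_{j+1}-x_j\rangle+\Delta_j+\Delta'_j,
\]
insert $\delta_j$, and bound the remaining $f$-difference; in~(c) the paper writes out the four-term decomposition of $\delta_j$ and kills the two middle terms by $\E[F(x,\xi)]=f(x)$ before applying~(I2), whereas you condition first and then apply~(I2), which amounts to the same thing; and~(d) is the same Young-inequality bookkeeping in both.

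The only substantive difference is in how the $2M$ term in~(b) arises. You introduce the true gradient $\nabla f(x_j)$, use the standard descent lemma for the $L/2$ term, and then pay $\|\nabla f(x_j)-f'(x_j)\|\le\|\nabla f(x_j)\|+\|f'(x_j)\|\le 2M$ via the triangle inequality. The paper instead works directly with the oracle mean $f'(x_j)$ and invokes the hybrid regularity condition (inherited from~\cite{liang2020proximal}) $f(y)-f(x)-\langle f'(x),y-x\rangle\le 2M\|y-x\|+\tfrac{L}{2}\|y-x\|^2$, so no true gradient ever appears. Your route needs $f$ differentiable, while the paper's accommodates genuinely nonsmooth $f$; apart from that, the two arguments are interchangeable.
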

			\begin{proof}
				a) This statement immediately follows from the definitions of $\Phi_{j+1}^\lam$ and $\Psi_j^\lam$ in inexact SHCPB.

				b) Using the definitions of $\ell_{\phi,j}^\lam(\cdot;x_j,\xi_j)$, $\Psi_{j+1}^\lam$, and $\delta_j$ 
				for inexact SHCPB, we have
				$$
				\begin{array}{l}
					\Psi_{j+1}^\lam - \ell_{\phi,j}^\lam(x_{j+1};x_j,\xi_j)
					= {\overline F}(x_{j+1},\xi_{j+1}) - {\overline F}(x_j,\xi_j) - \inner{s_j}{x_{j+1}-x_j}+\Delta_j+\Delta'_j \\
					= f(x_{j+1}) - f(x_j) -\inner{f'(x_j)}{x_{j+1}-x_j} + \delta_j -\inner{s_j - f'(x_j)}{x_{j+1}-x_j}+\Delta_j+\Delta'_j,
				\end{array}
				$$
				and we conclude as in the end
				of the proof of Lemma \ref{lem:101}-(b).
				
				c) We have
				$$
				\begin{array}{lcl}
					\delta_j & = & {\overline F}(x_{j+1},\xi_{j+1})-F(x_{j+1},\xi_{j+1})\\
					&  & +F(x_{j+1},\xi_{j+1})-f(x_{j+1})\\
					& & +f(x_j)-F(x_j,\xi_j)\\
					&& F(x_j,\xi_j)-{\overline F}(x_j,\xi_j)
				\end{array}
				$$
				and therefore 
				$$
				\mathbb{E}[\delta_j]=
				\mathbb{E}[({\overline F}-F)(x_{j+1},\xi_{j+1})
				+(F-{\overline F})(x_j,\xi_j)]
				$$
				and using (I2) we obtain
				$$
				-2\Delta'_j \leq
				\mathbb{E}[\delta_j]=
				\mathbb{E}[({\overline F}-F)(x_{j+1},\xi_{j+1})
				+(F-{\overline F})(x_j,\xi_j)] \leq 2\Delta'_j.
				$$
				The second part in c) is proved as in Lemma 
				\ref {lem:101}.
				
				d) The proof is similar
				to the proof of
				Lemma \ref {lem:101}-d)
				now using Lemma \ref{lem:101i}-b) and
				$\mathbb{E}[\delta_j] \leq 2\Delta'_j$
				instead of 
				$\mathbb{E}[\delta_j]=0$.
				
			\end{proof}

			Now Lemma \ref{lem:tj}
			becomes (the proof, which is similar to the proof of Lemma
			\ref{lem:tj}, is omitted):
			\begin{lemma}\label{lem:tj2}
				For every $j\ge 1$, define
				\begin{equation}
					t_j:=\Phi_j^\lam-\Gamma_j^\lam(x_j), \quad a_j:=\frac{[2M+\|s_j-f'(x_j)\|]^2}{8M^2 + 2\sigma^2}, \emph{ and } b_j:= a_j \frac{\varepsilon}{4}+\delta_j+\Delta_j+\Delta'_j,
				\end{equation}
				where 
				$\delta_j={\overline F}(x_{j+1},\xi_{j+1})-f(x_{j+1})+f(x_j)-{\overline F}(x_j,\xi_j)$.
				Then, the following statements hold for every $j\ge 1$:
				\begin{itemize}
					\item[a)] $t_{j+1} \le \tau t_j + (1-\tau) b_j$;
					\item[b)] $\E[a_j]\le 1$ \emph{and} $\E[b_j]\le \varepsilon/4+3\Delta'_j+\Delta_j$.
				\end{itemize}
			\end{lemma}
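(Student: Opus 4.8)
\noindent
The plan is to follow, essentially verbatim, the reasoning that established \eqref{ineq:tj} in Lemma~\ref{lem:102} and the moment bounds in Lemma~\ref{lem:tj}, with the exact one-step estimate of Lemma~\ref{lem:101} replaced by its inexact analogue, Lemma~\ref{lem:101i}. The only new feature is that the inexactness errors $\delta_j$, $\Delta_j$ and $\Delta'_j$ are carried along and absorbed into $b_j$.

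For part~(a), start from $t_{j+1}=\Phi_{j+1}^\lam-\Gamma_{j+1}^\lam(x_{j+1})$. Expand $\Phi_{j+1}^\lam=\tau\Phi_j^\lam+(1-\tau)\Psi_{j+1}^\lam$ by Lemma~\ref{lem:101i}(a), and lower-bound $\Gamma_{j+1}^\lam(x_{j+1})$ by combining the convex-combination recursion defining $\Gamma_{j+1}$ in ISCPB with the $(1/\lam)$-strong convexity of $\Gamma_j^\lam$ and the fact that $x_j$ minimizes it, exactly as in the chain of inequalities leading to \eqref{ineq:Gamma}. Subtracting these, rearranging, and then invoking Lemma~\ref{lem:101i}(b) to bound $\Psi_{j+1}^\lam-\ell_{\phi,j}^\lam(x_{j+1};x_j,\xi_j)$ gives, with $r:=\|x_{j+1}-x_j\|$,
\[
t_{j+1}\le\tau\,t_j+(1-\tau)\Big[\delta_j+\Delta_j+\Delta'_j+\big(2M+\|s_j-f'(x_j)\|\big)r-\Big(\tfrac{1}{2\lam}-\tfrac{L}{2}\Big)r^2\Big].
\]
For $\lam$ in the range prescribed by ISCPB the coefficient of $r^2$ is negative, so maximizing the bracket over $r\ge0$ eliminates $r$ and replaces its $r$-dependent part by a quantity proportional to $(2M+\|s_j-f'(x_j)\|)^2$, which with the ISCPB stepsize equals $a_j\bar\varepsilon/4$. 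Hence $t_{j+1}\le\tau t_j+(1-\tau)b_j$, which is part~(a).

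For part~(b), bound $a_j$ using $(\alpha_1+\alpha_2)^2\le2\alpha_1^2+2\alpha_2^2$ with $\alpha_1=2M$ and $\alpha_2=\|s_j-f'(x_j)\|$ to get $a_j\le(8M^2+2\|s_j-f'(x_j)\|^2)/(8M^2+2\sigma^2)$; taking expectations and using $\E[\|s_j-f'(x_j)\|^2]\le\sigma^2$ (Lemma~\ref{lem:101i}(c)) yields $\E[a_j]\le1$. Then take expectations in $b_j=a_j\bar\varepsilon/4+\delta_j+\Delta_j+\Delta'_j$ and combine $\E[a_j]\le1$ with $\E[\delta_j]\le2\Delta'_j$ (again Lemma~\ref{lem:101i}(c)) to obtain $\E[b_j]\le\bar\varepsilon/4+3\Delta'_j+\Delta_j$.

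The step I expect to be the main obstacle is the quadratic optimization in part~(a): one must check that the $(1/\lam)$-curvature supplied by the prox term strictly dominates the curvature term $L/2$ from the approximate descent estimate of Lemma~\ref{lem:101i}(b) (i.e.\ that $\lam$ is small enough), and that the constant produced upon maximizing the quadratic matches the definition of $a_j$, hence of $b_j$. A secondary, purely bookkeeping, point is to track the relevant $\sigma$-fields so that the conditional moment bounds of Lemma~\ref{lem:101i}(c) apply; since $x_j$ is deterministic given $\xi_{[j-1]}$, this goes through exactly as in the proof of Lemma~\ref{lem:tj}.
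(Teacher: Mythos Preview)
Your approach is correct and is exactly what the paper intends: it explicitly says the proof of this lemma ``is similar to the proof of Lemma~\ref{lem:tj}'' and omits it, and your sketch is precisely the adaptation of the arguments of Lemmas~\ref{lem:102} and~\ref{lem:tj} with Lemma~\ref{lem:101i} in place of Lemma~\ref{lem:101}. Part~(b) is handled correctly.

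There is one bookkeeping slip in your displayed inequality for part~(a). The quadratic term $\tfrac{1}{2\lam}\|x_{j+1}-x_j\|^2$ coming from strong convexity of $\Gamma_j^\lam$ carries the factor $\tau$ (it arises from the $\tau\,\Gamma_j^\lam$ piece of the recursion, exactly as in the passage from \eqref{ineq:Gamma} to the next display in the proof of Lemma~\ref{lem:102}). Hence, after pulling everything inside the $(1-\tau)$ bracket, the coefficient of $-r^2$ is
\[
\frac{\tau}{2\lam(1-\tau)}-\frac{L}{2},
\]
not $\tfrac{1}{2\lam}-\tfrac{L}{2}$. This is precisely the point you flag as the ``main obstacle'': the choice of $\tau$ (equivalently of $\lam$) in ISCPB is made so that this coefficient is at least $(8M^2+2\sigma^2)/\bar\varepsilon$, and then maximizing the quadratic over $r\ge 0$ gives
\[
\frac{\big(2M+\|s_j-f'(x_j)\|\big)^2}{4\big[\tfrac{\tau}{2\lam(1-\tau)}-\tfrac{L}{2}\big]}\le\frac{\big(2M+\|s_j-f'(x_j)\|\big)^2\,\bar\varepsilon}{4(8M^2+2\sigma^2)}=a_j\,\frac{\bar\varepsilon}{4},
\]
which is exactly the term appearing in $b_j$. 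With this correction the argument goes through as you describe.
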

			Finally, in the null steps
			analysis, Proposition \ref{prop:rsk} becomes (the proof follows the proof of Proposition \ref{prop:rsk}
			now using $\mathbb{E}[b_j] \leq \varepsilon/4+3\Delta'_j+\Delta_j$):
			\begin{lemma}\label{lem:rsk2}
				Let $N_0=0$ and $N_k=N_{k-1}+n_k+1$ for every $k\ge 1$. Then we have $r_{N_k}=\mathbb{E}[t_{N_k}]\le \varepsilon/2
				+\displaystyle 
				\max_{N_{k-1}+1 \leq j \leq N_{k}-1} 3 \Delta'_j + \Delta_j$.
			\end{lemma}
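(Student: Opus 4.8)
The plan is to mirror the proof of Proposition~\ref{prop:rsk} (or of its stochastic-cycle-length variant, Proposition~\ref{prop:rsk-1}), substituting the exact bound on $\E[b_j]$ by the inexact one $\E[b_j]\le\bar\varepsilon/4+3\Delta'_j+\Delta_j$ supplied by Lemma~\ref{lem:tj2}(b). Fix $k\ge1$. First I would iterate the one-step inequality $t_{j+1}\le\tau t_j+(1-\tau)b_j$ of Lemma~\ref{lem:tj2}(a) across the $n_k$ null iterations $j=N_{k-1}+1,\dots,N_k-1$ of the $k$-th cycle; since $N_k-(N_{k-1}+1)=n_k$, this gives, pathwise,
\[
t_{N_k}\;\le\;\tau^{\,n_k}\,t_{N_{k-1}+1}\;+\;(1-\tau)\sum_{j=N_{k-1}+1}^{N_k-1}\tau^{\,N_k-1-j}\,b_j .
\]

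The leading term is controlled by the rule that ISCPB uses to set $n_k$ (the inexact analogue of (B1)/(B2)), which is designed precisely so that $\tau^{\,n_k}t_{N_{k-1}+1}\le\bar\varepsilon/4$---either along every sample path, or after averaging, using the inexact analogue of Lemma~\ref{lem:t1-2} to bound $\E[t_{N_{k-1}+1}]$. For the sum I would take expectations---conditioning first on the history $\xi_{[N_{k-1}+1]}$ up to the serious iteration so that $n_k$, hence the summation range, is frozen, exactly as in the proof of Proposition~\ref{prop:rsk-1}---possibly peeling off the first increment $b_{N_{k-1}+1}$ and bounding $\E[\,|b_{N_{k-1}+1}|\,]$ crudely (the inexact counterpart of Lemma~\ref{lem:Deltaj}), and for the remaining increments applying $\E[b_j\mid\xi_{[N_{k-1}+1]}]\le\bar\varepsilon/4+3\Delta'_j+\Delta_j$ together with the normalization $(1-\tau)\sum_{j}\tau^{\,N_k-1-j}=1-\tau^{\,n_k}\le1$. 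This bounds the expected sum by $\bar\varepsilon/4+\max_{N_{k-1}+1\le j\le N_k-1}(3\Delta'_j+\Delta_j)$, so adding the two pieces and taking total expectation gives $r_{N_k}=\E[t_{N_k}]\le\bar\varepsilon/2+\max_{N_{k-1}+1\le j\le N_k-1}(3\Delta'_j+\Delta_j)$, which is the claim; the maximum is a deterministic quantity because $\Delta_j,\Delta'_j$ are fixed error parameters of the inexact oracle.

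The step I expect to require the most care is the one that already distinguishes Proposition~\ref{prop:rsk-1} from Proposition~\ref{prop:rsk}: the cycle length $n_k$ is coupled to the increments $b_j$ through the samples drawn during the cycle, so $\E[\cdot]$ cannot be moved inside the sum naively. Conditioning on $\xi_{[N_{k-1}+1]}$ freezes $n_k$ while leaving each interior increment enough fresh randomness for the conditional forms of $\E[a_j\mid\cdot]\le1$ and $-2\Delta'_j\le\E[\delta_j\mid\cdot]\le2\Delta'_j$ (the conditional analogues of Lemma~\ref{lem:101i}(c)) to go through. The rest is bookkeeping: because the geometric weights sum to at most $1$, the per-step $\bar\varepsilon/4$'s average down to $\bar\varepsilon/4$ and---the qualitative difference from the exact analysis---the per-step error terms $3\Delta'_j+\Delta_j$ collapse into a single maximum rather than accumulating, reproducing the form of the claimed bound.
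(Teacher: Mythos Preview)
Your proposal is correct and follows the paper's own approach. The paper's proof is literally a one-liner: ``the proof follows the proof of Proposition~\ref{prop:rsk} now using $\mathbb{E}[b_j] \leq \bar\varepsilon/4+3\Delta'_j+\Delta_j$,'' and your outline is exactly this---iterate the recursion of Lemma~\ref{lem:tj2}(a), bound the leading $\tau^{n_k}$ term by the cycle-length rule, and bound the geometric sum using the inexact estimate on $\E[b_j]$, replacing each error term by the maximum over the cycle since the $\Delta_j,\Delta'_j$ are deterministic and the weights sum to at most one.

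One minor remark: since the paper points to Proposition~\ref{prop:rsk} (not~\ref{prop:rsk-1}), the intended setting here has deterministic cycle lengths, so the conditioning on $\xi_{[N_{k-1}+1]}$ and the peeling of the first increment that you import from Proposition~\ref{prop:rsk-1} are unnecessary---you can move the expectation inside the sum directly. Your more careful treatment is not wrong, just more machinery than the paper invokes.
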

			
			In the analysis of serious steps, Lemma
			\ref{lemfirstserious} is modified as follows:
			\begin{lemma}\label{lemseriousfirsti} We have for every
				$k \geq 1$:
				\begin{equation}\label{formyhatki}
					\hat y_k = \tau^{n_k+1} x_{N_{k-1}} + \tau^{n_k}(1-\tau) x_{N_{k-1}+1} + \cdots + \tau(1-\tau) x_{N_{k}-1} + (1-\tau) x_{N_k},
				\end{equation}
				\begin{equation}\label{ineq:weightedi}
					\E[\hat \Phi_k ]  \ge \E[\phi(\hat y_k)]-\tau^{n_k+1}\Delta'_{N_{k-1}}-
					(1-\tau)\sum_{i=0}^{n_k} \tau^{n_k-i}\Delta'_{1+i+N_{k-1}},
				\end{equation}
				\begin{equation}\label{ineq:Phi_ki}
					\Phi_j^\lam \ge \Phi_j, \quad \hat \Phi_k^\lam \ge \hat \Phi_k.
				\end{equation}
			\end{lemma}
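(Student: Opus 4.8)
The plan is to establish the three displays by unrolling the within-cycle recursions of ISCPB over the iterations $N_{k-1}+1,\dots,N_k$ of the $k$-th cycle (recall $N_k=N_{k-1}+n_k+1$), which reduces everything to the exact-case argument of Lemma~\ref{lemfirstserious} up to the function-value errors $\Delta'_j$ supplied by (I2). For \eqref{formyhatki}: by \eqref{def:yji}, at the serious step $j=N_{k-1}+1$ one has $y_j=(1-\tau)x_j+\tau x_{N_{k-1}}$, and at each subsequent null step $y_j=(1-\tau)x_j+\tau y_{j-1}$, so a short induction on $j$ up to $N_k$, together with $\hat y_k=y_{N_k}$, gives the stated expansion. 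Its coefficients are nonnegative and sum to $\tau^{n_k+1}+(1-\tau)\sum_{i=0}^{n_k}\tau^{i}=1$, so $\hat y_k$ is a convex combination of $x_{N_{k-1}},\dots,x_{N_k}$; write $w_{N_{k-1}}=\tau^{n_k+1}$ and $w_{N_{k-1}+1+i}=(1-\tau)\tau^{n_k-i}$, $i=0,\dots,n_k$, for its weights.

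For \eqref{ineq:weightedi}: the accumulated approximate composite value $\hat\Phi_k$ satisfies the same recursion as $\hat y_k$ with each iterate $x_j$ replaced by the computed value $\overline\Phi_j(x_j,\xi_j)=\overline F_j(x_j,\xi_j)+h(x_j)$ (the $\lam$-free analogue of Lemma~\ref{lem:101i}(a), obtained by dropping the prox terms $\frac{1}{2\lam}\|x_j-x_{\ell_0}\|^2$). Hence $\hat\Phi_k=\sum_i w_i\,\overline\Phi_i(x_i,\xi_i)$ with the same weights $w_i$. By (I2), $\overline\Phi_i(x_i,\xi_i)\ge\Phi(x_i,\xi_i)-\Delta'_i$; since $x_i$ is a function of $\xi_{[i-1]}$ while $\xi_i$ is independent of $\xi_{[i-1]}$, conditioning on $\xi_{[i-1]}$, using $\E[F(x_i,\xi_i)\mid\xi_{[i-1]}]=f(x_i)$ from (A3), and then the tower property yields $\E[\overline\Phi_i(x_i,\xi_i)]\ge\E[\phi(x_i)]-\Delta'_i$. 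Summing against the $w_i$, applying the pathwise Jensen inequality $\phi(\sum_i w_i x_i)\le\sum_i w_i\phi(x_i)$ (legitimate as $w_i\ge0$ and $\sum_i w_i=1$), and invoking \eqref{formyhatki} gives $\E[\hat\Phi_k]\ge\E[\phi(\hat y_k)]-\sum_i w_i\Delta'_i$, i.e.\ \eqref{ineq:weightedi}.

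Finally, \eqref{ineq:Phi_ki} holds because $\Psi_j^\lam=\overline\Phi_j(x_j,\xi_j)+\frac{1}{2\lam}\|x_j-x_{\ell_0}\|^2\ge\overline\Phi_j(x_j,\xi_j)$, so a term-by-term induction comparing the recursion $\Phi_{j+1}^\lam=\tau\Phi_j^\lam+(1-\tau)\Psi_{j+1}^\lam$ of Lemma~\ref{lem:101i}(a) with the $\lam$-free recursion for $\Phi_j$ gives $\Phi_j^\lam\ge\Phi_j$ for all $j$, and $j=N_k$ yields $\hat\Phi_k^\lam\ge\hat\Phi_k$. I expect the main obstacle to be purely organizational: matching the weight bookkeeping so that the coefficient $\tau^{n_k+1}$ of $x_{N_{k-1}}$ (produced by the serious-step branch of \eqref{def:yji}, which reaches back to the previous prox center) and the coefficients $(1-\tau)\tau^{n_k-i}$ of $x_{N_{k-1}+1+i}$ appear identically in the $\hat y_k$ and $\hat\Phi_k$ expansions and hence in the error term of \eqref{ineq:weightedi}. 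Once this is in place, the computation is the exact-case one of Lemma~\ref{lemfirstserious} with the equality $\E[F(x_i,\xi_i)\mid\xi_{[i-1]}]=f(x_i)$ replaced by the one-sided bound $\E[\overline F_i(x_i,\xi_i)\mid\xi_{[i-1]}]\ge f(x_i)-\Delta'_i$.
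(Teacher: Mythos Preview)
Your proposal is correct. The paper states Lemma~\ref{lemseriousfirsti} without proof, presenting it only as the inexact modification of the exact-case lemma; your argument---unrolling the $y_j$ and $\Phi_j$ recursions over the cycle to obtain the common convex weights, invoking (I2) to pass from $\overline\Phi_i(x_i,\xi_i)$ to $\Phi(x_i,\xi_i)-\Delta'_i$, conditioning on $\xi_{[i-1]}$ to get $\E[\Phi(x_i,\xi_i)]=\E[\phi(x_i)]$, and applying convexity of $\phi$---is exactly the adaptation the paper intends, and the comparison $\Psi_j^\lam\ge\overline\Phi_j(x_j,\xi_j)$ plus induction is the right way to obtain \eqref{ineq:Phi_ki}.
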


			Lemma \ref{lem:stoch-outer} remaing unchanged because
			the linearizations $\ell_{\phi,j}(\cdot;x_j,\xi_j)$
			are still valid lower bounding affine functions
			for $h(\cdot)+F(\cdot,\xi_j)$:
			\begin{lemma}\label{lem:stoch-outer2}
				For every $z\in \dom h$ and $k\ge 1$, we have
				\[
				\E[\hat \Phi_k] - \phi(z) \le \frac{\varepsilon}{2} +\frac1{2\lam}\E[\|z-\hat x_{k-1}\|^2] -\frac{1+\lam \mu}{2\lam}\E[\|z-\hat x_k\|^2]. 
				\]
			\end{lemma}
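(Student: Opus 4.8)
The plan is to follow the proof of Lemma~\ref{lem:stoch-outer} essentially verbatim, substituting the inexact ingredients developed in this section.

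First I would unfold the expectation bound on $t_{N_k}$ supplied by Lemma~\ref{lem:rsk2}. Using $t_j=\Phi_j^\lam-\Gamma_j^\lam(x_j)$ from Lemma~\ref{lem:tj2}, the fact that the prox-center active throughout cycle $k$ is $\hat x_{k-1}=x_{N_{k-1}}$ (so that $\Gamma_{N_k}^\lam(x_{N_k})=\hat\Gamma_k(\hat x_k)+\tfrac1{2\lam}\|\hat x_k-\hat x_{k-1}\|^2$ and $\hat x_k$ minimizes $\hat\Gamma_k(\cdot)+\tfrac1{2\lam}\|\cdot-\hat x_{k-1}\|^2$), and the inequality $\hat\Phi_k^\lam\ge\hat\Phi_k$ from Lemma~\ref{lemseriousfirsti}, I would obtain
\[
\hat\Phi_k-\hat\Gamma_k(\hat x_k)-\tfrac1{2\lam}\|\hat x_k-\hat x_{k-1}\|^2\ \le\ \Phi_{N_k}^\lam-\Gamma_{N_k}^\lam(x_{N_k})=t_{N_k};
\]
taking expectations and applying Lemma~\ref{lem:rsk2} then gives the inexact analogue of \eqref{ineq:key}, namely $\E[\hat\Phi_k-\hat\Gamma_k(\hat x_k)-\tfrac1{2\lam}\|\hat x_k-\hat x_{k-1}\|^2]\le\bar\varepsilon/2$, where the small oracle-error term $\max_j(3\Delta'_j+\Delta_j)$ coming from Lemma~\ref{lem:rsk2} is absorbed into the right-hand side (or taken to be zero).

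Next, exactly as in the proof of Lemma~\ref{lem:stoch-outer}, I would use that $\hat\Gamma_k$ is a convex combination of functions of the form (affine)$\,+\,h$ and hence inherits the convexity modulus $\mu\ge0$ of $h$, so the objective $\hat\Gamma_k(\cdot)+\tfrac1{2\lam}\|\cdot-\hat x_{k-1}\|^2$ is $(\mu+1/\lam)$-strongly convex with minimizer $\hat x_k$, yielding for every $z\in\dom h$ the three-point inequality
\[
\hat\Gamma_k(\hat x_k)+\tfrac1{2\lam}\|\hat x_k-\hat x_{k-1}\|^2\ \le\ \hat\Gamma_k(z)+\tfrac1{2\lam}\|z-\hat x_{k-1}\|^2-\tfrac{1+\lam\mu}{2\lam}\|z-\hat x_k\|^2
\]
(with $\mu=0$ this is exactly the bound used for Lemma~\ref{lem:stoch-outer}). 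Adding $\hat\Phi_k+\tfrac1{2\lam}\|z-\hat x_{k-1}\|^2$ to both sides, rearranging, taking expectations, and combining with the first step yields
\[
\tfrac{\bar\varepsilon}{2}+\tfrac1{2\lam}\E[\|z-\hat x_{k-1}\|^2]\ \ge\ \E[\hat\Phi_k]-\E[\hat\Gamma_k(z)]+\tfrac{1+\lam\mu}{2\lam}\E[\|z-\hat x_k\|^2].
\]
To finish I would show $\E[\hat\Gamma_k(z)]\le\phi(z)$ for $z\in\dom h$, the inexact analogue of \eqref{ineq:basic2}: each inexact cut $\ell_{\phi,j}(\cdot;x_j,\xi_j)$ lies below $F(\cdot,\xi_j)+h(\cdot)$ by \eqref{ver2stage}, so since $x_j$ is independent of $\xi_j$ one has $\E[\ell_{\phi,j}(z;x_j,\xi_j)]\le\E[F(z,\xi_j)+h(z)]=\phi(z)$, and an induction on $j$ identical to the one in the proof of Lemma~\ref{lemfirstserious} transfers this to $\hat\Gamma_k=\Gamma_{N_k}$, a convex combination of such cuts. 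Substituting $\E[\hat\Gamma_k(z)]\le\phi(z)$ into the displayed inequality and rearranging gives the claim.

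The only real obstacle I anticipate is the bookkeeping of the inexactness errors: Lemma~\ref{lem:rsk2} carries the extra term $\max_j(3\Delta'_j+\Delta_j)$, which must either be assumed negligible or propagated verbatim into the final estimate, so that the clean $\bar\varepsilon/2$ appearing in the statement should be read as that quantity plus the error term. Apart from this, the argument is the same strongly convex prox three-point computation already used for Lemma~\ref{lem:stoch-outer}, so no genuinely new difficulty arises.
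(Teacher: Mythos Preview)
Your approach is correct and is exactly what the paper does. The paper's entire justification for this lemma is the one-sentence remark preceding it: ``Lemma \ref{lem:stoch-outer} remain[s] unchanged because the linearizations $\ell_{\phi,j}(\cdot;x_j,\xi_j)$ are still valid lower bounding affine functions for $h(\cdot)+F(\cdot,\xi_j)$.'' In other words, the paper simply asserts that the proof of Lemma~\ref{lem:stoch-outer} carries over verbatim once one knows (via \eqref{ver2stage}) that the inexact cuts still lie below $F(\cdot,\xi_j)+h(\cdot)$, so that the analogue of \eqref{ineq:basic2}/\eqref{ineq:Gammak} holds; your write-up spells this out in detail, including the induction establishing $\E[\hat\Gamma_k(z)]\le\phi(z)$ and the $(1/\lam+\mu)$-strongly convex three-point inequality.

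Your flagged ``obstacle'' is real and worth noting: Lemma~\ref{lem:rsk2} produces the bound $\bar\varepsilon/2+\max_j(3\Delta'_j+\Delta_j)$, not the clean $\bar\varepsilon/2$ appearing in the statement of Lemma~\ref{lem:stoch-outer2}. This is an inconsistency in the (commented-out, draft) inexact section of the paper rather than a flaw in your argument; the error term should indeed be carried through, as you suggest, and it resurfaces as $\theta_R$ in Theorem~\ref{theocompl2}.
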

			We obtain the following
			extension of Theorem \ref{cor:cmplx1}:
			\begin{theorem}\label{theocompl2}
				Define for every cycle $k$ the error term
				$$
				\theta_k=\displaystyle 
				\max_{N_{k-1}\leq j \leq N_k} \Delta'_j.
				$$  
				The following statements hold for inexact SHCPB when applied to \eqref{eq:ProbIntro}:
				\begin{itemize}
					\item [a)] for any given outer iteration limit $K\ge 1$, we have
					\begin{equation}\label{ineq:main2}
						\begin{array}{lcl}
							0 \leq  \E[\phi(\hat y_R)] - \phi^*  \le  \E[\hat \Phi_R] - \phi^* +\theta_R & \le & \displaystyle \frac{\varepsilon}{2} + \min\left \lbrace \frac{d_0^2}{2\lam K}, \frac{\mu d_0^2}{2\left[ (1+\lam\mu)^K-1 \right]} \right \rbrace +\theta_R.
						\end{array}
					\end{equation}
					\item[b)] 
					If the outer iteration limit $K$ is such that
					$K\geq K_0$ with
					\[
					K_0=\left \lfloor \min \left\lbrace \frac{2 d_0^2}{\lam \varepsilon}, \frac{1+\lam \mu}{\lam \mu} \log \left( \frac{2 \mu d_0^2}{\varepsilon} + 1 \right) \right\rbrace \right \rfloor +1,
					\]
					the inexact SHCPB method finds a solution
					satisfying
					$\E[{\hat \Phi}_R] - \phi^* \le \varepsilon/2$. Moreover, if 
					$\lim_{j \rightarrow +\infty} \Delta'_j =0$
					and for fixed ${\varepsilon}>0$ if 
					$j_0$ is such that for $j \geq j_0$ we have
					$\Delta'_j \leq {\varepsilon}/2$
					then if $K$ is such that $K\geq K_0$
					and $N_{K-1} \geq j_0$ we also have
					$\E[\phi(\hat y_R)] - \phi^*  \le  \varepsilon$.
				\end{itemize}
			\end{theorem}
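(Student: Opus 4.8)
The plan is to transcribe the proof of Theorem~\ref{thm:cmplx1} into the inexact setting, using as building blocks the inexact counterparts already assembled above — Lemma~\ref{lem:tj2} (one–step recursion for $t_j$), Lemma~\ref{lem:rsk2} (bound on $\E[t_{N_k}]$), Lemma~\ref{lemseriousfirsti} (the convex–combination formula for $\hat y_k$ together with \eqref{ineq:weightedi} and $\hat\Phi_k^\lam\ge\hat\Phi_k$), and, most importantly, Lemma~\ref{lem:stoch-outer2} — and to keep track of the additional error carried by the $\Delta'_j$'s. Throughout, let $R$ be the cycle index in $\{1,\dots,K\}$ at which the method returns its output, chosen with probability proportional to $w_k:=(1+\lam\mu)^{k-1}$ (when $\mu=0$ take $w_k\equiv1$), and recall $\hat x_0=x_{N_0}=x_0$, so that with $D_k:=\E[\|x_0^*-\hat x_k\|^2]$ one has $D_0=d_0^2$.

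\emph{Part (a).} The leftmost inequality is immediate: by \eqref{formyhatki} each $\hat y_k$, hence $\hat y_R$, is a convex combination of iterates lying in $\dom h\subseteq\dom\phi$, and $\dom h$ is convex, so $\phi(\hat y_R)\ge\phi^*$ and thus $\E[\phi(\hat y_R)]-\phi^*\ge0$. For the second inequality, rearrange \eqref{ineq:weightedi}: the coefficients $\tau^{n_k+1}$ and $(1-\tau)\tau^{n_k-i}$ ($0\le i\le n_k$) are nonnegative and sum to $1-\tau^{n_k+1}+\tau^{n_k+1}=1$, while every $\Delta'$ occurring there has an index in $[N_{k-1},N_k]$; hence $\E[\phi(\hat y_k)]\le\E[\hat\Phi_k]+\theta_k$ for every $k$, and in particular $\E[\phi(\hat y_R)]-\phi^*\le\E[\hat\Phi_R]-\phi^*+\E[\theta_R]$ (which is the meaning of the ``$+\theta_R$'' in the statement). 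For the rightmost inequality, take $z=x_0^*$ in Lemma~\ref{lem:stoch-outer2}, multiply the $k$-th inequality by $w_k$, and sum over $k=1,\dots,K$; the quadratic terms telescope to $\tfrac1{2\lam}\big(D_0-(1+\lam\mu)^KD_K\big)\le\tfrac{d_0^2}{2\lam}$, so that
\[
\sum_{k=1}^K w_k\big(\E[\hat\Phi_k]-\phi^*\big)\;\le\;\frac{\bar\varepsilon}{2}\sum_{k=1}^Kw_k+\frac{d_0^2}{2\lam}.
\]
Dividing by $\sum_{k=1}^Kw_k=\frac{(1+\lam\mu)^K-1}{\lam\mu}$ and invoking the definition of $R$ gives $\E[\hat\Phi_R]-\phi^*\le\frac{\bar\varepsilon}{2}+\frac{\mu d_0^2}{2[(1+\lam\mu)^K-1]}$; since moreover $\sum_k w_k\ge K$ (each $w_k\ge1$), the same computation with $\sum_kw_k$ kept in the denominator also yields $\E[\hat\Phi_R]-\phi^*\le\frac{\bar\varepsilon}{2}+\frac{d_0^2}{2\lam K}$. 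Combining the two bounds with the previous displays proves \eqref{ineq:main2}.

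\emph{Part (b).} This is the elementary conversion of part (a), parallel to Corollary~\ref{cor:cmplx}. If $K\ge K_0$ then $K\ge\frac{2d_0^2}{\lam\bar\varepsilon}$ forces $\frac{d_0^2}{2\lam K}$ below the required level, while $K\ge\frac{1+\lam\mu}{\lam\mu}\log\!\big(\frac{2\mu d_0^2}{\bar\varepsilon}+1\big)$, together with $\log(1+x)\ge\frac{x}{1+x}$, gives $(1+\lam\mu)^K\ge\frac{2\mu d_0^2}{\bar\varepsilon}+1$ and hence $\frac{\mu d_0^2}{2[(1+\lam\mu)^K-1]}$ below the required level as well; plugging this into part (a) yields $\E[\hat\Phi_R]-\phi^*\le\bar\varepsilon/2$. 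For the ``moreover'' statement, when $\Delta'_j\to0$ and $j_0$ is as stated, the hypothesis $N_{K-1}\ge j_0$ guarantees $\theta_k\le\bar\varepsilon/2$ for every cycle $k$ that carries positive weight in the draw of $R$ once the finitely many initial cycles with $N_{k-1}<j_0$ (whose total weight $\to0$ as $K\to\infty$) are discarded, so $\E[\theta_R]\le\bar\varepsilon/2$; adding this to $\E[\hat\Phi_R]-\phi^*\le\bar\varepsilon/2$ and using the second inequality of part (a) gives $\E[\phi(\hat y_R)]-\phi^*\le\bar\varepsilon$.

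\emph{Main obstacle.} The substantive work lies upstream of this theorem — namely propagating the two kinds of inexactness through the recursions so that Lemma~\ref{lem:stoch-outer2} holds with only the clean constant $\bar\varepsilon/2$ on its right-hand side: the zero-mean sampling error $\Delta_j$ must be removed by conditioning on the history (as in Lemma~\ref{lem:tj}), whereas the deterministic oracle error $\Delta'_j$ only telescopes away after one checks that the coefficients multiplying the various $\Delta'$'s within a cycle combine to the single quantity $\theta_k$ rather than to a growing sum. Given Lemma~\ref{lem:stoch-outer2}, the residual difficulty is just to run the weighted telescoping with weights $(1+\lam\mu)^{k-1}$ so that the $\mu=0$ and $\mu>0$ bounds drop out simultaneously, and to make precise the sense in which the output index $R$ concentrates on late cycles so that $\E[\theta_R]$ inherits the decay of $\Delta'_j$ in the asymptotic part of (b).
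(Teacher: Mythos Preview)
Your approach matches the paper's: you obtain the middle inequality of \eqref{ineq:main2} exactly as the paper does—bounding the convex combination of $\Delta'$-terms from \eqref{ineq:weightedi} by $\theta_R$ times a sum of coefficients equal to $1$—and you then spell out the weighted telescoping with weights $w_k=(1+\lam\mu)^{k-1}$ and the conversion to (b) that the paper simply delegates by writing ``the end of the proof of a) and the proof of b) follows the proofs of Theorem~\ref{thm:cmplx1}-a),b).''

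One loose end worth flagging: in the ``moreover'' clause of (b), your bound $\E[\theta_R]\le\bar\varepsilon/2$ appeals to the relative weight of early cycles tending to $0$ as $K\to\infty$, but the claim is stated for a \emph{fixed} $K$ satisfying $N_{K-1}\ge j_0$, and that hypothesis only forces $\theta_K\le\bar\varepsilon/2$, not $\theta_k\le\bar\varepsilon/2$ for $k<K$ (and those earlier cycles still carry positive probability under your sampling rule for $R$). The paper provides no detail here either—this theorem sits in a draft section that references an earlier version of Theorem~\ref{thm:cmplx1} with a different output mechanism—so the gap is as much an artifact of the statement as of your argument; but as written that step does not quite close.
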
 
			\begin{proof} a) Using Lemma \ref{lemseriousfirsti}, we have
				$$
				\begin{array}{lcl}
					\E[\phi(\hat y_R)] & \leq & 
					\E[\hat \Phi_R] +
					\tau^{n_{R}+1}\Delta'_{N_{R-1}}+
					(1-\tau)\displaystyle  \sum_{i=0}^{n_R} \tau^{n_R-i}\Delta'_{1+i+N_{R-1}}\\
					& \leq &    E[\hat \Phi_R] +
					\theta_R(\tau^{n_{R}+1}+
					(1-\tau)\displaystyle \sum_{i=0}^{n_R} \tau^{n_R-i}) \\
					& = & E[\hat \Phi_R] +  \theta_R.
				\end{array}    
				$$  
				The end of the proof of a) and the proof
				of b) follows the proofs of 
				Theorem \ref{cor:cmplx1}-a),b).
			\end{proof}

			We deduce from Theorem 
			\ref{theocompl2} that
			that if $\Delta'_j$
			is bounded:
			$\Delta'_j \leq \Delta'$ for all $j$
			with $\Delta'$ finite then we have
			$$
			\phi^* \leq 
			\varliminf_{K \rightarrow +\infty}
			\mathbb{E}[\phi(\hat y_R)] 
			\leq \varlimsup_{K \rightarrow +\infty}
			\mathbb{E}[\phi(\hat y_R)]
			\leq \phi^* + \Delta'.  
			$$
			
			Finally we obtain the following complexity
			of inexact SHCPB:
			
			\begin{theorem}\label{complinexact}
				\vg{}{
					Let $0 \leq \Delta<+\infty$
					(resp., $0 \leq \Delta'<+\infty$)
					be an upper bound
					on the sequence of error terms $\Delta_j$ (resp., $\Delta_j'$).
					Let $\bar t =(1-\tau)\bar T_1
					+\tau \bar T_2$ now with
					$$
					\begin{array}{l}
						\bar T_1 = M^2+\frac{\sigma^2}2 + 2(L+3) \left[\max\{1,4\lam^2 L^2\}3d_0^2 + 8 \lam^2  M^2 + 2 \lam^2 \sigma^2\right]+3 \Delta' + \Delta  \mbox{ and}\\
						\bar T_2:=\lam [M_h^2 + \sigma^2 + 2\|f'(x_0)\|^2 + 8M^2+8(\sqrt{2}+1)MLd_0 + 2(2\sqrt{2}+3)d_0^2L^2]+\Delta +
						\Delta'.
					\end{array}
					$$
					Then the mean number of
					iterations of SHCPB before it
					computes an approximate optimal value
					$\E[{\hat \Phi}_R]$
					satisfying $\E[{\hat \Phi}_R] - \phi^* \le \varepsilon/2$ is} 
				\[
				{\cal O} \left(\left( \left[ 1+  \lam_\mu \left(L+\frac{16 M^2 + 4 \sigma^2}{\varepsilon} \right)\right]  \log\left( \frac{ \bar t}{\varepsilon}\right) + 1 \right) \left[\left \lfloor \min \left\lbrace \frac{2 d_0^2}{\lam \varepsilon}, \frac{1+\lam \mu}{\lam \mu} \log \left( \frac{2 \mu d_0^2}{\varepsilon} + 1 \right) \right\rbrace \right \rfloor  + 1\right] \right).
				\]
			\end{theorem}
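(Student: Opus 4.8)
\noindent The plan is to follow the three-stage template of Corollary~\ref{cor:cmplx} --- bound the number of cycles, bound the expected length of a single cycle, then multiply --- but now carrying the two inexactness error sequences $\{\Delta_j\}$ (subgradient error) and $\{\Delta'_j\}$ (function-value error) through every estimate.

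First I would fix the number of cycles. Theorem~\ref{theocompl2}(b) already shows that running SHCPB for $K\ge K_0$ cycles, with
\[
K_0=\left\lfloor \min\left\{\frac{2d_0^2}{\lam\bar\varepsilon},\ \frac{1+\lam\mu}{\lam\mu}\log\!\left(\frac{2\mu d_0^2}{\bar\varepsilon}+1\right)\right\}\right\rfloor+1,
\]
guarantees $\E[\hat\Phi_R]-\phi^*\le\bar\varepsilon/2$. Hence it only remains to bound the expected number of inner iterations performed within one cycle by $\bigl[1+\lam_\mu(L+(16M^2+4\sigma^2)/\bar\varepsilon)\bigr]\log(\bar t/\bar\varepsilon)+\mathcal O(1)$ and then take the product of the two quantities.

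Second I would bound the length $n_k+1$ of the $k$-th cycle. A cycle terminates once the expected gap $t_j=\Phi_j^\lam-\Gamma_j^\lam(x_j)$ has been pushed down to order $\bar\varepsilon$: by the recursion $t_{j+1}\le\tau t_j+(1-\tau)b_j$ of Lemma~\ref{lem:tj2}(a) together with $\E[b_j]\le\bar\varepsilon/4+3\Delta'_j+\Delta_j$ from Lemma~\ref{lem:tj2}(b), unrolling over the cycle exactly as in Lemma~\ref{lem:rsk2} yields $\E[t_{N_k}]\le\bar\varepsilon/2+\max_j(3\Delta'_j+\Delta_j)$, so the cycle ends as soon as $\tau^{n_k}\E[t_{i_k}]$ is of order $\bar\varepsilon$. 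This is precisely where the bound $\E[t_{i_k}]\le\bar t=(1-\tau)\bar T_1+\tau\bar T_2$ enters, the inexact analogue of Lemma~\ref{lem:t1-2}. To obtain it I would split $t_{i_k}=u_{i_k}-\Gamma_{i_k}^\lam(x_{i_k})$ into its new-sample part (weight $1-\tau$) and its previous-prox-center part (weight $\tau$), and bound each using Lemma~\ref{lem:101i}, assumptions (A2)--(A5), the variance bound (A4'), the function-value inexactness (I2) (so that $\Delta'_j\le\Delta'$ and $\Delta_j\le\Delta$), and the elementary estimate on the prox displacement $\|x_j-x_j^c\|$ after a single proximal step with a linear-plus-$h$ model; a recursive control of $\|x_j-x_0\|$ then produces the $\max\{1,4\lam^2L^2\}3d_0^2$-, $MLd_0$- and $L^2d_0^2$-type terms appearing in $\bar T_1,\bar T_2$. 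Combining this with the $\ln\tau^{-1}\ge1-\tau$ argument from the proof of Theorem~\ref{thm:cmplx1}(a) gives $n_k+1\le\tfrac{1}{1-\tau}\log(\bar t/\bar\varepsilon)+\mathcal O(1)$.

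Third, I would insert the prescribed values of $\lam$ and $\tau$ --- equivalently, the identity $\tfrac{1}{1-\tau}=1+\lam_\mu\bigl(L+(16M^2+4\sigma^2)/\bar\varepsilon\bigr)$, where $\lam_\mu:=\lam/(1+\lam\mu)$ is the effective prox stepsize under $\mu$-strong convexity --- so that the per-cycle bound becomes $\bigl[1+\lam_\mu(L+(16M^2+4\sigma^2)/\bar\varepsilon)\bigr]\log(\bar t/\bar\varepsilon)+\mathcal O(1)$; multiplying by $K_0$ from the first step produces the claimed $\mathcal O(\cdot)$ expression. The step I expect to be the main obstacle is the second one, namely pinning down the explicit constants $\bar T_1$ and $\bar T_2$ in $\bar t$: this requires a careful recursive bound on $\|x_j-x_0\|$, correct handling of the fact that $F(\cdot,\xi)$ need not be convex, and disciplined propagation of both error sequences $\Delta_j$ and $\Delta'_j$ through the conditional expectations, in the spirit of Lemmas~\ref{lem:101i}--\ref{lem:rsk2}. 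The remaining steps are essentially a transcription of the exact-case arguments.
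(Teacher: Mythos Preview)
Your proposal is correct and follows essentially the same route as the paper: the paper's own proof sketch likewise combines Theorem~\ref{theocompl2} (for the cycle count $K_0$) with Lemma~\ref{lem:rsk2} (for the per-cycle recursion on $t_j$), and obtains the bound $\E[t_{i_k}]\le\bar t$ by replaying the exact-case argument of Lemma~\ref{lem:t1} in the Appendix with the inexactness terms carried along. The one ingredient you describe only implicitly is that the recursive control of $\|x_j-x_0\|$ needed for the constants in $\bar T_1,\bar T_2$ is not redone from scratch in the paper but is quoted from Lemma~A.1(b) of \cite{liang2020proximal}, which remains valid here because Lemma~\ref{lem:stoch-outer2} (on which it rests) holds verbatim in the inexact setting.
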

			The proof of Theorem
			\ref{complinexact}  
			follows from Theorem
			\ref{theocompl2} and
			Lemma \ref{lem:rsk2}.
			In this proof, the bound
			$\bar t$
			on
			$\mathbb{E}[\bar t_k]$
			is easily obtained
			following the
			proof of the bound
			on $\mathbb{E}[\bar t_k]$
			given in Lemma \ref{lem:t1} in
			the Appendix for the exact case.
			In particular, we
			observe that in this proof
			we can still use
			Lemma A.1-b) of \cite{liang2020proximal} since Lemma \ref{lem:stoch-outer2}
			on which Lemma A.1-b) of \cite{liang2020proximal}
			is based holds both
			in the exact and inexact cases.
			
		}\fi

		\section{Proofs of main results in Sections~\ref{sec:SCPB1} and \ref{sec:SCPB2}}\label{sec:proofth2}
		
		This section contains three subsections.
		The first one presents some technical results that apply to the SCPB scheme regardless of how
		the index
		$j_k$ is chosen in step 2.
		The second and third ones are then devoted to
		the proofs of Theorems~\ref{thm:main1} and \ref{thm:main2}, respectively. 
		
		\subsection{Proofs of 
			some technical results}\label{subsec:proof}

		
		
		We assume Assumption \ref{assump:basic} holds throughout this subsection.
		Recall that
		for every $j \geq 0$
		\[
		\xi_{[j]}=(\xi_0,\xi_1,\ldots,\xi_{j})
		\]
		and for $p \leq q$ positive
		integers we denote by
		$\xi_{[p:q]}$ the portion
		$\xi_{[p:q]}=(\xi_p,\xi_{p+1},\ldots,\xi_q)$
		of realizations of the r.v.\ $\xi$
		over the iterations 
		$p,p+1,\ldots,q$.
		For convenience, in what follows we set 
		\begin{equation}\label{def:sj}
			s_j := s(x_j,\xi_j).
		\end{equation}
		For every $k \ge 1$ and $j \in {\cal C}_k$, define
		\begin{align} \label{def:u}
			u_j :=  \left\{\begin{array}{ll}
				\Phi(x_{i_k},\xi_{i_k}), & \text { if } j=i_{k} , \\ 
				(1-\tau) 
				\phi(x_{j}) + \tau u_{j-1},  & \text { otherwise},
			\end{array}\right.
		\end{align}	
		and
		\begin{align} \label{eq:Gamma}
			\Gamma_{j}(\cdot) :=  \left\{\begin{array}{ll}
				\tilde \ell_k (\cdot) + h(\cdot), & \text { if } j=i_{k} , \\ 
				(1-\tau)
				\ell(\cdot;x_{j-1},\xi_{j-1}) +  \tau \Gamma_{j-1}(\cdot) ,  & \text { otherwise},
			\end{array}\right.
		\end{align}	
		where $\Phi(\cdot,\xi)$ and $\ell(\cdot;x,\xi)$ are as in \eqref{def:Phi} and and $\tilde \ell_k (\cdot)$ is as in \eqref{def:tl}.
		It is easy to see from \eqref{eq:Sj}, \eqref{def:xj}, and the above definition of $\Gamma_j$ that
		\begin{equation}\label{eq:xj}
			x_{j} =\underset{u\in \R^n}\argmin
			\left\lbrace  
			\Gamma_{j}^\lam(u):=
			\Gamma_{j}(u) +\frac{1}{2\lam}\|u- x_{j}^c \|^2 \right\rbrace.  
		\end{equation}

		The first result below
		provides some basic relations which are often
		used in our analysis.
		
		\begin{lemma} \label{lemfirstserious} For every
			$j \geq 1$, we have
			\begin{align}
				\E[\Phi(x_j,\xi_j)] &= \E[\phi(x_j)], \label{eq:exp}\\
				\E[\phi( y_j)] &\le \E[u_j], \label{ineq:basic1} \\
				\E[\Gamma_j(x)]  &\le \phi(x) \quad \forall x \in \dom h. \label{ineq:basic2}
			\end{align}
		\end{lemma}
		
		\begin{proof}
			Observe that $x_j$
			is a function of $\xi_{[j-1]}$
			and not of $\xi_j$. Hence, $x_j$ is independent of
			$\xi_j$ in view of the fact that
			$\xi_j$ is chosen in step 1 of SCPB to be independent of $\xi_{[j-1]}$.
			Using the relation 
			$f(x)=\E[F(x,\xi)]$ (see (A2)), it follows that
			$$
			\begin{array}{lcl}
				\mathbb{E}[\Phi(x_j,\xi_j)]&=&
				\mathbb{E}_{
					\xi_{[j]}}
				[F(x_j,\xi_j)+h(x_j)]=
				\mathbb{E}_{\xi_{[j-1]}}[
				\mathbb{E}_{\xi_{j}}[
				F(x_j,\xi_j)+h(x_j)|\xi_{[j-1]}]]\\
				& = &\mathbb{E}_{\xi_{[j-1]}}[
				f(x_j)+h(x_j)] = \mathbb{E}[\phi(x_j)],
			\end{array}
			$$
			which is identity \eqref{eq:exp}. It then suffices to show that,
			for any given $k \ge 1$,
			\eqref{ineq:basic1} and \eqref{ineq:basic2} hold for every $j$ in the $k$-th cycle, i.e., $j \in \mathcal{C}_k$.
			We show this by induction on $j$ where
			$j$ is the iteration count. If $j=i_k$, then
			it follows from
			\eqref{def:yj}, \eqref{def:u}, and \eqref{eq:exp} that
			\[
			\E[u_j] \overset{\eqref{def:u}}{=}
			\E[\Phi(x_{j},\xi_{j})]
			\overset{\eqref{eq:exp}}{=}
			\E[\phi(x_{j})] 
			\overset{\eqref{def:yj}}{=} \E[\phi(y_j)],
			\]
			and from \eqref{eq:Gamma} with $j=i_k$, \eqref{def:tl}, and Assumption \ref{assump:basic} (A1)-(A2) that for every $x\in \dom h$,
			\[
			\E[\Gamma_j(x)] \overset{\eqref{eq:Gamma}}{=} \E[\tilde \ell_k(x)+h(x)] \overset{\eqref{def:tl}, (A2)}{=} f(x_{i_k-1}) + \inner{f'(x_{i_k-1})}{x-x_{i_k-1}} +h(x) \overset{(A1)}{\le}
			\phi(x).
			\]
			Let $j$ be such that $j>i_k$
			and \eqref{ineq:basic1} and \eqref{ineq:basic2} hold
			for $j$. Then, it follows
			from \eqref{def:yj}, \eqref{def:u}, the fact that \eqref{ineq:basic1} holds for $j$, and the convexity of $\phi$, that
			\[
			\E[u_{j+1}]  \overset{\eqref{def:u},\eqref{ineq:basic1}}{\ge}  (1-\tau) \E[\phi(x_{j+1})] + \tau \E[\phi(y_j)]
			\ge \E[\phi((1-\tau)x_{j+1} + \tau y_j)]  \overset{\eqref{def:yj}}{=} \E[\phi(y_{j+1})],
			\]
			and from \eqref{eq:exp0}, \eqref{eq:Gamma} and the fact that \eqref{ineq:basic2} holds for $j$, that
			\[
			\E[\Gamma_{j+1}(x)] \overset{\eqref{eq:Gamma}}{=} \tau \E[\Gamma_j(x)] + (1-\tau) \E[\ell(x;x_{j},\xi_{j})] 
			\overset{\eqref{eq:exp0}, \eqref{ineq:basic2}}{\le} \tau \phi(x) + (1-\tau) \phi(x) = \phi(x).
			\]
			We have thus shown that
			\eqref{ineq:basic1} and \eqref{ineq:basic2} hold for every $j \in {\cal C}_k$.
		\end{proof}

		It is worth noting that
		the proof of \eqref{ineq:basic2} is strongly based on the fact  that
		$\Gamma_j$ is a convex combination of affine functions whose expected values are underneath $\phi$. Moreover, this inequality would not necessarily be true if $\Gamma_j$ were
		for example the maximum of
		functions as just described.
		
		The next result provides a useful estimate for the quantity $\phi(x_j,\xi_j) - \ell(x_{j};x_{j-1},\xi_{j-1})$.
		
		\begin{lemma}\label{lem:101}
			For every $j\in {\cal C}_k$ such that $j\ge i_k$, we have:
				\begin{equation}\label{ineq:Phi}
					\phi(x_j) - \ell(x_{j};x_{j-1},\xi_{j-1})
					\le (\bar M + \|s_{j-1}\|) \|x_{j}-x_{j-1}\|.
				\end{equation}
		\end{lemma}
		
		\begin{proof}
			Using the definitions of $\phi$ and $\ell(\cdot;x,\xi)$ in \eqref{eq:ProbIntro} and \eqref{def:Phi}, respectively, we have
			\[
			\phi(x_j) - \ell(x_{j};x_{j-1},\xi_{j-1})
			= f(x_{j}) - f(x_{j-1}) - \inner{s_{j-1}}{x_{j}-x_{j-1}} \le \inner{f'(x_j) - s_{j-1}}{x_j-x_{j-1}} 
			\]
			where the inequality is due to the convexity of $f$.
			The above inequality, the Cauchy-Schwarz inequality, the triangle inequality and \eqref{ineq:fp} then imply \eqref{ineq:Phi}.
		\end{proof}

		The technical result below introduces a key quantity, namely, scalar $t_j$ below,  and
		provides a useful recursive 
		relation for it over the iterations of the
		$k$-th cycle. This recursive
		relation will then be used in Proposition \ref{prop:rsk}
		to show that the $t_{j}$ at the end of the $k$-th
		cycle, namely $t_{j_k}$, is relatively small in expectation.

		\begin{lemma}\label{lem:102}
			For every $j \ge 1$, define
			\begin{equation}
				t_j:=u_j-\Gamma_j^\lam(x_j), \quad
				b_{j+1}:= \frac{ \lam (\bar M^2 + \|s_{j}\|^2)}{\theta K}
				\label{def:delta}
			\end{equation}
			where $\lambda$, $\theta$, and $K$
			are as in step 0 of SCPB, and $s_j$ is as in \eqref{def:sj}.
			Then,  for every $j\in {\cal C}_k$ such that $j\ge i_{k}+1$, we have
			\begin{equation}\label{ineq:tj}
				t_{j} \le \tau t_{j-1} + (1-\tau) b_j
			\end{equation}
			where $\tau$ is as in \eqref{def:tau},
			and hence
			\begin{equation} \label{ineq:recursive}
				t_j  \le 
				\tau^{j-i_k} t_{i_k} + (1-\tau)
				\sum_{i=i_{k}+1}^j \tau^{j-i} b_i.
			\end{equation}
		\end{lemma}


		\begin{proof} 
			Let $j \in \mathcal{C}_k$ with
			$j \geq i_k+1$ be given.
			It follows from the definitions of $\Gamma_{j}$ and $\Gamma_{j}^\lam$ in \eqref{eq:Gamma} and \eqref{eq:xj}, respectively, that
			\begin{align}
				\Gamma_{j}^\lam(x_{j})& =  (1-\tau) \ell(x_{j};x_{j-1},\xi_{j-1}) + \tau \Gamma_{j-1}(x_{j})+\frac{1}{2\lambda}\|x_j-x_j^c\|^2 \nn \\
				& \ge (1-\tau) \ell(x_{j};x_{j-1},\xi_{j-1}) + \tau \left[\Gamma_{j-1}(x_{j})+ \frac{1}{2\lambda}\|x_j-x_{j-1}^c\|^2 \right]\nn \\
				& = (1-\tau) \ell(x_{j};x_{j-1},\xi_{j-1}) + \tau \Gamma_{j-1}^\lam(x_{j}) \nn \\
				&\ge (1-\tau) \ell(x_{j};x_{j-1},\xi_{j-1}) + \tau \left[\Gamma_{j-1}^\lam(x_{j-1}) + \frac{1}{2\lam}\|x_{j}-x_{j-1}\|^2\right], \label{ineq:Gamma}
			\end{align}
			where for the first inequality we used the fact
			that $\tau < 1$ and $x_j^c=x_{j-1}^c$
			for $j \in \mathcal{C}_k$ with $j \geq i_k+1$
			while for
			the second inequality is due to the facts that $\Gamma_j^\lam$ is $(1/\lam)$-strongly convex and 
			$x_{j-1}$ is the minimizer of $\Gamma_{j-1}^\lam$ (see \eqref{eq:xj}).
			Using \eqref{def:tau}, \eqref{ineq:Phi} and \eqref{ineq:Gamma}, 
			we have
			\begin{align*}
				& \Gamma_{j}^\lam(x_{j})  - \tau \Gamma_{j-1}^\lam(x_{j-1}) \overset{\eqref{def:tau}, \eqref{ineq:Gamma}}{\ge}  (1-\tau) \left[ 
				\ell(x_{j};x_{j-1},\xi_{j-1}) + \frac{\theta K}{2\lam} \|x_j-x_{j-1}\|^2 \right] \\
				&\overset{\eqref{ineq:Phi}}{\ge} (1-\tau)\phi(x_{j}) + (1-\tau) \left[\frac{\theta K}{2\lam} \|x_{j}-x_{j-1}\|^2 - (\bar M + \|s_{j-1}\|) \|x_{j}-x_{j-1}\|\right] \\
				&\ge  (1-\tau) \phi(x_j) - (1-\tau) \frac{ \lam(\bar M + \|s_{j-1}\|)^2}{2\theta K}
			\end{align*}
			where the last inequality
			is obtained by minimizing
			its left hand side with respect to $\|x_j-x_{j-1}\|$.
			The above inequality, the fact that $(\alpha_1+\alpha_2)^2\le 2\alpha_1^2+2\alpha_2^2$ for every $\alpha_1,\alpha_2\in \R$, and the definition of $b_j$ in \eqref{def:delta} imply that
			\[
			\Gamma_{j}^\lam(x_{j})  - \tau \Gamma_{j-1}^\lam(x_{j-1}) \ge  (1-\tau) \phi(x_j)  - (1-\tau) \frac{ \lam(\bar M^2 + \|s_{j-1}\|^2)}{\theta K}
			\overset{\eqref{def:delta}}{=} (1-\tau) \phi(x_j) - (1-\tau) b_{j}.
			\]
			Rearranging the above
			inequality and using the definition of $t_j$ in
			\eqref{def:delta}, identity
			\eqref{def:u}, and the fact that $j\ge i_k+1$, we then conclude that
			$$
			\begin{array}{lcl}
				\Gamma_{j}^\lam(x_{j}) + (1-\tau)b_j &\ge &
				\tau \Gamma_{j-1}^\lam(x_{j-1}) + (1-\tau) \phi(x_j) \\
				& \overset{\eqref{def:delta}}{=} &
				\tau (u_{j-1} - t_{j-1}) + (1-\tau) \phi(x_j)
				\overset{\eqref{def:u}}{=} u_j - \tau t_{j-1},
			\end{array}
			$$
			which, in view of the definition of
			$t_j$ in \eqref{def:delta}, implies \eqref{ineq:tj}. 
			Inequality \eqref{ineq:recursive} follows immediately  from \eqref{ineq:tj}
			and an induction argument.
		\end{proof}
		
		
		
		
		The following technical result provides some useful bounds on $b_j$.

		\begin{lemma}\label{lem:tj}
				For every $\ell \ge 0$ and $j\ge \ell +2$, we have
				\begin{equation}\label{ineq:bj}
					\E[b_j \, | \xi_{[\ell]} ] \le \frac{2\lam \bar M^2}{\theta K}, \quad \E[ b_j ] \le \frac{2\lam \bar M^2}{\theta K}.
				\end{equation}
			\end{lemma}
			\begin{proof}
				We first show that for every $j\ge 1$ and $\ell \le j-1$,
				\begin{equation}\label{ineq:basic}
					\E[\|s_j\|^2  \, | \, \xi_{[\ell]} ] \le \bar M^2.
				\end{equation}
				Fix $j\ge 1$. Since $x_j$ becomes deterministic when 
				$\xi_{[j-1]}$ is given,
				it follows from Assumption \ref{assump:basic} (A3) with $x=x_j$ and the definition of $s_j$ in \eqref{def:sj}
				that
				\[
				\E_{\xi_j}[\|s_j\|^2 \, | \, \xi_{[j-1]} ] \le \bar M^2.
				\]
				Now, if $\ell \le j-2 $, then
				the above relations together with the law of total expectation imply that
				and
				\[
				\E[\|s_j\|^2 \, | \, \xi_{[\ell]} ]
				=
				\E_{\xi_{[\ell+1:j]}}
				[\|s_j\|^2 \, | \, \xi_{[\ell]} ]
				=
				\E_{\xi_{[\ell+1:j-1]}} [\E_{\xi_j}[\|s_j\|^2 \, | \, \xi_{[j-1]} ] 
				] \le \bar M^2.
				\]
				We have thus shown that 
				\eqref{ineq:basic} holds for any $\ell \le j-1$.

				The first inequality in \eqref{ineq:bj} then follows from the definition of $ b_j$ in \eqref{def:delta}. The second inequality in \eqref{ineq:bj} follows from the first one and the law of total expectation.
			\end{proof}
			
			The next technical result provides a
			bound on the initial $t_j$ for the $k$-th cycle, namely $t_{i_k}$, in expectation.
			
			\begin{lemma}\label{lem:t1-2}
				For every $k \ge 1$, we have
				$\E[t_{i_k}]\le 2 \min\{\lam \bar M^2, \bar M D_h\}$
				where $i_k$ and $t_j$ are as in \eqref{def:Ck} and \eqref{def:delta}, respectively.
			\end{lemma}
			\begin{proof} Let
				\begin{equation}\label{def:error}
					\Delta_j
					= \Phi(x_{j},\xi_{j})-\phi(x_{j})
					= F(x_{j},\xi_{j})-f(x_{j}).
				\end{equation}
				Using the definitions of $t_j$ and $\Gamma_j^\lam$ in \eqref{def:delta}  and \eqref{eq:xj}, respectively, \eqref{def:u} with $j=i_k=j_{k-1}+1$ (see \eqref{def:Ck}), we have
				\begin{align}
					t_{i_k} & \stackrel{\eqref{def:delta}}{=} u_{i_k} - \Gamma_{i_k}^{\lam}(x_{i_k}) \nn \\ &\stackrel{\eqref{def:u},\eqref{eq:Gamma}}{=} \Phi(x_{i_k},\xi_{i_k}) - \left[F(x_{j_{k-1}},\xi_{j_{k-1}}) + \inner{s_{j_{k-1}}}{x_{i_k}-x_{j_{k-1}}} + h(x_{i_k})\right] - \frac{1}{2\lam} \|x_{i_k}-x_{j_{k-1}}\|^2 \nn \\
					& = \Delta_{i_k} - \Delta_{j_{k-1}} + \phi(x_{i_k}) - \ell(x_{i_k};x_{j_{k-1}},\xi_{j_{k-1}}) - \frac{1}{2\lam} \|x_{i_k}-x_{j_{k-1}}\|^2 \nn \\
					&\le \Delta_{i_k} - \Delta_{j_{k-1}}
					+ \left(\bar M+\|s_{j_{k-1}}\| \right) \|x_{i_k}-x_{j_{k-1}}\| - \frac{1}{2\lam} \|x_{i_k}-x_{j_{k-1}}\|^2 \label{eq:tik}
				\end{align}
				where the inequality is due to Lemma \ref{lem:101}.
				Maximizing the right hand side
				of the last inequality above with
				respect to $\|x_{i_k}-x_{j_{k-1}}\|$
				and using the relation
				$(a+b)^2\le 2a^2+2b^2$ for every $a, b \in \R$, we obtain
				\begin{equation}
					t_{i_k} 
					\le \Delta_{i_k} - \Delta_{j_{k-1}} + \frac{\lam}2  \left(\bar M+\|s_{j_{k-1}}\| \right)^2 
					\le \Delta_{i_k} - \Delta_{j_{k-1}} + \lam \left(\bar M^2+\|s_{j_{k-1}}\|^2  \right). \label{ineq:t1}
				\end{equation}
				Moreover, \eqref{eq:tik} and the fact that $\|x_{i_k}-x_{j_{k-1}}\|\le D_h$ also imply that
				\begin{equation}\label{ineq:t1-1}
					t_{i_k} \le  \Delta_{i_k} - \Delta_{j_{k-1}}
					+ \left(\bar M+\|s_{j_{k-1}}\| \right) D_h.
				\end{equation}
				It follows from \eqref{def:sj}, \eqref{def:error}, and Assumption \ref{assump:basic} (A2) and (A3) that 
				\[
				\E[\Delta_{i_k}]=0, \quad \E[\Delta_{j_{k-1}}]=0, \quad \E[\|s_{j_{k-1}}\|^2] \le \bar M^2.
				\]
				Hence, the lemma follows by taking expectations of \eqref{ineq:t1} and \eqref{ineq:t1-1}
				and using the above three relations.
			\end{proof}

			We emphasize that
			all results developed in this subsection hold
			regardless of the way $j_k$ is chosen in
			step~2. On the other hand, the results in the following two subsections strongly use the fact that $j_k$ is chosen according to either (B1) or (B2).
			
			\subsection{Proof of 
				Theorem \ref{thm:main1}}\label{prooffirstth}
			
			This subsection is devoted to the proof of Theorem \ref{thm:main1}.
			The following result derives a bound in expectation
			for $t_{j_k}$  when $j_k$ is chosen according to cycle rule (B1).
			
			\begin{proposition}\label{prop:rsk}
				In addition to
				Assumption \ref{assump:basic}, assume also that (B1) holds.
				Then, for every $k \ge 1$, we have
				
				\begin{equation}\label{ineq:expectation}
					\mathbb{E}[t_{j_k}]\le \frac{2 R \min\{\lam \bar M^2, \bar M D_h\}}{\lam k} + \frac{2\lam \bar M^2}{\theta K} 
				\end{equation}
				where $t_j$ is as in \eqref{def:delta}.   
			\end{proposition}
			\begin{proof}
				Fix $k\ge 1$. 
				It follows from cycle rule (B1)  and inequality \eqref{ineq:recursive} with $j=j_k$ that
				\begin{equation}\label{ineq:tjk}
					t_{j_k} 
					\le \tau^{j_k-i_k}t_{i_k} + (1-\tau)
					\sum_{i=i_{k}+1}^{j_k} \tau^{j_k-i} b_i.
				\end{equation}    
				In view of (B1) and \eqref{def:Ck}, it follows that $j_k$ and $i_k$ are both deterministic.
				Hence, taking expectation of the above inequality and using the last inequality in \eqref{ineq:bj}, cycle rule (B1), and Lemma \ref{lem:t1-2},  we conclude that 
				\begin{align*}
					\E[t_{j_k}] 
					&\le \tau^{j_k-i_k} \E[t_{i_k}] + (1-\tau)
					\sum_{i=i_{k}+1}^{j_k} \tau^{j_k-i} \E[b_i]\\
					&\le \frac{2 R \min\{\lam \bar M^2, \bar M D_h\}}{\lam k}  + (1-\tau) \frac{2\lam \bar M^2}{\theta K}  \sum_{i=i_{k}+1}^{j_k} \tau^{j_k-i},
				\end{align*}
				and hence that \eqref{ineq:expectation} holds.
			\end{proof}

			
			It is worth noting that rule (B1) plays an important role in showing that the expectation of the first term on the right-hand side of \eqref{ineq:tjk}
			is ${\cal O}(1/k)$.
			On the other hand, the proof of the ${\cal O}(1/K)$ bound for the expectation
			of the second term on the right-hand side of \eqref{ineq:tjk}  does not depend on rule (B1) but on the fact that  the expectation of
			$b_j$ is small, namely, ${\cal O}(1/K)$ (see \eqref{ineq:bj} and its definition in \eqref{def:delta}). In conclusion, rule (B1) provides a way to estimate the magnitude of $\E[t_{j_k}]$,
			a quantity
			which by itself
			is intractable to compute exactly.
			
			In the remaining part of this subsection, we analyze the behavior of
			the ``outer" sequence of iterations
			$\{\hat y_k\} = \{y_{j_k}\} \subset \R^n$
			generated in step 2 of SCPB.
			For this purpose, define 
			\begin{equation}\label{def:Gammak}
				\hat \Gamma_k:=\Gamma_{j_k} \quad \forall k\ge 1
			\end{equation}
			and
			\begin{equation}
				\hat x_k:=x_{j_k}, \quad \hat u_k:=u_{j_k}. 	      \label{not2}
			\end{equation}
			
			In what follows, we make some
			remarks about the above ``outer" sequences
			which follow as immediate consequences
			of the results developed above.
			In view of the above definitions, relation \eqref{eq:xj} with $j=j_k$,
			and the way the prox-centers
			$x_j^c$ are updated in \eqref{def:xc}, we have that
			\begin{equation}\label{def:zk}
				\hat x_k =\underset{x\in  \R^n}\argmin
				\left\lbrace \hat\Gamma_k(x) +\frac{1}{2\lam}\|x- \hat x_{k-1} \|^2 \right\rbrace \quad \forall k \ge 1.
			\end{equation}
			Moreover, it follows from \eqref{ineq:basic1} and \eqref{ineq:basic2} with $j=j_k$ that
			\begin{equation}\label{ineq:uk}
				\E[\phi(\hat y_k)] \le \E[\hat u_k]
			\end{equation}
			and
			\begin{equation}\label{ineq:Gammak}
				\E[\hat \Gamma_k(z)] \le \phi(z) \quad \forall z\in \dom h.
			\end{equation}
			
			The following result describes an important
			recursive formula for the outer sequence
			$\{\hat y_k\}$ generated by SCPB.
			
			\begin{lemma}\label{lem:stoch-outer}
				In addition to
				Assumption \ref{assump:basic}, assume also that (B1) holds.
				Then, for every $k \ge 1$ and $z\in \dom h$, we have
				\[
				\frac{2 R \min\{\lam \bar M^2, \bar M D_h\}}{\lam k} + \frac{2 \lam \bar M^2}{\theta K} +\frac{1}{2\lam}[d_{k-1}(z)]^2 - \frac{1}{2\lam}[d_{k}(z)]^2 \ge \E[\phi(\hat y_k)] - \phi(z)
				\]
				where
				\begin{equation}\label{def:dk}
					d_k(z) : =\left( \E[\|\hat x_k - z\|^2  ] \right) ^{1/2}.
				\end{equation}
			\end{lemma}
			\begin{proof}
				First observe that \eqref{def:Gammak}, \eqref{not2}, and the definitions of $\Gamma_j^\lam$ and $t_j$ in \eqref{eq:xj} and \eqref{def:delta}, respectively, imply that \eqref{ineq:expectation} is equivalent to
				\begin{equation}\label{ineq:key}
					\E\left[\hat u_k - \hat \Gamma_k(\hat x_k)-\frac{1}{2\lam} \|\hat x_k-\hat x_{k-1}\|^2\right] \le \frac{2 R \min\{\lam \bar M^2, \bar M D_h\}}{\lam k} + \frac{2 \lam \bar M^2}{\theta K}.
				\end{equation}
				It follows from \eqref{def:zk} and the fact that the objective function of \eqref{def:zk} is $(1/\lam)$-strongly convex that for every $z\in \dom h$,
				\[
				\hat \Gamma_k(\hat x_k) + \frac1{2\lam}\|\hat x_k-\hat x_{k-1}\|^2 
				\le \hat \Gamma_k(z) + \frac1{2\lam}\|z-\hat x_{k-1}\|^2
				- \frac{1}{2 \lam}\|z-\hat x_k\|^2,
				\]
				and hence that
				\[
				\hat u_k - \hat \Gamma_k(\hat x_k) -  \frac1{2\lam}\|\hat x_k-\hat x_{k-1}\|^2 +\frac1{2\lam}\|\hat x_{k-1}-z\|^2
				\ge \hat u_k - \hat \Gamma_k(z) + \frac{1 }{2 \lam}\|\hat x_k - z\|^2. 
				\]
				Taking expectation of the above inequality and using \eqref{def:dk} and \eqref{ineq:key}, we conclude that
				\[
				\frac{2 R \min\{\lam \bar M^2, \bar M D_h\}}{\lam k} + \frac{2\lam \bar M^2}{\theta K} +\frac1{2\lam}[d_{k-1}(z)]^2
				\ge \E[\hat u_k] - \E[\hat \Gamma_k(z)] + \frac{1 }{2 \lam}[d_k(z)]^2
				\]
				which, in view of \eqref{ineq:uk} and \eqref{ineq:Gammak},
				immediately implies the
				conclusion of the lemma.
			\end{proof}
			
			\vgap

			We are now in a position to prove
			Theorem \ref{thm:main1}.
			
			\vgap

			\noindent
			{\bf Proof of Theorem \ref{thm:main1}:}
			a) This statement directly follows from \eqref{def:tau}, cycle rule (B1), the definition of $\ln_0^+$, and the facts that $|{\cal C}_k| = j_k - i_k + 1$ and $\ln \tau^{-1} \ge 1-\tau$.

			

			


			
			b)      Using the definition of 
			$\hat y_K^a$ in \eqref{eq:output},
			Lemma \ref{lem:stoch-outer} with $z=x^*\in X^*$,
			and the facts that
			$\lceil K/2\rceil \ge K/2$ and
			\[
			\sum_{k=\lfloor K/2 \rfloor + 1}^K \frac1k \le \int_{\lfloor K/2 \rfloor}^K \frac1x dx =\ln \frac{K}{\lfloor K/2 \rfloor} \le \ln \frac{K}{K/4} =\ln 4 \le \frac32 \quad \forall K\ge 2,
			\]
			we then conclude that
			for every $K\ge 2$,
			\begin{align}
				&\E[\phi(\hat y_K^a)] - \phi_* 
				\leq \frac1{\lceil K/2 \rceil}\sum_{k=\lfloor K/2 \rfloor + 1}^K (\E[\phi(\hat y_k)]- \phi_*) \nn \\
				&\le \frac1{\lceil K/2 \rceil}\sum_{k=\lfloor K/2 \rfloor + 1}^K \left(\frac{2 R \min\{\lam \bar M^2, \bar M D_h\}}{\lam k}+ \frac{2\lam \bar M^2}{\theta K} +\frac1{2\lam}[d_{k-1}(x^*)]^2 -\frac{1}{2\lam}[d_k(x^*)]^2 \right) \nn \\
				& \le \frac{6 R \min\{\lam \bar M^2, \bar M D_h\}}{\lam K} + \frac{2\lam \bar M^2}{\theta K} + \frac{\left[d_{\lfloor K/2 \rfloor}(x^*)\right]^2}{\lam K} \label{ineq:key1}
			\end{align}
			where the first inequality is due to the convexity of $\phi$.
			It is also easy to see from Lemma \ref{lem:stoch-outer} that \eqref{ineq:key1} holds for $K=1$.
			Then \eqref{ineq:consequence} follows from \eqref{ineq:key1} and the fact that $d_{\lfloor K/2 \rfloor}(x^*)\le D_h$.
			{\tiny \qed}

			The above result strongly uses the fact that $\dom h$ is bounded in view of the last inequality of its proof. 
			
			We end this subsection by
			describing a complexity bound for a slightly modified SCPB1 variant which is derived without assuming that $\dom h$ is bounded.
			We start by describing the
			two changes one needs to make to SCPB1 in order to obtain the aforementioned 
			variant.
			First, instead of  the
			point $\hat y_K^a$ as in \eqref{eq:output},
			it outputs
			\begin{equation}\label{def:bar y}
				\bar y_K^a = \frac1K\sum_{k=1}^K \hat y_k.
			\end{equation}
			Second, instead of computing $j_k$ as in
			(B1), it sets $j_k$
			as the smallest integer greater than or equal to $ i_k$ such that
			$\lam K \tau^{j_k-i_k}\le R$.

			
			\begin{theorem}\label{thm:unbounded}
				Assume that Assumption \ref{assump:basic} holds and let $d_0$ denote the
				distance of the initial
				point $x_0$ to the optimal set $X^*$, i.e.,
				\begin{equation}\label{def:d0}
					d_0 := \|x_0-x_0^*\|, \ \ \mbox{\rm where} \ \ \ 
					x_0^* := \argmin \{\|x_0-x^*\|: x^*\in X^*\}.
				\end{equation}
				Then, the aforementioned SCPB1 variant  satisfies the following statements:
				\begin{itemize}
					\item[a)] the number of iterations within each cycle is bounded by 
					\[
					\left\lceil (\theta K+1)
					\ln_0^+ \left(\frac{\lambda K}{R} \right)\right\rceil + 1;
					\]
					\item[b)] there holds
					\[
					\E[\phi(\bar y_K^a)] - \phi_* \le  \frac1K \left(\frac{d_0^2}{2\lam} + 2 R \bar M^2 +  \frac{2\lam \bar M^2}{\theta}\right).
					\]
				\end{itemize}
			\end{theorem}
			
			\begin{proof}
				(a) The proof of (a) is similar to that of Theorem \ref{thm:main1}(a).
				
				(b) First note that the new way of choosing $j_k$ and slightly different arguments than the ones used in the proofs of Proposition \ref{prop:rsk} and Lemma \ref{lem:stoch-outer} imply that for every $z\in \dom h$,
				\[
				\frac{2 R \bar M^2}{K} + \frac{2 \lam \bar M^2}{\theta K} +\frac{1}{2\lam}[d_{k-1}(z)]^2 - \frac{1}{2\lam}[d_{k}(z)]^2 \ge \E[\phi(\hat y_k)] - \phi(z).
				\]
				Using the fact that $\phi$ is convex and the definition of 
				$\bar y_K^a$ in \eqref{def:bar y}, and summing the above inequality from $k=1$ to $K$, we conclude that for every $z \in \dom h$,
				\begin{align*}
					\E[\phi(\bar y_K^a)] - \phi(z) 
					& \leq \frac1{K}\sum_{k=1}^K [ \E[\phi(\hat y_k)]- \phi(z)] \\
					&\le \frac1K \sum_{k=1}^K \left(\frac{2 R \bar M^2}{ K}+ \frac{2\lam \bar M^2}{\theta K} +\frac1{2\lam}[d_{k-1}(z)]^2 -\frac{1}{2\lam}[d_k(z)]^2 \right) \\
					& \le \frac{2 R \bar M^2}{ K} + \frac{2\lam \bar M^2}{\theta K} + \frac{[d_0(z)]^2}{2\lam K}. 
				\end{align*}
				The statement now follows from the above inequality
				with $z=x_0^*$ where $x_0^*$ is as in \eqref{def:d0}.
			\end{proof}

			\subsection{Proof of Theorem \ref{thm:main2}}\label{subsec:proof2}

			The following result,
			which is an analogue of Proposition \ref{prop:rsk} with cycle rule (B1) replaced by (B2), derives a bound
			on  $t_{j_k}$ in expectation.
			
			\begin{proposition}\label{prop:rsk-1}
				In addition to
				Assumption \ref{assump:basic} holds, assume also that cycle rule (B2) is used. For every $k \ge 1$, we have
				\begin{equation}\label{ineq:rNk}
					\mathbb{E}[t_{j_k}]\le \frac{R}{\lam k} + \frac{2\lam \bar M^2}{\theta K} + \frac{2\lam \bar M^2}{\theta^2 K^2}.
				\end{equation}
			\end{proposition}
			\begin{proof}
				Using \eqref{ineq:recursive} with $j=j_k$, 
				we conclude that 
				\begin{equation}\label{ineq:tjk2}
					t_{j_k} - (1-\tau) \sum_{i=i_k+2}^{j_k} \tau^{j_k-i} b_i 
					\overset{\eqref{ineq:recursive}}{\le} \tau^{j_k-i_k} t_{i_k} + (1-\tau) \tau^{j_k-i_k-1} b_{i_k+1}  
					\overset{(B2)}{\le} \frac{R}{\lam k} + (1-\tau) b_{i_k+1}
				\end{equation}
				where the second inequality is due to cycle rule (B2), the observation that (B2) is equivalent to 
				$\lam k \tau^{j_k-i_k} t_{i_k} \le R$, and $\tau\in (0,1)$ in view of \eqref{def:tau}.
				Noting that $j_k$ becomes
				deterministic once $\xi_{[i_k]}$ is given, taking expectation of
				the above inequality conditioned on $\xi_{[i_k]}$, rearranging the terms,
				and using the first inequality in \eqref{ineq:bj}, we have
				\begin{align*}
					\mathbb{E}\left[t_{j_k}|\xi_{[i_k]}\right]
					- \frac{R}{\lam k} - (1-\tau)\E[b_{i_k+1} |\xi_{[i_k]}]  &\le (1-\tau)\sum_{i=i_k+2}^{j_k} \tau^{j_k-i} \E[ b_i |\xi_{[i_k]}] \\ 
					&\overset{\eqref{ineq:bj}}{\le} (1-\tau) \left(\sum_{i=i_k+2}^{j_k} \tau^{j_k-i} \right) \frac{2\lam \bar M^2}{\theta K}
					\le \frac{2\lam \bar M^2}{\theta K}.
				\end{align*}
				Taking expectation of the above inequality
				with respect to $\xi_{[i_k]}$, rearranging the terms, and using the second inequality \eqref{ineq:bj} and
				the fact that $1-\tau\le 1/(\theta K)$ by \eqref{def:tau}, we conclude that
				\begin{align*}
					\E[t_{j_k}] & \le \frac{R}{\lam k} + (1-\tau)\E[b_{i_k+1} ] + \frac{2\lam \bar M^2}{\theta K} \\
					& \le \frac{R}{\lam k} + \frac1{\theta K} \frac{2\lam \bar M^2}{\theta K} + \frac{2\lam \bar M^2}{\theta K},
				\end{align*}
				and hence that \eqref{ineq:rNk} holds.
			\end{proof}
			
			It is worth noting that rule (B2) plays an important role in showing that the first term on the right-hand side of the \eqref{ineq:tjk2}
			is ${\cal O}(1/k)$.
			
			
			The following result is an analogue of Lemma \ref{lem:stoch-outer}
			with (B1) replaced by (B2).
			
			\begin{lemma}\label{lem:stoch-outer-1}
				In addition to
				Assumption \ref{assump:basic}, assume also that cycle rule (B2) is used.
				Then, for every $z\in \dom h$ and $k\ge 1$, we have
				\[
				\frac{R}{\lam k} + \frac{2\lam \bar M^2}{\theta K}+\frac{2\lam \bar M^2}{\theta^2 K^2} +\frac{1}{2\lam}[d_{k-1}(z)]^2 - \frac{1}{2\lam}[d_{k}(z)]^2 \ge \E[\phi(\hat y_k)] - \phi(z)
				\]
				where $d_k(z)$ is as in \eqref{def:dk}.
			\end{lemma}
			\begin{proof}
				First observe that 
				the definitions of $\Gamma_j^\lam$ and $t_j$ in \eqref{eq:xj} and \eqref{def:delta}, respectively,
				imply that
				\eqref{ineq:rNk} is equivalent to
				\begin{equation}\label{ineq:key-1}
					\E\left[\hat u_k - \hat \Gamma_k(\hat x_k)-\frac{1}{2\lam} \|\hat x_k-\hat x_{k-1}\|^2\right] \le \frac{R}{\lam k} + \frac{2\lam \bar M^2}{\theta K} + \frac{2\lam \bar M^2}{\theta^2 K^2}.
				\end{equation}
				The remaining part of the proof is now
				similar to that of Lemma \ref{lem:stoch-outer} except that
				\eqref{ineq:key-1} is used in place of \eqref{ineq:key}.
			\end{proof}
			
			\vgap 
			
			We are now ready to prove
			Theorem \ref{thm:main2}.
			
			\vgap

			\noindent
			{\bf Proof of Theorem \ref{thm:main2}:}
			a) Using \eqref{def:tau}, \eqref{ineq:example}, the definition of $\ln_0^+$, and the facts that $|{\cal C}_k| = j_k - i_k + 1$ and $\ln \tau^{-1} \ge 1-\tau$, we have
			\[
			|{\cal C}_k| \le \frac{1}{1-\tau} \ln_0^+\left(\frac{t_{i_k} \lam k}{R}\right) + 1 = (\theta K +1) \ln_0^+\left(\frac{t_{i_k} \lam k}{R}\right)  + 1.
			\]
			Taking expectation of
			the above inequality,
			and using 
			the Jensen's inequality and the fact that $\ln x$ is a concave function, we then conclude that
			\begin{align*}
				\E[|{\cal C}_k|] &\le (\theta K +1) \E \left[ \ln_0^+\left(\frac{t_{i_k} \lam k}{R}\right) \right] +1 \le (\theta K +1) \ln_0^+\left(\frac{\E[t_{i_k}] \lam k}{R}\right) + 1\\
				&\le (\theta K +1) \ln_0^+\left(\frac{ 2\bar M^2 \lam^2 k}{R}\right) + 1,
			\end{align*}
			where the last inequality is
			due to Lemma \ref{lem:t1-2}.
			
			b) This statement follows from the same argument as in the proof of Theorem \ref{thm:main1}(b) except that Lemma \ref{lem:stoch-outer-1} is used in place of Lemma \ref{lem:stoch-outer}.
			{\tiny \qed}

			\section{Numerical experiments}\label{sec:num}
			
			In this section, we report the results of numerical experiments
			where we compare the performance of RSA
			and our two variants of SCPB
			on three stochastic programming problems, namely: a stochastic
			utility problem given in Section 4.2 of \cite{nemjudlannem09}
			and the two two-stage nonlinear stochastic programs
			considered in the numerical experiments of \cite{guiguesinexactsmd}.
			These three problems are of form
			\eqref{eq:ProbIntro}-\eqref{pbint2} with  $h$ 
			the indicator function of a convex
			compact set
			$X$ with diameter $D_X$. Therefore, the problems can be written as
			\begin{equation} \label{testpb}
				\min \{f(x):=\mathbb{E}[F(x,\xi)]: x \in X\}.
			\end{equation}
			The implementations are
			coded in MATLAB, using Mosek optimization library \cite{mosek}
			to generate stochastic oracles $F(x,\xi)$ 
			and $s(x,\xi)$, and run on a laptop with
			Intel i7, 1.80 GHz processor.
			For solving subproblem \eqref{def:xj}, we do not use Mosek but implement algorithms for projection onto $X$. In particular, we follow \cite{wang2013projection} to implement an exact algorithm for projection onto the unit simplex.
			
			\par {\textbf{Parameters for Robust Stochastic Approximation.}}
			Robust Stochastic Approximation, denoted by 
			E-SA (Euclidean Stochastic Approximation) in what follows, is described in Section 2.2
			of \cite{nemjudlannem09} (as explained in \cite{nemjudlannem09},
			in terms of Section 2.3 of \cite{nemjudlannem09},
			this is mirror descent robust SA with Euclidean setup).
			In the notation of \cite{nemjudlannem09}, for E-SA run for $N$ iterations, we output
			$\displaystyle \tilde x_1^N=\frac{1}{N} \sum_{i=1}^N x_i$
			(this is $\tilde x_i^N$
			given by \eqref{def:tx} with $i=1$
			and corresponds to the usual output
			of RSA) where $x_i$
			is computed at iteration $i$ taking the constant
			steps given in (2.23) of \cite{nemjudlannem09}
			by 
			\[
			\gamma_t=\frac{\theta D_X}{M \sqrt{N}}
			\]
			where $D_X$ is the diameter of the feasible set $X$
			in \eqref{testpb}.\footnote{Parameter $M$ is denoted
				by $M_*$ in \cite{nemjudlannem09}.}
			As in \cite{nemjudlannem09}, we take $\theta=0.1$
			which was calibrated in \cite{nemjudlannem09}
			using an instance of the stochastic 
			utility problem. For each problem, the value of $M$ is estimated as
			in \cite{nemjudlannem09} 
			taking the maximum of $\|s(\cdot,\cdot)\|$
			over 10,000 calls to the stochastic oracle at randomly
			generated feasible solutions.\\
			
			\begin{remark} 
				In  \cite{nemjudlannem09}, E-SA generates
				approximately
				$\log_2(N)$ candidate solutions 
				$\tilde x_i^N=\frac{1}{N-i+1}\sum_{k=i}^N x_k$
				with 
				$N-i+1 = \min[2^k,N]$, $k = 0, 1,\ldots,\log_2(N)$
				and an additional sample was used
				to estimate the objective at these candidate solutions in order to choose the best of
				these candidates.
				In \cite{nemjudlannem09}, the computational effort required by this postprocessing is not reflected in the
				experiments. However, we believe that
				for a fair comparison of 
				E-SA using this set of candidate solutions and SCPB, this computational
				effort should be taken into account
				and  without this additional computational bulk,
				SCPB is already faster than E-SA in our experiments.
			\end{remark}
			\vspace*{0.2cm}
			\par {\textbf{Parameters for SCPB1.}}
			SCPB1 uses
			parameters
			$\theta$, $\tau$, 
			$R$, and $\lambda$  given 
			by
			$$
			\theta = \frac{C}{K},
			\;\tau=\frac{\theta K}{\theta K +1},\;
			R=\frac{D_X}{M},\;
			\lambda=\beta_1 \frac{\sqrt{C} D_X}{M \sqrt{K}}
			$$
			where constant
			$C=9$ and
			constant $\beta_1$
			was calibrated with
			the stochastic utility problem,
			see below.
			We take $\beta_1=10$ in
			all our experiments.
			Constant $M$ was estimated
			as for RSA 
			taking the maximum of $\|s(\cdot,\cdot)\|$
			over 10,000 calls to the stochastic oracle at randomly
			generated feasible solutions.\\
			
			\par {\textbf{Parameters for SCPB2.}} 
			SCPB2 uses
			parameters
			$\theta$, $\tau$, 
			$R$, and $\lambda$ given by
			$$
			\theta = \frac{C}{K},
			\;\tau=\frac{\theta K}{\theta K +1},\;
			R=D_X^2,\;
			\lambda=\beta_2 \frac{\sqrt{C} D_X}{M \sqrt{K}}
			$$
			where constant
			$C=9$ and
			constant $\beta_2$
			was calibrated with
			the stochastic utility problem,
			see below.
			We take $\beta_2=10$ in
			all our experiments.
			Constant $M$ was again  estimated
			as for RSA 
			taking the maximum of $\|s(\cdot,\cdot)\|$
			over 10,000 calls to the stochastic oracle at randomly
			generated feasible solutions.\\
			
			\par {\textbf{Notation in the tables.}} In what
			follows, we denote by
			\begin{itemize}
				\item $n$ the design dimension of an instance;
				\item $N$ the sample size used to run the methods; this is also the number of iterations of E-SA;
				\item $K$ the number of SCPB outer iterations;
				\item {\tt{Obj}} the empirical mean 
				\begin{equation}\label{defempmean}
					{\hat F}_T(x):=\frac{1}{T}\sum_{i=1}^T F(x,\xi_i)
				\end{equation}
				of $F$ at $x$ based on a sample 
				$\xi_1,\ldots,\xi_T$
				of $\xi$ of size $T$, which provides  an estimation of $f(x)$. The 
				empirical means are computed
				with $x$ being
				the final iterate output
				by the algorithm and $T=10^4$;
				\item {\tt{CPU}} the CPU time in seconds.
			\end{itemize}
			
			\subsection{A stochastic utility problem}\label{subsec:utility}
			
			Our first set of experiments was carried out
			with the stochastic utility problem given by
			\[
			\min_{x \in X} \mathbb{E}\left[ 
			\phi\left(\displaystyle \sum_{i=1}^n \left(\frac{i}{n}+\xi_i \right)x_i \right)
			\right]
			\]
			where 
			\begin{equation}\label{simplexeprob}
				X=\left\{x \in \mathbb{R}^n :  \sum_{i=1}^n x_i = 1, \, x \ge 0 \right\},
			\end{equation}
			$\xi_i \sim \mathcal{N}(0,1)$ are independent and
			$\phi(t) =\max({v_1 + s_1 t,\ldots,v_m+s_m t)}$
			is piecewise convex
			with 10
			breakpoints, all
			located on $[0,1]$\footnote{Although the same problem
				class and a similar procedure to build 
				$\phi$ was used in the experiments
				of Section 4.2 in \cite{nemjudlannem09},
				we could not find in this reference
				the precise choices of $v_k, s_k$
				and the
				optimal values of our instances
				differ from the optimal values
				of the instances in \cite{nemjudlannem09}.
				Also, contrary to
				\cite{nemjudlannem09},
				we use the same function
				$\phi$ for all instances.
				The instances differ for the
				problem dimension $n$.
			}.
			
			{\textbf{Calibration of $\beta_1$ and $\beta_2$.}}
			We run SCPB1 and SCPB2
			with 7 values of 
			$\beta_1$ and $\beta_2$
			on four instances
			of the stochastic utility
			problem for $K=1000$
			outer iterations (i.e., cycles)
			and $n=500$, $n=1000$, $n=2000$, and
			$n=5000$. For this experiment, the 
			values of $\beta_1$, $\beta_2$,
			the corresponding values
			of stepsize $\lambda$, and the
			optimal values
			computed by SCPB1 and 
			SCPB2 are reported
			in Table \ref{table_choicebeta}.
			We found out  that $\beta_1=10$
			slightly
			outperforms other choices of $\beta_1$ for SCPB1. 
			Surprisingly, SCPB2 was not affected by changes in $\beta_2$
			and all tested values allowed us to obtain with similar CPU times
			a good approximate optimal value.
			This value $\beta_1=10$ and the same value $\beta_2=10$
			will be chosen for all runs
			of SCPB and all the problem
			instances (the stepsizes in 
			\cite{nemjudlannem09}
			were calibrated similarly, on the basis of an instance of the stochastic utility problem).
			\if{
				\begin{table}[H]
					\centering
					\begin{tabular}{|c|c|c|c|c|c|c|c|c|}
						\hline
						$\beta_1, \beta_2$  & 0.01 &0.1&1&10&50&150&1000\\
						\hline
						$\lambda, n=500$& $1.70\small{\times}10^{-5}$&$1.70\small{\times}10^{-4}$&0.0017 &0.017&0.09&0.26 &1.7\\
						\hline
						{\tt{Obj}}$_1$, $n=500$&14.2795&
						10.6439&10.1819&10.1811&10.1811&10.1838&10.1937\\
						\hline
						{\tt{Obj}}$_2$, $n=500$&10.1937&10.1937&10.1937&10.1937&10.1937&10.1937&10.1937\\
						\hline
						\hline
						$\lambda, n=10^3$& $1.17\small{\times}10^{-5}$&$1.17\small{\times}10^{-4}$&0.0012 &0.012&0.0585&0.18 &1.17\\
						\hline
						{\tt{Obj}}$_1$, $n=10^3$&14.6307&
						11.1325&10.0510&10.0504&10.0509&10.0523&10.0710\\
						\hline
						{\tt{Obj}}$_2$, $n=10^3$&10.0710&10.0710&10.0710&10.0710&10.0710&10.0710&10.0710\\
						\hline
						\hline
						$\lambda, n=2\small{\times}10^3$& $8.36\small{\times}10^{-6}$&$8.36\small{\times}10^{-5}$&
						$8.36\small{\times}10^{-4}$ &0.0084&0.0418&0.1255&0.8364\\
						\hline
						{\tt{Obj}}$_1$, $n=2\small{\times}10^3$&13.7451&
						11.0836&10.0365&10.0363&10.0364&10.0375&10.0613\\
						\hline
						{\tt{Obj}}$_2$, $n=2\small{\times}10^3$&10.0613&10.0613&10.0613&10.0613&10.0613&10.0613&10.0613\\
						\hline
						\hline
						$\lambda,n=5\small{\times}10^3$& $7.93\small{\times}10^{-6}$&$7.93\small{\times}10^{-5}$&
						$7.93\small{\times}10^{-4}$ &0.0079&0.0397&0.119&0.793\\
						\hline
						{\tt{Obj}}$_1$, $n=5\small{\times}10^3$&14.0830&11.3370
						&10.0228&10.0228&10.0231&10.0237&10.0540\\
						\hline
						{\tt{Obj}}$_2$, $n=5\small{\times}10^3$&10.0540 &10.0540&10.0540 &10.0540&10.0540&10.0540&10.0540\\
						\hline
						\hline
					\end{tabular}
					\caption{Selecting parameters $\beta_1$ and $\beta_2$
						of SCPB1 and SCPB2. Framework: SCPB, $K=1000$ outer iterations, four instances of the stochastic utility problem with $n=500$, $1000$, $2000$, and $5000$. {\tt{Obj}}$_1$
						(resp., {\tt{Obj}}$_2$)
						is the approximate
						optimal value with
						SCPB1 (resp., SCPB2).}
					\label{table_choicebeta}
				\end{table}
			}\fi

			\begin{table}[H]
				\centering
				\begin{tabular}{|c|c|c|c|c|c|c|c|c|}
					\hline
					$\beta_1, \beta_2$  & 0.01 &0.1&1&10&50&150&1000\\
					\hline
					$\lambda (n=500)$& $1.70\small{\times}10^{-5}$&$1.70\small{\times}10^{-4}$&0.0017 &0.017&0.09&0.26 &1.7\\
					\hline
					{\tt{Obj}}$_1$ &14.2795&
					10.6439&10.1819&10.1811&10.1811&10.1838&10.1937\\
					\hline
					{\tt{Obj}}$_2$ &10.1937&10.1937&10.1937&10.1937&10.1937&10.1937&10.1937\\
					\hline
					\hline
					$\lambda (n=10^3)$& $1.17\small{\times}10^{-5}$&$1.17\small{\times}10^{-4}$&0.0012 &0.012&0.0585&0.18 &1.17\\
					\hline
					{\tt{Obj}}$_1$ &14.6307&
					11.1325&10.0510&10.0504&10.0509&10.0523&10.0710\\
					\hline
					{\tt{Obj}}$_2$ &10.0710&10.0710&10.0710&10.0710&10.0710&10.0710&10.0710\\
					\hline
					\hline
					$\lambda (n=2\small{\times}10^3)$& $8.36\small{\times}10^{-6}$&$8.36\small{\times}10^{-5}$&
					$8.36\small{\times}10^{-4}$ &0.0084&0.0418&0.1255&0.8364\\
					\hline
					{\tt{Obj}}$_1$ &13.7451&
					11.0836&10.0365&10.0363&10.0364&10.0375&10.0613\\
					\hline
					{\tt{Obj}}$_2$ &10.0613&10.0613&10.0613&10.0613&10.0613&10.0613&10.0613\\
					\hline
					\hline
					$\lambda (n=5\small{\times}10^3)$& $7.93\small{\times}10^{-6}$&$7.93\small{\times}10^{-5}$&
					$7.93\small{\times}10^{-4}$ &0.0079&0.0397&0.119&0.793\\
					\hline
					{\tt{Obj}}$_1$ &14.0830&11.3370
					&10.0228&10.0228&10.0231&10.0237&10.0540\\
					\hline
					{\tt{Obj}}$_2$ &10.0540 &10.0540&10.0540 &10.0540&10.0540&10.0540&10.0540\\
					\hline
				\end{tabular}
				\caption{Selecting parameters $\beta_1$ and $\beta_2$
					of SCPB1 and SCPB2. Framework: SCPB, $K=1000$ outer iterations, four instances of the stochastic utility problem with $n=500$, $1000$, $2000$, and $5000$. {\tt{Obj}}$_1$
					(resp., {\tt{Obj}}$_2$)
					is the approximate
					optimal value with
					SCPB1 (resp., SCPB2).}
				\label{table_choicebeta}
			\end{table}

			We now run E-SA, SCPB1, and SCPB2 on three
			instances $L_1$, $L_2$, and $L_3$ of the stochastic utility 
			problem
			with  $n=2000$, $5000$, and $10^5$ respectively.
			For SCPB1 and SCPB2, we used $K=1000$ outer iterations.
			The results are reported in Table \ref{tabfirstpb:cpuobj}.
			Several comments are now in order for the results reported in this table.
			\begin{itemize}
				\item  For SCPB, approximate solutions can only be computed at the end of every cycle. Namely, at the end of $L$-th cycle
				at iteration $j_L$ we can compute
				the approximate solution
				$$ \frac1{\lceil L/2 \rceil}\sum_{\ell=\lfloor L/2 \rfloor + 1}^L \hat y_{\ell}
				=
				\frac1{\lceil L/2 \rceil}\sum_{\ell=\lfloor L/2 \rfloor + 1}^L y_{j_\ell}.
				$$
				For a given value of $N$ in Table 
				\ref{tabfirstpb:cpuobj}, the approximate
				objective value {\tt{Obj}} we report
				for E-SA is the empirical
				mean of $F(x,\xi)$
				at the approximate solution 
				$\frac{1}{N} \sum_{i=1}^N x_i$
				(where $x_i$'s are computed along iterations of E-SA)
				while for SCPB the approximate
				value {\tt{Obj}} is the empirical
				mean of $F(x,\xi)$
				at the approximate solution 
				$\frac1{\lceil L(N)/2 \rceil}\sum_{\ell=\lfloor L(N)/2 \rfloor + 1}^{L(N)} \hat y_{\ell}$
				where 
				$$
				L(N)=\min\{k:j_k \geq N\}
				$$
				(since a cycle may not end at iteration $N$).
				\item Each iteration of
				E-SA and SCPB takes a similar
				amount of time (in both cases we evaluate
				an inexact prox-operator at some
				point) and therefore for a given
				sample size $N$ the CPU time 
				for E-SA and SCPB
				is similar.
				\item For all instances,
				SCPB computes a good approximate optimal value much quicker than
				E-SA and the decrease in the
				objective function value is
				much slower with E-SA.
				We also refer to
				Table \ref{tabpb:cpuobjr}
				which reports for $L_1$ and $L_2$
				the distance between
				SCPB approximate optimal value 
				and
				E-SA approximate value
				as a percentage of
				SCPB decrease in the objective
				for several sample sizes. 
				This table confirms the slower convergence of E-SA in these instances.
			\end{itemize}
			
			\if{
				\begin{table}[H]
					\centering
					\begin{tabular}{|c|c|cc|cc|cc|cc|cc|}
						\hline
						\multicolumn{2}{|c|}{-} & 
						\multicolumn{2}{|c|}{$L_1: n=500$}&
						\multicolumn{2}{|c|}{$L_2: n=1000$}&
						\multicolumn{2}{|c|}{$L_3: n=2000$}&
						\multicolumn{2}{|c|}{$L_4: n=5000$}&
						\multicolumn{2}{|c|}{{\color{blue}$L_5: n=10^4$}}\\
						\hline
						ALG. & N & {\tt{Obj}} &{\tt{CPU}} &{\tt{Obj}} &{\tt{CPU}} &{\tt{Obj}} &{\tt{CPU}} &{\tt{Obj}} &{\tt{CPU}}&
						{\color{blue}{\tt{Obj}}} &{\color{blue}{\tt{CPU}}}\\
						\hline
						\multirow[t]{10}{*}{E-SA} & 10&    14.6155 & 0.0012 &14.6113  & 0.109 &14.6449  & 0.001 &14.6892& 0.05 &{\color{blue}15.4}&{\color{blue}0.009}\\
						&50&    14.6074 & 0.0035 & 14.6039 & 0.11 & 14.6322 & 0.006 &14.6813 &0.07&{\color{blue}15.3}&{\color{blue}0.04}\\
						&100&    14.5982 & 0.0065 & 14.5950 & 0.12 & 14.6169 & 0.01  &14.6725 &0.1&{\color{blue}15.2}&{\color{blue}0.08}\\
						&200&    14.5814 & 0.0125 & 14.5786 &  0.13& 14.5880 & 0.03 & 14.6574&0.2&{\color{blue}15.2}&{\color{blue}0.17}\\
						&1000&    14.4682 & 0.0597 & 14.4651 & 0.2 & 14.3992 &0.1  & 14.5604&0.5&{\color{blue}15.0}&{\color{blue}0.8}\\
						&$10^4$&12.43  & 0.57  & 12.34 & 1.23 &12.9656  & 1.28 & 12.7410&3.7&{\color{blue}13.2}&{\color{blue}8.4}\\
						\hline
						\multirow[t]{10}{*}{SCPB1} & 10&12.3639 & 0.0019 & 12.8431& 0.002& 13.9539 & 0.003 & 13.7763 &0.008& {\color{blue}14.3}& {\color{blue}0.12}\\
						&50&11.4342&   0.0048 & 12.1323& 0.005 &13.6527 &0.01 & 13.4672 &0.02&{\color{blue}13.7}&{\color{blue}0.16}\\
						&100&10.9414& 0.0082 &  11.4249 & 0.01 &13.5986 & 0.02& 13.5346 &0.05&{\color{blue}12.8}&{\color{blue}0.22}\\
						&200&10.4741& 0.0145 &  10.7075 & 0.02 &13.5349 & 0.03& 13.4686 &0.08&{\color{blue}12.0}&{\color{blue}0.31}\\
						&1000&10.0824& 0.0653 & 10.0574 & 0.09 & 13.0370 & 0.2 & 12.8376 &0.5&{\color{blue}11.4}&{\color{blue}1.10}\\
						\hline
						\multirow[t]{10}{*}{SCPB2} & 10&12.5790&0.0015  & 11.2559 & 0.003 & 13.5968 & 0.002 & 13.7777 &0.01&{\color{blue}12.3}&{\color{blue}0.16}\\
						&50&10.1287& 0.0045 & 10.1430 & 0.01 & 12.9421 & 0.008 & 12.6959 &0.05&{\color{blue}11.7}&{\color{blue}0.7}\\
						&100&10.1180&0.0075  & 10.0799 &0.02  & 12.1317 & 0.02 & 11.7614 &0.09&{\color{blue}11.6}&{\color{blue}1.5}\\
						&200&10.0993& 0.0141 & 10.0656 & 0.04 & 11.3640 & 0.03 & 11.3698 &0.2&{\color{blue}11.5}&{\color{blue}3.4}\\
						&1000&10.0779& 0.0718 & 10.0569 & 0.14 & 11.5681 & 0.1 & 11.5572 &0.9&{\color{blue}11.4}&{\color{blue}25.4}\\
						\hline
					\end{tabular}
					\caption{E-SA versus two variants of SCPB on the stochastic utility problem run with
						$K=1000$ outer iterations.}
					\label{tabfirstpb:cpuobj}
				\end{table}
			}\fi

			\if{
				\begin{table}[H]
					\centering
					\begin{tabular}{|c|c|cc|cc|cc|cc|cc|}
						\hline
						\multicolumn{2}{|c|}{-} & 
						\multicolumn{2}{|c|}{$L_1: n=500$}&
						\multicolumn{2}{|c|}{$L_2: n=1000$}&
						\multicolumn{2}{|c|}{$L_3: n=2000$}&
						\multicolumn{2}{|c|}{$L_4: n=5000$}&
						\multicolumn{2}{|c|}{{\color{blue}$L_5: n=10^4$}}\\
						\hline
						ALG. & N & {\tt{Obj}} &{\tt{CPU}} &{\tt{Obj}} &{\tt{CPU}} &{\tt{Obj}} &{\tt{CPU}} &{\tt{Obj}} &{\tt{CPU}}&
						{\color{blue}{\tt{Obj}}} &{\color{blue}{\tt{CPU}}}\\
						\hline
						\multirow[t]{10}{*}{E-SA} & 10&    14.6155 & 0.0012 &14.6113  & 0.109 &14.6449  & 0.001 &14.6892& 0.05 &{\color{blue}15.4}&{\color{blue}0.009}\\
						&50&    14.6074 & 0.0035 & 14.6039 & 0.11 & 14.6322 & 0.006 &14.6813 &0.07&{\color{blue}15.3}&{\color{blue}0.04}\\
						&100&    14.5982 & 0.0065 & 14.5950 & 0.12 & 14.6169 & 0.01  &14.6725 &0.1&{\color{blue}15.2}&{\color{blue}0.08}\\
						&200&    14.5814 & 0.0125 & 14.5786 &  0.13& 14.5880 & 0.03 & 14.6574&0.2&{\color{blue}15.2}&{\color{blue}0.17}\\
						&1000&    14.4682 & 0.0597 & 14.4651 & 0.2 & 14.3992 &0.1  & 14.5604&0.5&{\color{blue}15.0}&{\color{blue}0.8}\\
						&$10^4$&12.43  & 0.57  & 12.34 & 1.23 &12.9656  & 1.28 & 12.7410&3.7&{\color{blue}13.2}&{\color{blue}8.4}\\
						\hline
						\multirow[t]{10}{*}{SCPB1} & 10&12.3639 & 0.0019 & 12.8431& 0.002& 13.9539 & 0.003 & 13.7763 &0.008& {\color{blue}14.3}& {\color{blue}0.12}\\
						&50&11.4342&   0.0048 & 12.1323& 0.005 &13.6527 &0.01 & 13.4672 &0.02&{\color{blue}13.7}&{\color{blue}0.16}\\
						&100&10.9414& 0.0082 &  11.4249 & 0.01 &13.5986 & 0.02& 13.5346 &0.05&{\color{blue}12.8}&{\color{blue}0.22}\\
						&200&10.4741& 0.0145 &  10.7075 & 0.02 &13.5349 & 0.03& 13.4686 &0.08&{\color{blue}12.0}&{\color{blue}0.31}\\
						&1000&10.0824& 0.0653 & 10.0574 & 0.09 & 13.0370 & 0.2 & 12.8376 &0.5&{\color{blue}11.4}&{\color{blue}1.10}\\
						\hline
						\multirow[t]{10}{*}{SCPB2} & 10&12.5790&0.0015  & 11.2559 & 0.003 & 13.5968 & 0.002 & 13.7777 &0.01&{\color{blue}12.3}&{\color{blue}0.16}\\
						&50&10.1287& 0.0045 & 10.1430 & 0.01 & 12.9421 & 0.008 & 12.6959 &0.05&{\color{blue}11.7}&{\color{blue}0.7}\\
						&100&10.1180&0.0075  & 10.0799 &0.02  & 12.1317 & 0.02 & 11.7614 &0.09&{\color{blue}11.6}&{\color{blue}1.5}\\
						&200&10.0993& 0.0141 & 10.0656 & 0.04 & 11.3640 & 0.03 & 11.3698 &0.2&{\color{blue}11.5}&{\color{blue}3.4}\\
						&1000&10.0779& 0.0718 & 10.0569 & 0.14 & 11.5681 & 0.1 & 11.5572 &0.9&{\color{blue}11.4}&{\color{blue}25.4}\\
						\hline
					\end{tabular}
					\caption{E-SA versus two variants of SCPB on the stochastic utility problem run with
						$K=1000$ outer iterations.}
					\label{tabfirstpb:cpuobj}
				\end{table}
			}\fi

			\begin{table}[H]
				\centering
				\begin{tabular}{|c|c|cc|cc|cc|}
					\hline
					\multicolumn{2}{|c|}{-} &
					\multicolumn{2}{|c|}{$L_1: n=2000$}&
					\multicolumn{2}{|c|}{$L_2: n=5000$}&
					\multicolumn{2}{|c|}{ $L_3: n=10^5$}\\
					\hline
					ALG. & N  &{\tt{Obj}} &{\tt{CPU}} &{\tt{Obj}} &{\tt{CPU}}&
					{\tt{Obj}} & {\tt{CPU}}\\
					\hline
					\multirow[t]{10}{*}{E-SA} & 10& 14.6449  & 0.001 &14.6892& 0.05 &15.4& 0.05\\
					&50&   14.6322 & 0.006 &14.6813 &0.07&14.7& 0.35\\
					&100&   14.6169 & 0.01  &14.6725 &0.1&14.6&0.74\\
					&200&   14.5880 & 0.03 & 14.6574&0.2&14.6& 1.44\\
					&1000&  14.3992 &0.1  & 14.5604&0.5&14.3& 17.2\\
					&$10^4$&12.9656  & 1.28 & 12.7410&3.7&14.2& 80.3\\
					&$10^5$& -  & - & -&-&13.2& 860.1\\
					\hline
					\multirow[t]{10}{*}{SCPB1} & 10& 13.9539 & 0.003 & 13.7763 &0.008& 14.7& 0.08\\
					&50&13.6527 &0.01 & 13.4672 &0.02&14.4& 0.39\\
					&100&13.5986 & 0.02& 13.5346 &0.05&14.2&0.9\\
					&200&13.5349 & 0.03& 13.4686 &0.08&14.3& 1.6\\
					&1000& 13.0370 & 0.2 & 12.8376 &1.6&14.2& 12.5\\
					&$10^4$& - & - & - & - &12.7& 72.3\\
					\hline
					\multirow[t]{10}{*}{SCPB2} &  10 & 13.5968 & 0.002 & 13.7777 &0.01&14.2& 0.06\\
					&50&12.9421 & 0.008 & 12.6959 &0.05&13.2& 0.7\\
					&100& 12.1317 & 0.02 & 11.7614 &0.09&12.2& 1.5\\
					&200& 11.3640 & 0.03 & 11.3698 &0.2&11.6& 3.4\\
					&1000& 11.5681 & 0.1 & 11.5572 &0.9&11.2& 25.4\\
					\hline
				\end{tabular}
				\caption{E-SA versus two variants of SCPB on the stochastic utility problem run with
					$K=1000$ outer iterations.}
				\label{tabfirstpb:cpuobj}
			\end{table}
			
			\subsection{A first two-stage stochastic program}\label{sec:sto1}
			
			Our second test problem is the
			nonlinear two-stage stochastic program 
			\begin{equation}\label{smdmodel11}
				\left\{ 
				\begin{array}{l}
					\min \;c^T x_1 + \mathbb{E}[\mathfrak{Q}(x_1,\xi)]\\
					x_1 \in \mathbb{R}^n : x_1 \geq 0, \sum_{i=1}^n x_1(i) = 1
				\end{array}
				\right.
			\end{equation}
			where the second stage recourse function is given by
			\begin{equation}\label{smdmodel12}
				\mathfrak{Q}(x_1 ,\xi)=\left\{ 
				\begin{array}{l}
					\displaystyle  \min_{x_2 \in \mathbb{R}^n} \;\frac{1}{2}\left( \begin{array}{c}x_1\\x_2\end{array} \right)^T \Big( \xi \xi^T + \gamma_0 I_{2 n} \Big)\left( \begin{array}{c}x_1\\x_2\end{array} \right) + \xi^T \left( \begin{array}{c}x_1\\x_2\end{array} \right)  \\
					s.t. \ \ \ x_2 \geq 0, \displaystyle \sum_{i=1}^n x_2(i) = 1.
				\end{array}
				\right.
			\end{equation}
			
			Problem \eqref{smdmodel11}-\eqref{smdmodel12} is 
			of form 
			\eqref{eq:ProbIntro}-\eqref{pbint2}
			where $F(x,\xi)=c^T x + \mathfrak{Q}(x,\xi)$
			with $\mathfrak{Q}$ given by \eqref{smdmodel12} 
			and where $h$ is the indicator function of
			set $X$ 
			where $X$  given by \eqref{simplexeprob}
			is the unit simplex.
			For problem \eqref{smdmodel11} we refer to 
			Lemma 2.1 in \cite{guiguessiopt2016}
			for the computation of stochastic subgradients
			$s(x,\xi)$. We take $\gamma_0=2$
			and
			consider a Gaussian random vector in $\mathbb{R}^{2n}$ for $\xi$.
			We consider two instances of  problem
			\eqref{smdmodel11}
			with $n=50$ and $n=100$.
			For each instance, the components of $\xi$ are independent with means and standard deviations 
			randomly generated in respectively intervals 
			$[5,25]$ and $[5,15]$.
			The components of $c$ are generated randomly in interval $[1,3]$.
			
			We run E-SA, SCPB1, and SCPB2 on our two
			instances $A_1$ and $A_2$
			with $n=50$ and  $n=100$, respectively.
			For SCPB1 and SCPB2, we used $K=1000$ outer iterations.
			The results are reported in Table \ref{tabsecondpb:cpuobj1}.
			The conclusions are similar to the experiments on
			the stochastic utility problem: SCPB computes a good approximate optimal value much quicker than
			E-SA and the decrease in the
			objective function value is
			much slower with E-SA.
			We again refer to
			Table \ref{tabpb:cpuobjr}
			which reports
			the distance between
			SCPB approximate optimal value 
			and
			E-SA approximate value
			as a percentage of
			SCPB decrease in the objective
			for several sample sizes. 
			This percentage is again above 90\%
			for almost all instances
			and sample sizes.
			
			\begin{table}[H]
				\centering
				\begin{tabular}{|c|c|cc|cc|}
					\hline
					\multicolumn{2}{|c|}{-} & 
					\multicolumn{2}{|c|}{$A_1: n=50$}&
					\multicolumn{2}{|c|}{$A_2: n=100$}\\
					\hline
					ALG. & $N$ & {\tt{Obj}} &{\tt{CPU}} &{\tt{Obj}} &{\tt{CPU}}\\
					\hline
					\multirow[t]{6}{*}{E-SA} & 10&24.3477  & 0.13&7.5134&0.5\\
					&50& 24.2378 &0.6 &7.5018&2.5\\
					&100& 24.0816  &1.2 &7.4868&5.0\\
					&200& 23.7947  & 3.0&7.4566&10.1\\
					&500& 22.9185  & 8.8&7.3790&25.9\\
					&1000& 21.5328  & 24.6&7.2587&  55.5\\ 
					&$2\small{\times}10^4$& 8.5482  & 377&5.1339&1282\\
					&$10^5$& 5.7358  & 1555.6&3.9193&6147\\
					\hline
					\multirow[t]{6}{*}{SCPB1} & 10& 11.5047 &0.2&3.0063&1.3\\
					&50& 9.2959 & 0.6&2.7269&3.2\\
					&100&7.2031&1.5 &2.4914&6.9\\
					&200& 6.4626  &2.9 &2.2899&13.0\\
					&500&  5.3700 & 7.5&2.0635&39.2\\
					&1000& 5.0582  & 15.1&1.9609&70.4\\
					\hline
					\multirow[t]{6}{*}{SCPB2} & 10& 8.6325 &0.15 &3.3113&0.6\\
					&50& 7.8378 & 0.7&2.2478&3.2\\
					&100& 7.8602  & 1.5&2.1929&6.4\\
					&200& 6.5839  & 3.0&2.2913&13.4\\
					&500& 6.0361  & 7.4&1.9974&33.7\\
					&1000& 6.1989  & 14.9&1.8058&65.1\\
					\hline
				\end{tabular}
				\caption{E-SA versus two variants of SCPB on the 
					two-stage stochastic program \eqref{smdmodel11}-\eqref{smdmodel12}}
				\label{tabsecondpb:cpuobj1}
			\end{table}
			Table \ref{tabmlarge}
			reports
			the impact of overestimating 
			$M$ (taking $M$ 10 times the Monte Carlo estimation $\overline M_e$) and 
			underestimating 
			$M$
			(taking $M$ 10 times smaller than the Monte Carlo estimation $\overline M_e$).
			In this experiment, SCPB is essentially not affected by a
			bad estimation of $M$ while E-SA converge much slower when $M$
			is overestimated. 
			Additionally, Table \ref{tabsslarge} 
			reports the computational results for
			all methods applied to
			a variant of 
			two-stage stochastic program \eqref{smdmodel11}-\eqref{smdmodel12}
			of size $n=50$
			where the feasible set of the first stage problem
			is replaced by the larger simplex set 
			$\{x_1 \in \mathbb{R}^n : x_1  \geq 0, \sum_{i=1}^n x_1(i)=100\}$.
			These results show 
			that  the SCPB variants are more efficient
			that E-SA
			on this specific instance. Also SCPB is not much affected
			by an overestimation of the diameter $D_X$.
			
			\begin{table}[H]
				\centering
				\begin{tabular}{|c|c|cc|cc|cc|}
					\cline{1-2}
					\multicolumn{2}{|c|}{-} & 
					\multicolumn{2}{|c|}{$M= \overline M_e$}&
					\multicolumn{2}{|c|}{$M=10\overline M_e$}&
					\multicolumn{2}{|c|}{$M=0.1 \overline M_e$}\\
					\hline
					ALG. & N & {\tt{Obj}} &{\tt{CPU}} &{\tt{Obj}} &{\tt{CPU}}&{\tt{Obj}} &{\tt{CPU}}\\
					\hline
					E-SA & 2000&  9.8&29.7&22.1&32.5&10.5&36.2\\
					\hline
					SCPB1 & 2000& 5.1 &36.3&5.2&33.8&5.1&42.2\\
					\hline
					SCPB2 & 2000&   5.5&31.2&4.6&33.5&5.6&37.2\\
					\hline
				\end{tabular}
				\caption{E-SA versus two variants of SCPB on the 
					two-stage stochastic program \eqref{smdmodel11}-\eqref{smdmodel12}
					with overestimated and underestimated values of $M$ on an instance
					with $n=50$.}
				\label{tabmlarge}
			\end{table}
			
			\begin{table}[H]
				\centering
				\begin{tabular}{|c|c|cc|cc|}
					\cline{1-2}
					\multicolumn{2}{|c|}{-} & 
					\multicolumn{2}{|c|}{$D= \overline D$}&
					\multicolumn{2}{|c|}{$D=5 \overline D$}\\
					\hline
					ALG. & N & {\tt{Obj}} &{\tt{CPU}} &{\tt{Obj}} &{\tt{CPU}}\\
					\hline
					\multirow[t]{2}{*}{E-SA} & 2000& $1.0338\small{\times}10^6$ &33.6&$1.0365\small{\times}10^6$  & 33.8\\
					& 10000& $8.894\small{\times}10^5$ &155.5 &$1.0055\small{\times}10^6$&160.8\\
					\hline
					SCPB1 & 2000& $2.894\small{\times}10^5$ &35.4&$3.029\small{\times}10^5$&35.6\\
					\hline
					SCPB2 & 2000& $2.889\small{\times}10^5$&34.2&$3.003\small{\times}10^5$&30.8\\
					\hline
				\end{tabular}
				\caption{E-SA versus SCPB1 and SCPB2 on a variant of the
					two-stage stochastic program \eqref{smdmodel11}-\eqref{smdmodel12}
					of size $n=50$
					where the simplex feasible set of the first stage problem
					is replaced by the larger feasible set 
					$\{x_1 \in \mathbb{R}^n : x_1  \geq 0, \sum_{i=1}^n x_1(i)=100\}$.
					Exact value $D=D_X=\overline D$ of the diameter used 
					to solve the first instance and overestimated value
					$D=5 \overline D=5D_X$ used to solve the second instance.
				}
				\label{tabsslarge}
			\end{table}

			Finally, we report the length of SCPB cycle along iterations
			in the left plot of Figure \ref{fig1clength}.
			A few comments are now in order on the length of the cycles
			with SCPB1 and SCPB2:
			\begin{itemize}
				\item  We observe that the length of the cycles is much larger with
				SCPB1. 
				\item For SCPB1, sequence $\{j_k\}$ (and therefore
				the length of the cycles) can be computed independently
				of sequence $\{x_k\}$, before running SCPB, once
				constant $R$ is known. It is worth
				mentioning that we have an analytic expression for $j_k$ as a function
				of $\lambda$, $R$, $\tau$, and $k$, namely
				$j_k-i_k=0$ if $R \geq \lambda k$ and
				$$
				j_k-i_k=\left \lceil{\frac{\log\left(\frac{R}{\lambda k}\right)}{\log\left(\tau\right)}}\right \rceil
				$$
				otherwise. Therefore,
				the cycle length with SCPB1
				is a piecewise constant 
				nondecreasing function of outer iteration $k$ and the cardinality of the set of consecutive iterations with constant cycle length increases along the cycles.
				\item For SCPB2, the length of the cycles is in general small 
				with small variability, with 
				an average cycle length of 2.2
				for the instance with $n=50$ 
				and an 
				average cycle
				length of 2.1 for the instance with $n=100$.
			\end{itemize}
			
			\begin{figure}[H]
				\centering
				\begin{tabular}{cc}
					\includegraphics[scale=0.6]{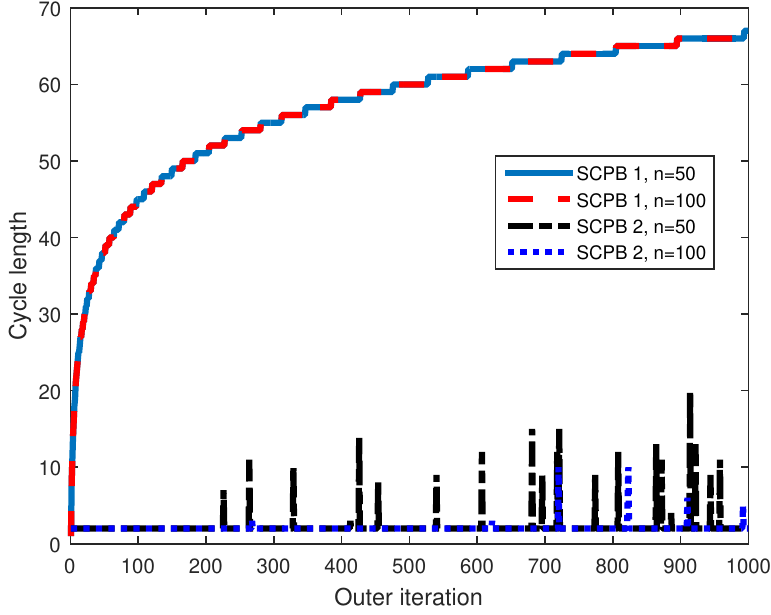}&
					\includegraphics[scale=0.6]{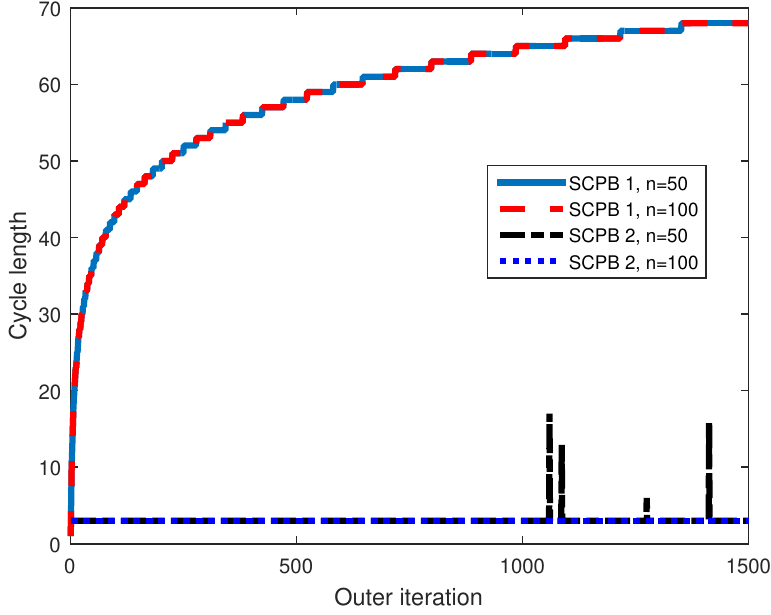}
				\end{tabular}
				\caption{Cycle length for SCPB1 and SCPB2 applied to  two-stage stochastic program 
					\eqref{smdmodel11}-\eqref{smdmodel12} (left figure) and two-stage stochastic program 
					\eqref{smdmodel21}-\eqref{smdmodel22} (right figure).}
				\label{fig1clength}
			\end{figure}
			
			\subsection{A second two-stage stochastic program}\label{subsec:sto2}
			
			Our third test problem is the nonlinear two-stage stochastic program 
			\begin{equation}\label{smdmodel21}
				\left\{
				\begin{array}{l}
					\min \; c^T x_1 + \mathbb{E}[\mathfrak{Q}(x_1,\xi)]\\
					x_1 \in \mathbb{R}^n : \|x_1-x_0\|_2 \leq 100 
				\end{array}
				\right. 
			\end{equation}
			where cost-to-go function $\mathfrak{Q}(x_1,\xi)$ has nonlinear objective and constraint coupling functions and is given by
			\begin{equation}\label{smdmodel22}
				\mathfrak{Q}(x_1,\xi)=\left\{ 
				\begin{array}{l}
					\displaystyle  \min_{x_2 \in \mathbb{R}^n} \;\frac{1}{2}\left( \begin{array}{c}x_1\\x_2\end{array} \right)^T \Big( \xi \xi^T + \gamma_0 I_{2 n} \Big) \left( \begin{array}{c}x_1\\x_2\end{array} \right) + \xi^T \left( \begin{array}{c}x_1\\x_2\end{array} \right)  \\
					s.t. \ \ \ \|x_2-y_0\|_2^2 + \|x_1-x_0\|_2^2 - R^2 \leq 0.
				\end{array}
				\right.
			\end{equation}
			Problem \eqref{smdmodel21}-\eqref{smdmodel22} is of form 
			\eqref{eq:ProbIntro}-\eqref{pbint2}
			where $F(x,\xi)=c^T x + \mathfrak{Q}(x,\xi)$
			with $\mathfrak{Q}$ given by \eqref{smdmodel22} 
			and where $h$ is the indicator function of
			set 
			$$
			X=\{x \in \mathbb{R}^n : \|x-x_0\|_2 \leq 100\}.
			$$
			For problem \eqref{smdmodel21}, we again refer to 
			Lemma 2.1 in \cite{guiguessiopt2016}
			for the computation of stochastic subgradients
			$s(x,\xi)$.
			We take $\gamma_0=2$
			and 
			consider for $\xi$ a Gaussian random vector in $\mathbb{R}^{2n}$
			with the components of $\xi$  independent with means and standard deviations 
			randomly generated in respectively intervals 
			$[-5,5]$ and $[0,10]$.
			The components of $c$ are generated randomly in interval $[-1,1]$
			and we take $R=200$, $x_0(i)=10$
			and $y_0(i)=1$ for
			$i=1,\ldots,n$.
			
			We run E-SA, SCPB1, and SCPB2 on two
			instances $B_1$ and $B_2$
			with $n=50$ and  $n=100$, respectively.
			For SCPB1 and SCPB2, we used $K=1500$ outer iterations.
			The results are reported in Tables \ref{tabthirdpb:cpuobj}
			and \ref{tabpb:cpuobjr}.
			The conclusions are similar to the experiments on
			the stochastic utility problem: it still takes much longer for E-SA to compute a solution with
			given accuracy. We also report in the right plot of Figure \ref{fig1clength}
			the evolution of the length of the cycles along outer iterations.
			The behavior of the length of these cycles is similar to what was observed 
			for the previous problem \eqref{smdmodel11}-\eqref{smdmodel12}.
			For SCPB2 the length of the cycles is still small on all iterations 
			and almost constant.
			
			\begin{table}[H]
				\centering
				\begin{tabular}{|c|c|cc|cc|}
					\hline
					\multicolumn{2}{|c|}{-} & 
					\multicolumn{2}{|c|}{$B_1: n=50$}&
					\multicolumn{2}{|c|}{$B_2: n=100$}\\
					\hline
					ALG. & N & {\tt{Obj}} &{\tt{CPU}} &{\tt{Obj}} &{\tt{CPU}}\\
					\hline
					\multirow[t]{6}{*}{E-SA} & 10& 15182 &0.2&18571&0.6\\
					&50&  15108& 0.9 &18497&4.0\\
					&100& 15017 &0.9& 18405  &7.4\\
					&200&  14836& 1.8 &18222&13.9\\
					&1000& 13481& 3.4 &16830&63.8\\
					&$10^5$& 99.8  &16.2 &177.5& 6275\\
					\hline
					\multirow[t]{6}{*}{SCPB1} & 10&  2981.5&0.2  &4914 &0.8\\
					&50&  761.2& 0.8  &1574&2.8\\
					&100& 288.1 &1.7&679&5.7\\
					&200& 64.8 &2.9&191&10.2\\
					&1000& -4.38 &14.3&-7.87&55.5\\
					\hline
					\multirow[t]{6}{*}{SCPB2} & 10&  1400.9& 0.13  &2766&0.5\\
					&50& 0.45& 0.5 &17.5&2.1\\
					&100& -4.38 & 1.0  &-7.88&4.1\\
					&200& -4.38 & 1.9  &-7.95&8.1\\
					&1000 & -4.38 &9.0&-7.95&42.2\\
					\hline
				\end{tabular}
				\caption{E-SA versus two variants of SCPB on the 
					two-stage stochastic program \eqref{smdmodel21}, \eqref{smdmodel22}}
				\label{tabthirdpb:cpuobj}
			\end{table}
			
			%

			\subsection{Summarizing performance indicators}
			
			The computational results reported in Subsections \ref{subsec:utility}-\ref{subsec:sto2} show that SCPB computes
			a good approximate solution quicker than E-SA.
			To properly quantify the speed-up over a fixed number of iterations $N$, we compute
			the quantity
			\begin{equation}\label{eq:percentage}
				100
				\frac{{\tt{Obj}}\mbox{(E-SA)}-{\tt{Obj}}(\mbox{SCPBi})}{
					{\hat F}_T(x_0) - {\tt{Obj}}\mbox{(SCPBi})}
			\end{equation}
			associated with SCPBi,
			where ${\hat F}_T(x_0)$
			is the empirical
			mean (see \eqref{defempmean}) of 
			$F$ with $T=10^4$ and $x$ equal to the initial point $x_0$, and
			${\tt{Obj}}\mbox{(E-SA)}$,
			${\tt{Obj}}\mbox{(SCPB1)}$, and ${\tt{Obj}}(\mbox{SCPB2})$
			are
			the empirical
			means of 
			$F$ with $T=10^4$ and $x$ equal to the final iterates output by  E-SA, SCPB1, and SCPB2, respectively.
			We see that after $N=1000$ iterations this percentage is 
			above $90\%$ for most instances,
			which clearly shows
			that
			both variants of SCPB
			are faster than E-SA.
			
			\begin{table}[H]
				\centering
				\begin{tabular}{|l|c|c|c|c|c|c|}
					\hline
					Sample size $N$ &10 &50& 100 &200 &1000\\
					\hline
					$L_1$, SCPB1 & 95.0 &95.2&94.1  &91.9 &82.8\\
					\hline
					$L_1$, SCPB2 & 96.6 &97.2  & 97.5 &97.2 &90.9\\
					\hline
					$L_2$, SCPB1 &92.1 &93.3&92.3  &91.5 &77.7\\
					\hline
					$L_2$, SCPB2 & 92.1&95.8  &96.8 &96.7&93.5\\
					\hline
					$A_1$, SCPB1 &99.5 &98.8& 98.1 &96.6 &85.1\\
					\hline
					$A_1$, SCPB2 & 99.6&99.0&98.0  &96.5 &84.2\\
					\hline
					$A_2$, SCPB1 &99.8 &99.6&99.3  &98.8 &95.0\\
					\hline
					$A_2$, SCPB2 &99.8 &99.6&99.3  &98.8 &95.4\\
					\hline
					$B_1$, SCPB1 &99.8 &99.4&98.8  & 97.6&88.7\\
					\hline
					$B_1$, SCPB2 &99.9 &99.4& 98.8 &97.6 &88.7\\
					\hline
					$B_2$, SCPB1 &99.9 &99.4&99.0  & 98.0&90.5\\
					\hline
					$B_2$, SCPB2 &99.9 &99.5& 99.0 &98.0 &90.5\\
					\hline
				\end{tabular}
				\caption{Percentages \eqref{eq:percentage} for SCPB1 and SCPB2}
			\label{tabpb:cpuobjr}
		\end{table}

		\section{Concluding remarks}\label{sec:remarks}
		

		This paper proposes two single-cut stochastic
		composite proximal bundle variants, called SCPB,
		for solving SCCO problem \eqref{eq:ProbIntro}-\eqref{pbint2} where at each iteration
		a problem of form 
		\eqref{eq:x-pre}  is solved.
		The two SCPB variants, which differ in the way their cycle lengths are determined,
		are analyzed in
		Sections \ref{sec:SCPB1} and \ref{sec:SCPB2}, respectively.
		More specifically,
		it is shown that both variants of SCPB with properly chosen parameters have optimal iteration complexity (up to a logarithmic term) for finding an $\varepsilon$-solution of \eqref{eq:ProbIntro} for a large range of prox stepsizes.
		Practical variants of SCPB 
		which keep their cycle lengths bounded  are also proposed
		and numerical experiments
		demonstrating their excellent performance against the RSA method of \cite{nemjudlannem09} on the
		instances considered in this paper are also reported.
		
		{\bf Comparison with other methods:}
		First, we have shown in Subsection~\ref{subsec:relation} that RSA is a special case of SCPB1  which performs only one iteration per cycle.
		Second, it is worth noting that SCPB has a slight similarity with the stochastic dual averaging (SDA) method discussed in 
		\cite{nesterov2009primal,JMLR:v11:xiao10a} since both methods explore the idea of aggregating cuts into a single one.
		However, there are essential differences between the two methods, namely:
		1) while SCPB updates the prox-centers whenever a serious iteration occurs,
		SDA uses a fixed prox-center, and hence only performs null iterations; 
		and
		2) SDA uses variable stepsizes which have to grow sufficiently large, while SCPB uses constant prox stepsizes.
		In summary, from the viewpoint of this paper, SDA is closest to the special case of
		SCPB 
		with a single cycle and
		a sufficiently large prox stepsize;  the difference between  the latter two methods is that
		SDA allows
		the prox stepsizes
		within its single cycle
		to gradually become sufficiently
		large.
		
		
		In summary,
		while RSA (resp., SDA) performs only serious (resp., null) iterations,
		SCPB performs a balanced mix of serious and null iterations.
		Hence,  it is reasonable to conclude that SCPB lies between RSA and SDA.


		\par {\textbf{Extensions.}}	We finally discuss some possible extensions of our analysis in this paper. A first question
		is how to extend SCPB  and the corresponding
		complexity analysis if
		instead of Assumption \ref{assump:basic} 
		(A3) we use the assumption that
		for every $u,v \in \dom h$, we have
		\[
		\|f'(u)-f'(v)\| \le 2M + L \|u-v\|,
		\]	
		which is called a uniform $(M,L)$-condition in \cite{liang2021unified}.
		A second natural question is how
		to extend SCPB and its complexity
		analysis when either the prox stepsize $\lam$ and parameter $\theta$ are allowed to change with the iteration count $k$.
		Recalling that the prox stepsize is the only ingredient of the second variant of SCPB that depends
		on an estimate $M$ of $\bar M$,
		a third natural question is
		whether it is possible to develop a SCPB variant which adaptively chooses a (variable)
		prox stepsize without the need
		of knowing $M$.
		A fourth question is whether it is possible to establish global convergence rate guarantees for proximal bundle methods based on
		two-cut or multiple-cut bundle models instead of the
		single-cut bundle models 
		considered in this paper.
		Finally, SCPB
		is able to
		solve two-stage convex stochastic programs with continuous distributions, under the assumption that the second-stage subproblems can be exactly solved (e.g., see Subsections~\ref{sec:sto1} and \ref{subsec:sto2}). It would be  interesting to extend it to the setting
		of {\it multistage} stochastic convex problems with continuous
		distributions.

		\bibliographystyle{plain}
		\bibliography{Proxacc_ref}

				\end{document}